\definecolor{daRemarkgreen}{rgb}{0,0.45,0}
\newcommand{\cd}[2][]{\vcenter{\hbox{\xymatrix#1{#2}}}}
\newcommand{\cat}[1]{\mathbf{#1}}
\def\matrixobject@{%
   \edef \next@{={\DirectionfromtheDirection@ }}%
   \expandafter \toks@ \next@ \plainxy@
   \let\xy@@ix@=\xyq@@toksix@
   \xyFN@ \OBJECT@}
\let\xy@entry@@norm=\entry@@norm
\def\entry@@norm@patched{%
   \let\object@=\matrixobject@
   \xy@entry@@norm }
\theoremstyle{definition}
\numberwithin{equation}{section}
\theoremstyle{plain}
\newtheorem{Theorem}{Theorem}[section]
\newtheorem{ConjectureF}[Theorem]{Faithfulness Conjecture}
\newtheorem{Corollary}[Theorem]{Corollary}
\newtheorem{Proposition}[Theorem]{Proposition}
\newtheorem{Lemma}[Theorem]{Lemma}
\theoremstyle{definition}
\newtheorem{Definition}[Theorem]{Definition}
\newtheorem{Examples}[Theorem]{Examples}
\newtheorem{Example}[Theorem]{Example}
\newtheorem{Remark}[Theorem]{Remark}
\newtheorem{Notation}[Theorem]{Notation}
\renewcommand{\phi}{\varphi}
\newcommand{\A}{{\mathcal A}}
\newcommand{\B}{{\mathcal B}}
\newcommand{\C}{{\mathcal C}}
\newcommand{\D}{{\mathcal D}}
\newcommand{\E}{{\mathcal E}}
\newcommand{\G}{{\mathcal G}}
\newcommand{\I}{{\mathcal I}}
\newcommand{\V}{{\mathcal V}}
\newcommand{\atwo}{{\mathbf 2}}
\newcommand{\f}[1]{\mathbb #1}
\newcommand{\Cat}{{\cat{Cat}}}
\newcommand{\CAT}{{\cat{CAT}}}
\newcommand{\Set}{{\cat{Set}}}
\newcommand{\Mod}{\textnormal{Mod}}
\newcommand{\Alg}{\textnormal{Alg}}
\newcommand{\GTh}{{\Theta_{0}\textnormal{-Th}}}
\newcommand{\Ainj}{\ensuremath{{\bbi}\textnormal{nj}\xspace}}
\newcommand{\cc}{\ensuremath{\mathcal C}\xspace}
\newcommand{\bbi}{\ensuremath{\mathbb I}\xspace}
\newcommand{\sig}{\cat{Sig}_\A(\E)}
\renewcommand{\th}{\cat{Th}_\A(\E)}
\newcommand{\nerv}{\cat{Mnd}_\A(\E)}
\newcommand{\mnd}{\cat{Mnd}(\E)}
\begin{document}
\leftmargini=2em \title{Iterated algebraic injectivity and the faithfulness conjecture}
\author{John Bourke}
\address{Department of Mathematics and Statistics, Masaryk University, Kotl\'a\v rsk\'a 2, Brno 61137, Czech Republic}
\email{bourkej@math.muni.cz}
\subjclass[2000]{Primary: 18D05, 18C10, 55U35}
\date{\today}
\maketitle

\maketitle
\begin{abstract}
Algebraic injectivity was introduced to capture homotopical structures like algebraic Kan complexes.  But at a much simpler level, it allows one to describe sets with operations subject to no equations.  If one wishes to add equations (or operations of greater complexity) then it is natural to consider iterated algebraic injectives, which we introduce and study in the present paper.

Our main application concerns Grothendieck's weak $\omega$-groupoids, introduced in Pursuing Stacks, and the closely related definition of weak $\omega$-category due to Maltsiniotis.  Using $\omega$ iterations we describe these as iterated algebraic injectives and, via this correspondence, prove the faithfulness conjecture of Maltsiniotis.  Through work of Ara, this implies a tight correspondence between the weak $\omega$-categories of Maltsiniotis and those of Batanin/Leinster.
\end{abstract}

\section{Introduction}

The categorical notion of injectivity captures interesting structures in many areas --- for instance, injective modules in algebra and Kan complexes in homotopy theory.  \emph{Algebraic injectivity} was introduced \cite{Garner2011Understanding,Nikolaus2011Algebraic} with the homotopical examples in mind, but even in the simplest settings it has much to say.  

For instance, whilst all sets are injective with respect to monomorphisms, an algebraic injective with respect to the inclusion $j:2 \hookrightarrow 3$ is a set $X$ equipped with a binary operation $m$, or magma, as depicted below.
\begin{equation*}
\cd{
2 \ar[d]_{j} \ar[r]^{(a,b)} & X \\
3 \ar[ur]_{(a,b,m(a,b))}
}
\end{equation*}

Now iterating injectivity is a futile affair: injectives in injectives are injectives.  On the other hand, in the present paper we will show that \emph{iterating algebraic injectivity} produces interesting structures.  For a simple example, all universal algebraic structures are algebraic injectives in algebraic injectives (a.k.a.~\emph{$2$-injectives}) in $\Set$ --- see Example~\ref{example:assmagma} below.

The primary case of interest here concerns the cellular globular theories of \cite{Malts2010} used to define Grothendieck weak $\omega$-groupoids \cite{Grothendieck1983Pursuing}.  In this setting, we will show that models of cellular globular theories can be identified as certain $\omega$-injectives --- iterated algebraic injectives using $\omega$ iterations.  We will use this identification to prove the \emph{faithfulness conjecture} of Maltsiniotis \cite{Malts2010}, which we now describe in a little more detail.

Grothendieck's weak $\omega$-groupoids \cite{Grothendieck1983Pursuing} and the weak $\omega$-categories introduced by Maltsiniotis in \cite{Malts2010} are globular sets with additional structure, expressed in terms of their being models of certain \emph{globular theories}.  In order that the models of a globular theory $\mathbb T$ are equipped with the operations expected in a higher categorical structure one imposes contractibility conditions upon $\mathbb T$.  To capture weakness of these operations one requires that $\mathbb T$ is \emph{cellular} --- this means that it can be constructed as a colimit of a chain 
\begin{equation*}
\xymatrix{
\Theta_{0}=\mathbb T_{0} \ar[r] & \mathbb T_{1} \ar[r] & \cdots \ar[r] & \mathbb T_{n} \ar[r]^{J^{n}_{n+1}} & \mathbb T_{n+1} \ar[r]^{J^{n+1}_{\omega}} & \mathbb T
}
\end{equation*}
in which $\Theta_0$ is the initial globular theory, and in which each $J^{n}_{n+1}:\mathbb T_{n} \to \mathbb T_{n+1}$ is obtained by \emph{freely adjoining fillers} --- see Section~\ref{sect:cellular}.  In \cite{Malts2010} Maltsiniotis conjectured that each functor $J^{n}_{m}:\mathbb T_n \to \mathbb T_m$ for $n < m$ defining a cellular globular theory is faithful.  Assuming this conjecture Ara \cite{Ara2010Thesis} established a sharp correspondence between the weak $\omega$-categories introduced by Maltsiniotis and those of Batanin/Leinster \cite{Batanin1998Monoidal,Leinster2002A-survey}.  However the faithfulness conjecture, on which the correspondence depends, was left unproven.  Using the framework of iterated algebraic injectivity, we prove it in Theorem~\ref{thm:faithfulness}.

Let us now describe the structure of the paper.  In Section~\ref{sect:globular} we describe some background on globular sets, globular theories and the faithfulness conjecture.  In Section~\ref{sect:iterated} we recall algebraic injectivity, introduce iterated algebraic injectivity and the important class of $(\A,\B)$-iterated algebraic injectives, which are parametrised by a set of objects $\A$ and family of maps $\B$ in the base category.  In Theorem~\ref{thm:main} we show that, for a suitable choice of $\A$ and $\B$, these capture the models of cellular globular theories.  In Section~\ref{sect:free} we abstract results of Nikolaus \cite{Nikolaus2011Algebraic} relating to the construction of free algebraic injectives, and extend them to our iterated setting.  Using these results, we prove the faithfulness conjecture in Section~\ref{sect:faithfulness}.  In Section~\ref{sect:nervous}, using the framework of \cite{Bourke2018Monads}, we generalise some of the results in Section~\ref{sect:iterated} away from the globular setting --- in particular, we show that in general $(\A,\B)$-iterated algebraic injectives are the categories of algebras for cellular monads/theories with respect to a class of maps determined by $\A$ and $\B$.   

%

\section*{Acknowledgements}
The main results of this paper were presented at CT2016 in Halifax \cite{BourkeCT2016}.  I am grateful to the organisers for inviting me to give a talk, and for the very enjoyable conference.  I am also grateful to Dimitri Ara and Georges Maltsiniotis for encouraging me to publish the results described herein.  I acknowledge the support of the Grant Agency of the Czech Republic under the grant 19-00902S.


\section{Background on globular theories}\label{sect:globular}

\subsection{Globular sets}

The \emph{globe category} $\mathbb G$ is freely generated by the
    graph$$\cd{
0 \ar@<-5pt>[r]_{\tau_{1}} \ar@<5pt>[r]^{\sigma_{1}} & 1 \ar@<-5pt>[r]_{\tau_{2}} \ar@<5pt>[r]^{\sigma_{2}} & \cdots \ar@<-5pt>[r]_{\tau_{n-1}} \ar@<5pt>[r]^{\sigma_{n-1}} & n-1 \ar@<-5pt>[r]_{\tau_{n}} \ar@<5pt>[r]^{\sigma_{n}} & n \cdots}$$
subject to the relations $\sigma_{n} \circ \sigma_{n-1} = \sigma_{n} \circ \tau_{n-1}$ and $\tau_{n} \circ \sigma_{n-1} = \tau_{n} \circ \tau_{n-1}$.  These relations ensure that there are precisely two maps $\sigma_{n,m}:\tau_{n,m}:n \rightrightarrows m$ for $m > n$.  We abbreviate $\sigma_{n,m}$ and $\tau_{n,m}$ by $\sigma$ and $\tau$ when the context is clear.

The presheaf category $[\mathbb G^\mathrm{op},\Set]$ is the category
    of \emph{globular sets}.  A globular set $A:\f G^{op} \to \Set$ is specified 
  by objects $A(n)$ together with morphisms
$$\cd{
A(n) \ar@<0.8ex>[r]^-{s_{n}} \ar@<-0.8ex>[r]_-{t_{n}} & A(n-1)}
$$
where we write $s_{n}=A(\sigma_{n})$ and $t_{n}=A(\tau_{n})$, or just $s$ and $t$ if the context is clear. 

Elements of the set $A(n)$ are referred to as $n$-cells of $A$.   A pair $(x,y)$ of $n$-cells in $A$ are said to be \emph{parallel} if either $n=0$ or if the equations $s_{n}x=s_{n}y$ and $t_{n}x=t_{n}y$ hold.  A \emph{lifting} for such a parallel pair $(x,y)$ is an element $z \in A(n+1)$ such that $s_{n+1}z=x$ and $t_{n+1}z=y$.  If each parallel pair of $n$-cells in $A$ has a lifting (for each $n$) then the globular set $A$ is said to be \emph{contractible}.

The notions introduced in the preceding paragraph can be understood in terms of lifting properties.  Firstly, an $n$-cell in $A$ amounts to a morphism of globular sets $Yn \to A$ from the representable globular set $Yn=\mathbb G(-,n)$.  Let $j_{n}:S(n) \hookrightarrow Y(n+1)$ be the globular subset of $Y(n+1)$ obtained by omitting the single $n+1$-cell of $Y(n+1)$.  This globular set has two distinct $m$-cells for all $m \leq n$ and none in higher dimensions.  Now a parallel pair of $n$-cells $(x,y)$ in $A$ corresponds to a morphism $S(n) \to A$ and, moreover, the parallel pair admits a lifting just when the corresponding morphism $S(n) \to A$ admits an extension along $j_{n}:S(n) \hookrightarrow Y(n+1)$. 
Later on we will use the following well known expression of $S(n)$ as the coequaliser
\begin{equation*}
\xymatrix{
Y(n-1) + Y(n-1) \ar@/^1ex/[rr]^-{\sigma^{*} + \sigma^{*}} \ar@/_1ex/[rr]_-{{\tau^{*} + \tau^{*}}} && Y(n) + Y(n) \ar[r]^-{\langle j_{n},j_{n} \rangle} & S(n) 
}
\end{equation*}
whose universal property captures the fact that $S(n)$ classifies parallel $n$-cells.

\subsection{The category $\Theta_0$ of globular cardinals}

The category of globular sets has a small dense subcategory
    $\Theta_0$, first described by Berger
    \cite{Berger2002}, whose objects have been termed
    \emph{globular cardinals} by Street \cite{Street2000}.
    These include the representables---the $n$-globes $Yn$ for each
    $n$---but also shapes such as the globular set with distinct cells
    as depicted below.
    \begin{equation*}      
    \xy
      (0,0)*+{\bullet}="00"; (20,0)*+{\bullet}="10";(40,0)*+{\bullet}="20";
      {\ar^{} "00"; "10"};{\ar@/^1.5pc/^{} "10"; "20"};{\ar@/_1.5pc/^{} "10"; "20"}; {\ar^{} "10"; "20"};{\ar@{=>}^{}(30,6)*+{};(30,1)*+{}};{\ar@{=>}^{}(30,-1)*+{};(30,-6)*+{}};
      \endxy
    \end{equation*}
    The globular cardinals can be parametrised in various ways, for
    instance using trees \cite{Batanin1998Monoidal,
      Berger2002};
    following~\cite{Malts2010}, we will use
    \emph{tables of dimensions}---sequences
    $\vec n=(n_{1}, \ldots ,n_{k})$ of natural numbers of odd length
    with $n_{2i-1} > n_{2i} < n_{2i+1}$. Given such a table $\vec{n}$
    and a functor $D \colon \mathbb G \to \C$, we obtain a diagram
    \begin{equation*}
      \xy
      (12,0)*+{Dn_{2}}="10"; (36,0)*+{Dn_{4}}="30";(60,-5)*+{\cdots}; (84,0)*+{Dn_{k-1}}="40";
      (0,-10)*+{Dn_{1}}="01"; (24,-10)*+{Dn_{3}}="21";(48,-10)*+{Dn_{5}}="41";(72,-10)*+{Dn_{k-2}}="31";(96,-10)*+{Dn_{k}}="51";
      {\ar_{D\tau} "10"; "01"};{\ar^{D\sigma} "10"; "21"};{\ar_{D\tau} "30"; "21"};{\ar^{D\sigma} "30"; "41"};{\ar_{D\tau} "40"; "31"};{\ar^{D\sigma} "40"; "51"};
      \endxy
    \end{equation*}
    whose colimit in $\C$, when it exists, will be written as
    $D(\vec n)$, and called the \emph{$D$-globular sum indexed by
      $\vec n$}. If all such colimits exist then we say that $\C$ admits $D$-globular sums --- let us emphasise that admitting globular sums is not a property of a category but of a category \emph{under} $\mathbb G$.
     Dually a globular object $A:\f G^{op} \to \C$ determines a diagram 
$$
\xy
(12,0)*+{A(n_{2})}="10"; (36,0)*+{A(n_{4})}="30";(60,5)*+{\cdots}; (84,0)*+{A(n_{k-1})}="40";
(0,10)*+{A(n_{1})}="01"; (24,10)*+{A(n_{3})}="21";(48,10)*+{A(n_{5})}="41";(72,10)*+{A(n_{k-2})}="31";(96,10)*+{A(n_{k})}="51";
{\ar_{t} "01"; "10"};{\ar^{s} "21"; "10"};{\ar_{t} "21"; "30"};{\ar^{s} "41"; "30"};{\ar_{t} "31"; "40"};{\ar^{s} "51"; "40"};
\endxy
$$
whose limit, denoted $A(\vec{n})$, is called a \emph{globular product}. 
      
Consider $Y \colon \mathbb G \to [\mathbb G^\mathrm{op},\Set]$.  Taking the full subcategory of $[\f G^{op},\Set]$ on the $Y$-globular sums yields the initial, up to equivalence, category under $\mathbb G$ with globular sums.  $\Theta_{0}$ is a skeleton of this: we can view its objects as the tables of dimensions whilst $\Theta_{0}(\vec{n},\vec{m}) = [\f G^{op},\Set](Y(\vec{n}), Y(\vec{m}))$.  This gives a factorisation 
\begin{equation*}
\xymatrix{
\mathbb G \ar[r]^-{D} & \Theta_0 \ar[r]^-{Y} & [\f G^{op},\Set]
}
\end{equation*}
of the Yoneda embedding, in which $Dn=(n)$ on objects.


\subsection{Globular theories and their models}
A \emph{globular theory} $J:\Theta_0 \to \mathbb T$ is an identity on objects functor preserving globular sums.
The \emph{category $\GTh$ of globular theories} is the full subcategory of $\Theta_{0}/\Cat$ containing the globular theories.

Given a globular theory $J:\Theta_0 \to \mathbb T$ its \emph{category of models} $\Mod(\mathbb T) \hookrightarrow [\mathbb T^{op},\Set]$ is the full subcategory containing the globular product preserving functors.  There is a forgetful functor $U^{T}:\Mod(\mathbb T) \to [\f G^{op},\Set]$ given by restriction along $(J \circ D)^{op}:\mathbb G^{op} \to \mathbb T^{op}$.  
Furthermore a morphism $N:\mathbb S \to \mathbb T$ of globular theories induces, by restriction, a functor $N^{\star}:\Mod(\mathbb T) \to \Mod(\mathbb S)$ commuting with the forgetful functors to the category of globular sets.  

The following proposition records a few basic results that we will need about globular theories and their models.
\begin{Proposition}\label{prop:models}
Let $\mathbb T$ be a globular theory.
\begin{enumerate}
\item The forgetful functor $U^{T}:\Mod(\mathbb T) \to [\f G^{op},\Set]$ is monadic and preserves filtered colimits.  In particular $\Mod(\mathbb T)$ is locally finitely presentable.
\item Each representable $\mathbb T(-,\vec{n}):\mathbb T^{op} \to \Set$ is a model of $\mathbb T$ --- namely the free $T$-model on the globular cardinal $Y\vec{n}$.  Therefore we obtain a factorisation $K_{T}:\mathbb T \to \Mod(\mathbb T)$ of the Yoneda embedding $Y_{T}$ through the full inclusion $I_{T}:\Mod(\mathbb T) \hookrightarrow [\mathbb T^{op},\Set]$.
\item Given a morphism $N:\mathbb S \to \mathbb T$ of globular theories the restriction functor $N^{*}:\Mod(\mathbb T) \to \Mod(\mathbb S)$ has left adjoint $N_{!} = Lan_{K_{S}}(K_{T} \circ N):\Mod(\mathbb S) \to \Mod(\mathbb T)$.
%
\end{enumerate}
\end{Proposition}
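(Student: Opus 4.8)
The plan is to prove the three parts in the order (2), (1), (3), each feeding into the next.

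For (2), the starting observation is that every object of $\mathbb{T}$ is a globular sum of globes: $J$ is the identity on objects and preserves globular sums, while by construction each object $\vec{n}$ of $\Theta_{0}$ is the $Y$-globular sum $D(\vec{n})$ of the globes $Yn_{i}$, so $J$ carries the defining cocone to one exhibiting $\vec{n}=D(\vec{n})$ in $\mathbb{T}$. Consequently each representable $\mathbb{T}(-,\vec{n})\colon\mathbb{T}^{op}\to\Set$ sends every globular sum cocone to a limit cone — this is just the universal property of a colimit read off in a hom-set — so it preserves globular products and is a model; in particular $Y_{T}$ factors through $I_{T}$, which defines $K_{T}$. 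For the freeness statement I would combine the Yoneda lemma with the fact that $Y\vec{n}$ is the globular sum of the $Yn_{i}$ in globular sets: for any model $M$ this yields natural bijections $\Mod(\mathbb{T})(\mathbb{T}(-,\vec{n}),M)\cong M(\vec{n})\cong[\mathbb{G}^{op},\Set](Y\vec{n},U^{T}M)$, the last step using that $M$ preserves globular products, which exhibits $\mathbb{T}(-,\vec{n})$ as the free model on $Y\vec{n}$.

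For (1), the point to exploit is that a globular product is a limit over a fixed \emph{finite} diagram, so such limits commute in $\Set$ with arbitrary limits and with sifted colimits — in particular with filtered colimits and with reflexive coequalizers. Hence $\Mod(\mathbb{T})\hookrightarrow[\mathbb{T}^{op},\Set]$ is closed under limits, filtered colimits and reflexive coequalizers, all computed pointwise; being an orthogonality class with respect to a small set of morphisms between finitely presentable presheaves, it is moreover reflective and locally finitely presentable, hence cocomplete. Since $U^{T}$ is restriction along $(JD)^{op}$ it preserves all of these pointwise (co)limits, so, being continuous and accessible between locally finitely presentable categories, it has a left adjoint; and it reflects isomorphisms, since a map of models invertible on each globe is invertible on each $\vec{n}=D(\vec{n})$, being there a limit of invertible maps. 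These are exactly the ingredients for Beck's monadicity theorem — the coequalizers of $U^{T}$-split pairs it requires are reflexive, hence computed pointwise and preserved — so $U^{T}$ is monadic; and since $U^{T}$ preserves filtered colimits the induced monad is finitary, whence $\Mod(\mathbb{T})$, as the category of algebras for a finitary monad on a locally finitely presentable category, is locally finitely presentable.

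For (3), I would first note that $K_{S}$ is dense — by the Yoneda lemma and full faithfulness of $I_{S}$, its nerve $\Mod(\mathbb{S})(K_{S}-,=)\colon\Mod(\mathbb{S})\to[\mathbb{S}^{op},\Set]$ is nothing but $I_{S}$ — and that $N^{*}$, being again a restriction, preserves limits and filtered colimits, hence is continuous and accessible; by (1) both model categories are locally finitely presentable, so $N^{*}$ has a left adjoint $N_{!}$ by the adjoint functor theorem. A direct hom-set computation, using that $N$ is the identity on objects, gives for each globular cardinal $\vec{n}$ and each model $G$ of $\mathbb{T}$ natural bijections $\Mod(\mathbb{T})(N_{!}K_{S}\vec{n},G)\cong\Mod(\mathbb{S})(\mathbb{S}(-,\vec{n}),N^{*}G)\cong G(\vec{n})\cong\Mod(\mathbb{T})(\mathbb{T}(-,\vec{n}),G)$, so $N_{!}K_{S}\cong K_{T}N$; since $N_{!}$ is cocontinuous and $K_{S}$ is dense, $N_{!}$ agrees with the left Kan extension of its restriction along $K_{S}$, i.e.\ $N_{!}=Lan_{K_{S}}(N_{!}K_{S})=Lan_{K_{S}}(K_{T}N)$. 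I expect the only genuinely delicate point to be the monadicity claim in (1): the observation that the coequalizers of $U^{T}$-split pairs needed for Beck are reflexive and therefore computed pointwise — because sifted colimits commute with finite limits in $\Set$ — is what makes it routine, and as an alternative one could invoke that a globular theory is a theory with arities $\Theta_{0}$ and quote the associated monadicity result (cf.\ \cite{Bourke2018Monads}). Everything else is standard manipulation with representables, (co)limits and Kan extensions.
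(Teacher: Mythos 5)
Parts (2) and (3) of your proposal are sound. Part (2) is essentially the paper's own argument (representables turn colimits into limits; freeness is Yoneda plus preservation of globular products). Part (3) takes a genuinely different but valid route: the paper first forms $Lan_{K_{S}}(K_{T}N)$ and then verifies the adjunction by a hom-set computation, whereas you obtain $N_{!}$ from the adjoint functor theorem and only afterwards identify it with the Kan extension via density of $K_{S}$ and cocontinuity of $N_{!}$; both work, and yours arguably isolates the formal content more cleanly.

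The genuine gap is in part (1), in the sentence asserting that a globular product, being a limit over a fixed finite diagram, commutes in $\Set$ with sifted colimits and in particular with reflexive coequalizers. Sifted colimits in $\Set$ commute with finite \emph{products}, not with finite \emph{limits}; a globular product is an iterated pullback over the zigzag $A(n_{1})\to A(n_{2})\leftarrow A(n_{3})\to\cdots$, and reflexive coequalizers do not commute with pullbacks. (Quotient $Z_{0}=\{0,1\}$ to a point by a reflexive pair and pull back the constant subdiagrams $\{0\}$ and $\{1\}$: the coequalizer of the pullbacks is empty, the pullback of the coequalizers is a point.) Consequently $\Mod(\mathbb T)$ is not in general closed under pointwise reflexive coequalizers in $[\mathbb T^{op},\Set]$ and $U^{T}$ need not preserve them: for the globular theory of strict $\omega$-categories this already fails in dimension $1$, where it would say that the forgetful functor from $\Cat$ to graphs preserves reflexive coequalizers --- it does not, since free categories on coproducts of single edges have no composites while free categories on composable strings do. This failure is precisely why globular theories are theories ``with arities'' rather than many-sorted Lawvere theories over globular sets, so it cannot be argued away. (A secondary slip: a $U$-split pair need not be reflexive --- the splitting gives $ft=1$ but only $gt=se$.) The repair is to run Beck's precise theorem with $U^{T}$-split pairs, whose coequalizers are split, hence absolute, hence preserved by the globular-product functors --- or to do what you offer as a fallback and what the paper in fact does, namely cite the monadicity result for globular theories (Proposition 6.3.6 of \cite{Ara2010Thesis}, or \cite{Bourke2018Monads}). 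The remaining ingredients of your part (1) --- local finite presentability via the orthogonality-class description, preservation of filtered colimits, existence of the left adjoint, reflection of isomorphisms --- are all correct.
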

\begin{proof}
For monadicity in (1) see Proposition 6.3.6 of \cite{Ara2010Thesis}.  That $U^{T}$ preserves filtered colimits follows easily from the fact that each globular cardinal $Y\vec n$ is finitely presentable as a globular set.  It follows that the resulting monad $T=U^{T}F^{T}$ preserves filtered colimits.  Since by~\cite[Satz~10.3]{Gabriel1971Lokal} the category of algebras for a filtered colimit preserving monad on a locally finitely presentable category is again locally finitely presentable, this completes the proof of (1).

As representables send colimits to limits each representable is a $\mathbb T$-model, and the stated freeness property is simply an instance of the Yoneda lemma.  This establishes (2).  For (3) observe that since $\Mod(\mathbb T)$ is locally presentable, by (1), it is cocomplete.  Therefore we can form the left Kan extension $L=Lan_{K_{S}}(K_{T} \circ N):\Mod(\mathbb S) \to \Mod(\mathbb T)$.  
It remains to prove that $L \dashv N^{\star}$.  Since the extension $L$ is pointwise we have that $Lan_{K_{S}}(K_{T} \circ N) \cong Lan_{Y_{S}}(K_{T} \circ N) \circ \Mod(\mathbb S)(K_{S}-,1)$, where the first component $L^{\prime}$ is the left Kan extension along the Yoneda embedding.  Since the values of $K_{S}$ are representable we have that $\Mod(\mathbb S)(K_{S}-,1) \cong I_{S}$, where $I_{S}$ is the full inclusion viewing models as presheaves.  Hence we have natural isomorphisms $$\Mod(\mathbb T)(LX,A) \cong \Mod(\mathbb T)(L^{\prime}I_{S}X,A) \cong [\mathbb S^{op},\Set](I_{S}X,\Mod(\mathbb T)(K_{T} N-,A))$$
with the second using the Kan adjunction $L^{\prime} \dashv \Mod(\mathbb T)(K_{T} N-,1)$.    Since $N^{\star}$ acts by restriction we have $I_{S} \circ N^{\star} \cong \Mod(\mathbb T)(K_{T} N-,1)$ so that the right side above is naturally isomorphic to $[\mathbb S^{op},\Set](I_{S}X,I_{S}N^{\star}A)$.  By fully faithfulness of $I_{S}$ this in turn is naturally isomorphic to $\Mod(\mathbb S)(X,N^{\star}A)$ and now the composite natural isomorphism $\Mod(\mathbb T)(LX,A) \cong \Mod(\mathbb S)(X,N^{\star}A)$ proves the claim.
\end{proof}


\subsection{Contractible and cellular globular theories}\label{sect:cellular}

Let $J:\Theta_0 \to \mathbb T$ be a globular theory.  A pair of morphisms
$$u,v:(n) \rightrightarrows \vec{m} \in \mathbb T$$
is said to be parallel if either $n=0$ or $u \circ \sigma_n = v \circ \sigma _n$ and $u \circ \tau_n = v \circ \tau_n$.  A \emph{lifting} for the parallel pair consists of a morphism $h:(n+1) \to \vec{m}$ such that $h \circ {\sigma_{n+1}} = u$ and $h \circ {\rho_{n+1}} = v$.  If each parallel pair in $\mathbb T$ has a lifting then $\mathbb T$ is said to be \emph{contractible}.
 
\begin{Remark}\label{rk:contract}
A parallel pair $(u,v)$ in the globular theory $\mathbb T$ as above is simply a parallel pair of $n$-cells in the globular set $\mathbb T(JD-,\vec{m}):\mathbb G^{op}\to \Set$.  In this way we see that $\mathbb T$ is contractible just when the globular set $\mathbb T(JD-,\vec{m}):\mathbb G^{op}\to \Set$ is contractible for each $\vec{m} \in \Theta_0$.  
\end{Remark}

Consider a family $\I$ of parallel pairs $u_i,v_i:(n_i) \rightrightarrows \vec{m}_i$ in the globular theory $\mathbb T$.\begin{footnote}{That is, a set $I$ together with a parallel pair $u_{i},v_{i} \in \mathbb T$ for each $i \in I$.}\end{footnote}
There exists a globular theory  $\mathbb T_{\I}$ and morphism of globular theories $$J_{\I}:\mathbb T \to \mathbb T_{\I}$$ obtained by \emph{freely adjoining liftings} for the parallel pairs of $\I$.  More precisely
\begin{enumerate}
\item For each $i \in I$ the parallel pair $u_i,v_i:(n_i) \rightrightarrows \vec{m}_i \in T_{\I}$ is equipped with a lifting $\phi_{i}:(n_{i}+1) \to \vec{m}_{i}$ and
\item Given a morphism of globular theories $Q:\mathbb T \to \mathbb S$ together with a lifting $\theta_i$ of the parallel pair $(Qu_i,Qv_i)$ for each $i \in I$, there exists a unique morphism of globular theories $\overline{Q}:\mathbb T_{\I} \to \mathbb S$ such that $\overline{Q} \circ J_{\I} = Q$ and $\overline{Q}\phi_i= \theta_i$ for each $i \in I$.
\end{enumerate}
These two properties characterise $\mathbb T_{\I}$ uniquely up to isomorphism under $\mathbb T$. 

A \emph{cellular} globular theory is, by definition, the colimit of a chain of globular theories 
\begin{equation*}
\xymatrix{
\Theta_{0}=\mathbb T_{0} \ar[r]^{J^{0}_{1}} & \mathbb T_{1} \ar[r] & \cdots \ar[r] & \mathbb T_{n} \ar[r]^-{J^{n}_{n+1}} & \mathbb T_{n+1} \ar[r]^-{J^{n+1}_{\omega}} & \mathbb T_{\omega} = \mathbb T
}
\end{equation*}
in which each $J^{n}_{n+1}:\mathbb T_{n} \to \mathbb T_{n+1}$ is of the form $J_{\I_{n+1}}:\mathbb T_n \to \mathbb T_{\I_{n+1}}$ for some family of parallel pairs $\I_{n+1}$ of $\mathbb T_{n}$.  We remark that cellularity in the above sense can be re-expressed as cellularity with respect to a set of maps --- see Example 6.6.  

A \emph{coherator} is a cellular contractible globular theory and, by definition, a \emph{Grothendieck weak $\omega$-groupoid} is a model of a coherator.  The homotopy theory of these structures has been studied in \cite{Malts2010,Ara2013On-the} and more recently in \cite{Henry, Lanari}, though important questions remain open.

Now for $\mathbb T$ cellular we have, on the level of models, a cochain
\begin{equation*}
\cd{ \Mod(\mathbb T) \ar[d]_{U^{\omega}_{n+1}} \ar[dr]^{U^{\omega}_{n}} \ar[drrrr]^{U^{\omega}_{0}} \\
\cdots \Mod(\mathbb T_{n+1}) \ar[r]_<<<<{U^{n+1}_{n}} & \Mod(\mathbb T_{n}) \ar[r] & \cdots \ar[r] & \Mod(\mathbb T_{1})\ar[r]_<<<<{U^{1}_{0}} & \Mod(\Theta_0)}
\end{equation*}
where we write $U^{m}_{n} = (J^{n}_{m})^{*}$ for all $n < m$.  We refer to this as the \emph{tower of models} of $\mathbb T$.  Since taking categories of models sends colimits of globular theories to limits of categories over $Mod(\Theta_0)$  --- see Section 5.3 of \cite{Bourke2018Monads} --- it follows that $\Mod(\mathbb T)$ is the limit of the above cochain of categories.

\begin{ConjectureF}[Maltsiniotis]
Each of the connecting functors $J^{n}_{m}:\mathbb T_{n} \to \mathbb T_{m}$ defining a cellular globular theory $\mathbb T$ is faithful.
\end{ConjectureF}

\begin{Remark}
The conjecture is stated in Section 1.4 of Maltisiniotis \cite{Malts2010} and in the thesis of Ara \cite{Ara2010Thesis} as Conjecture 4.1.7.  It is used in Proposition 6.7.15 and Corollary 6.7.16 of \cite{Ara2010Thesis} to give an explicit description of the globular theory $\Theta_{BL}$ whose models coincide with the weak $\omega$-categories of Batanin-Leinster \cite{Batanin1998Monoidal,Leinster2002A-survey} --- to be precise, the models of the initial globular operad with contraction.
\end{Remark}

Before moving on, let us identify the models of globular theories of the form $\mathbb T_{\I}$.  
To this end, we define a $(\mathbb T, \I)$-model $(X,x)$ to consist of a $\mathbb T$-model $X$ together with a morphism $x_i$ for each $i \in I$ such that the diagram
$$
\cd{X(\vec{m}_i) \ar@<3pt>[dr]^{Xu_i} \ar@<-3pt>[dr]_{Xv_i} \ar[rr]^{x_i} && X(n_i+1) \ar@<3pt>[dl]^{t} \ar@<-3pt>[dl]_{s} \\
& X(n_i)}
$$
is serially commutative.  The morphisms of $\Mod(\mathbb T, \I)$ commute with the given liftings.  There is an evident forgetful functor $U:\Mod(\mathbb T, \I) \to \Mod(\mathbb T)$.

\begin{Proposition}\label{prop:TImodels}
There is an isomorphism of categories $K:\Mod(\mathbb T, \I) \to \Mod(\mathbb T_{\I})$ rendering commutative the diagram 
\begin{equation*}
\xymatrix{
\Mod(\mathbb T_{\I}) \ar[dr]_{{J_{\I}}^{\star}} \ar[rr]^{K} && \Mod(\mathbb T, \I) \ar[dl]^{U} \\
& \Mod(\mathbb T)
}
\end{equation*}
\end{Proposition}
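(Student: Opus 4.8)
The plan is to exhibit mutually inverse functors between $\Mod(\mathbb T_{\I})$ and $\Mod(\mathbb T,\I)$ commuting with the forgetful functors to $\Mod(\mathbb T)$. In the easy direction, define $\Phi\colon\Mod(\mathbb T_{\I})\to\Mod(\mathbb T,\I)$ by sending a model $M$ to the pair $(J_{\I}^{\star}M,\,(M\phi_i)_{i\in I})$. Since $J_{\I}$ is the identity on objects, $M\phi_i$ is a morphism $M(\vec{m}_i)\to M(n_i+1)$, and applying $M$ to the lifting equations $\phi_i\circ\sigma_{n_i+1}=u_i$ and $\phi_i\circ\tau_{n_i+1}=v_i$ from property~(1) shows exactly that the square in the definition of a $(\mathbb T,\I)$-model is serially commutative. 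A morphism of $\mathbb T_{\I}$-models restricts to a morphism of $\mathbb T$-models that commutes with the components at each $\phi_i$, so $\Phi$ is a functor, and plainly $U\circ\Phi=J_{\I}^{\star}$.

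For the inverse, take $(X,(x_i)_{i\in I})\in\Mod(\mathbb T,\I)$. A $\mathbb T_{\I}$-model is the same thing as a globular-sum-preserving functor $\mathbb T_{\I}\to\Set^{\mathrm{op}}$, and I would use the fact that the construction of $\mathbb T_{\I}$ by freely adjoining liftings enjoys a universal property not only with respect to morphisms into globular theories but with respect to globular-sum-preserving functors into $\Set^{\mathrm{op}}$. Now $X$, being a $\mathbb T$-model, is precisely a globular-sum-preserving functor $\mathbb T\to\Set^{\mathrm{op}}$; and a lifting of the parallel pair $(Xu_i,Xv_i)$ in $\Set^{\mathrm{op}}$ unwinds to a function $X(\vec{m}_i)\to X(n_i+1)$ making the serial-commutativity square commute, that is, to the datum $x_i$. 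The universal property thus produces a unique globular-sum-preserving $M=\Psi(X,x)\colon\mathbb T_{\I}\to\Set^{\mathrm{op}}$ with $M\circ J_{\I}=X$ and $M\phi_i=x_i$. That $M$ is genuinely a $\mathbb T_{\I}$-model --- i.e.\ preserves globular products as a presheaf on $\mathbb T_{\I}$ --- is automatic: since $J_{\I}$ preserves globular sums and is bijective on objects, every globular-sum cocone in $\mathbb T_{\I}$ is the image under $J_{\I}$ of one in $\mathbb T$, so $M$ preserves it if and only if $X=M\circ J_{\I}$ does.

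Functoriality of $\Psi$ follows from the uniqueness clause of the universal property, since a morphism $(X,x)\to(X',x')$ is a $\mathbb T$-model morphism commuting with the $x_i$ and hence extends uniquely over $\mathbb T_{\I}$; the same uniqueness gives $\Psi\Phi=\mathrm{id}$ (both $\Psi\Phi(M)$ and $M$ are globular-sum-preserving functors on $\mathbb T_{\I}$ agreeing on $\mathbb T$ and on each $\phi_i$), and $\Phi\Psi=\mathrm{id}$ holds by construction. Thus $\Phi$ and $\Psi$ are mutually inverse isomorphisms of categories compatible with $J_{\I}^{\star}$ and $U$, which gives the functor $K$ of the statement and its commuting triangle. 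The only step that is not routine bookkeeping is the extension of the universal property of $\mathbb T_{\I}$ to the target $\Set^{\mathrm{op}}$: here one must unwind the actual construction of $\mathbb T_{\I}$ and verify that a globular-sum-preserving functor out of it is freely and constraint-freely determined by its restriction to $\mathbb T$ together with a chosen lifting for each $(Qu_i,Qv_i)$ --- which is precisely what makes the proposition true.
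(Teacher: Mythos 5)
Your first direction ($\Phi$) and the overall two-functor strategy match the paper's. The genuine issue is the step you yourself flag as non-routine: the extension of the universal property of $\mathbb{T}_{\I}$ from morphisms of globular theories to globular-sum-preserving functors into $\Set^{\mathrm{op}}$. You propose to obtain it by ``unwinding the actual construction of $\mathbb{T}_{\I}$'', but there is no construction to unwind: the paper introduces $\mathbb{T}_{\I}$ purely through its universal property among globular theories (properties (1) and (2)), and $\Set^{\mathrm{op}}$ is not a globular theory, so that universal property does not apply to it as stated. As written, the inverse functor $\Psi$, the identity $\Psi\Phi=\mathrm{id}$, and the functoriality of $\Psi$ all rest on this unproved extension, so the argument is incomplete exactly at its load-bearing point.

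The paper closes this gap with endomorphism theories: factor $X^{\mathrm{op}}:\mathbb{T}\to\Set^{\mathrm{op}}$ as an identity-on-objects functor $E_{X}$ followed by a fully faithful $M_{X}$. The intermediate category, restricted along $\Theta_0\to\mathbb{T}$, is a genuine globular theory $\mathbb{E}\mathrm{nd}(X)$ and $E_{X}$ a morphism of globular theories. Since $M_{X}$ is fully faithful and the relevant comparison functors are identity on objects (so that the two classes of maps are orthogonal), liftings of $(Xu_i,Xv_i)$ in $\Set^{\mathrm{op}}$ --- i.e.\ the data $x_i$ --- correspond exactly to liftings of $(E_{X}u_i,E_{X}v_i)$ in $\mathbb{E}\mathrm{nd}(X)$, and now the stated universal property of $\mathbb{T}_{\I}$ applies verbatim to give the unique extension $\mathbb{T}_{\I}\to\mathbb{E}\mathrm{nd}(X)$; postcomposing with $M_{X}$ yields the desired model. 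Note that your one-line treatment of morphisms hides the same gap a second time: to see that a $\mathbb{T}$-model map commuting with the liftings is natural with respect to \emph{all} arrows of $\mathbb{T}_{\I}$, not just those coming from $\mathbb{T}$ and the $\phi_i$, the paper runs the identical argument with the endomorphism theory of a morphism, i.e.\ with target $(\Set^{\atwo})^{\mathrm{op}}$. If you supply this reduction (or some equivalent device), your outline becomes a complete proof along the paper's lines.
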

\begin{proof}
The functor $K$ sends a $\mathbb T_{\I}$-model $X$ to the $(\mathbb T, \I)$-model $(X\circ {J_{\I}}^{op},X_{\phi})$ whose first component is $X \circ {J_{\I}}^{op}:\mathbb T^{op} \to \mathbb T_{\I}^{op} \to \Set$ and whose lifting function at $i \in I$ has value $X(\phi_i):X(\vec{m}_i) \to X(n_i+1)$.  Clearly $K$ commutes with the respective functors to $\Mod(\mathbb T)$.  Hence it remains to show that it is bijective on objects and fully faithful.  We establish these properties using endomorphism theories.

To this end, let $X \in \Mod(\mathbb T)$.  By definition, the (relative) endomorphism theory of $X$ is obtained as the (identity on objects/fully faithful)-factorisation of $X^{op}:\mathbb T \to \Set^{op}$ depicted on the left below

\begin{equation*}
\xymatrix{
\mathbb T \ar[dr]_{X^{op}}  \ar[r]^-{E_{X}} & \mathbb End(X) \ar[d]^{M_{X}} && \mathbb T \ar@/^1.85pc/[rr]^{X^{op}}\ar[d]_{F} \ar[r]^-{E_{X}} & \mathbb End(X) \ar[r]^{M_{X}} & \Set^{op}\\
& \Set^{op} && \mathbb S \ar[ur]^{A} \ar[urr]_{B^{op}}
}
\end{equation*}
Since $X^{op}$ preserves globular sums so does $E_{X}$, so that $\mathbb End(X)$ becomes a globular theory on restricting $E_{X}$ along $J:\Theta_{0} \to \mathbb T$ and $E_{X}:\mathbb T \to \mathbb End(X)$ a morphism thereof.  The functor $M_{X}$ also preserves globular sums. The \emph{universal property} of $\mathbb End(X)$ endowed with $M_X$ is that, given a morphism of globular theories $F:\mathbb T \to \mathbb S$, composition with $M_{X}$ induces a bijection between globular theory morphisms $A:\mathbb S \to \mathbb End(X)$ with $A \circ F = E_{X}$ and $\mathbb S$-model structures $B:\mathbb S^{op} \to \Set$ such that $F^{\star}B=X$.  
(This correspondence is indicated in the diagram above right and follows immediately from the fact that $F$, being identity on objects, and $M_{X}$, being fully faithful, are orthogonal.)

Now $\mathbb End(X)(\vec n,\vec m) = \Set(X\vec m,X\vec n)$ with $E_{X}:\mathbb T \to \mathbb End(X)$ acting on morphisms as application of $X$.  It follows that to equip $X$ with the structure $(X,x)$ of a $(\mathbb T,\I)$-model is equally to equip the morphism of theories $E_{X}:\mathbb T \to \mathbb End(X)$ with a lifting $(n_i+1) \to \vec{m}_i$ of the parallel pair $(E_{X}u_i,E_{X}v_i)$ for each $i \in I$.  Therefore by the universal property of $\mathbb T_{\I}$ there exists a unique morphism $\mathbb T_{\I} \to \mathbb End(X)$ extending $E_{X}$ along ${J_{\I}}$; postcomposing with $M_{X}$ gives the corresponding $\mathbb T_{\I}$-model $K^{\prime}(X,x)$.  Spelling out the formulae involved we see that $KK^{\prime}(X,x)=(X,x)$.  To see that $K^{\prime}K=1$ we use the universal property of $\mathbb T_{\I}$.

The proof that $K$ is bijective on arrows is almost identical, except that it uses endomorphism theories of \emph{morphisms} of $\mathbb T$-models.  A $\mathbb T$-model morphism $f:X \to Y$ corresponds to a globular product preserving functor $\mathbb T \to (\Set^{\atwo})^{op}$; now factoring it as (identity on objects/fully faithful) gives $E_{f}:\mathbb T \to \mathbb End(f)$, the (relative) endomorphism theory of $f:X \to Y$.  We leave the details of this part to the reader.
\end{proof}

\nopagebreak

\section{Iterated algebraic injectives and models of cellular theories}\label{sect:iterated}

\subsection{Algebraic injectivity}
By a family $\I = (I,\alpha)$ of morphisms in $\C$ we mean a set $I$ and function $\alpha:I \to Arr(\C)$.

Given such, an \emph{algebraic $\I$-injective} is a pair $(C,c)$ where $C \in \cc$ together with extensions
$$\cd{A_i \ar[d]_{\alpha_i} \ar[r]^{f} & C\\
B_i \ar@{.>}[ur]_{c(i,f)}}$$
for each lifting problem with $i \in I$.  Morphisms $f:(C,c) \to (D,d)$ of algebraic injectives are morphisms $f:C \to D$ commuting with the given extensions. We write $\Ainj(\I)$ for the category of algebraic injectives and $V:\Ainj(\I) \to \C$ for the forgetful functor to $\C$.

\begin{Notation}
We sometimes write $|\I| = \{\alpha_{i}:i \in I\}$ for the underlying set of morphisms in $\C$.  When we assert a property of morphisms of $\I$ --- for example that they are mono --- this should be interpreted as a property of morphisms of $|\I|$.
\end{Notation}

\begin{Example}\label{example:magma}
Let $\iota:2 \hookrightarrow 3 \in \Set$ be the inclusion of the cardinal with $2$ elements into the cardinal with $3$.  It is easy to see that $\Ainj(\iota)$ is the category of magmas.  More generally, consider a finitary signature $\Omega:\mathbb N \to \Set$ and the associated function $I_{\Omega} = \Sigma_{n \in \mathbb N} \Omega(n) \to Arr(\Set)$ sending each element of $\Omega(n)$ to the inclusion  $n \hookrightarrow n+1$.  Then $\Ainj(\I_{\Omega})$ is the category of $\Omega$-algebras and, indeed, categories $\Ainj(\I)$ for families $\I$ of morphisms amongst $\{n \hookrightarrow n+1:n \in \mathbb N\}$ are precisely the categories $\Omega$-Alg of algebras for finitary signatures $\Omega$.
\end{Example}

\begin{Example}
Consider the category of simplicial sets and $\I =\{\Lambda^{k}_{n} \to \Delta_n\}$ the set of horn inclusions.  Then $\Ainj(\I)$ is the category of \emph{algebraic Kan complexes} considered by Nikolaus \cite{Nikolaus2011Algebraic}.
\end{Example}

The following is a standard result.  The case dealing with a set, rather than a family, of morphisms is dealt with in the proof of Theorem 5 of \cite{Bourke2017Equipping} and the generalisation to a family of morphisms is trivial.

\begin{Proposition}\label{prop:lp}
Let $\C$ be a locally presentable category.  Then $\Ainj(\I)$ is locally presentable and $V:\Ainj(\I) \to \C$ an accessible strictly monadic right adjoint.  Furthermore, if $\C$ is locally finitely presentable and $\I$ consists of morphisms with finitely presentable domains then $\Ainj(\I)$ is locally finitely presentable and $V$ finitary.
\end{Proposition}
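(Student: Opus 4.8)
The plan is to realise $\Ainj(\I)$ as a category of algebras for a pointed endofunctor — or better, a monad — built from the data $\I$, and then to invoke standard local presentability machinery. First I would treat the case of a single morphism $\alpha\colon A\to B$ to fix ideas: an algebraic $\alpha$-injective is an object $C$ together with a section of the map $\C(B,C)\to\C(A,C)$ obtained by restriction along $\alpha$. Representing these homsets via copowers, the assignment $C\mapsto\big(A\pitchfork C \text{ with the map to } B\pitchfork C\big)$ can be packaged so that algebraic $\I$-injectives are exactly the algebras for a pointed endofunctor on $\C$; concretely, one forms the functor $R$ sending $C$ to the pushout (or rather the appropriate colimit) that attaches, for every $i\in I$ and every $f\colon A_i\to C$, a copy of $B_i$ along $\alpha_i$ — i.e.\ $RC$ is the pushout of $\coprod_{i,f} A_i \to C$ and $\coprod_{i,f} A_i \to \coprod_{i,f} B_i$ — with the evident pointing $\eta_C\colon C\to RC$. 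An algebra structure $RC\to C$ retracting $\eta_C$ is precisely a choice of extensions $c(i,f)$, naturally in the obvious sense, so $\Ainj(\I)\cong R\text{-}\mathrm{Alg}$ over $\C$, compatibly with the forgetful functors.

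The second step is to observe that $R$ is an accessible endofunctor on the locally presentable category $\C$: it is built from coproducts, the functors $\C(A_i,-)$ (which are accessible, being representable), copowers, and pushouts, all of which preserve sufficiently filtered colimits; the index $I$ is a set, so the coproduct over $i\in I$ is harmless. Hence $\eta\colon 1\Rightarrow R$ is an accessible pointed endofunctor on a locally presentable category. By the standard theory (e.g.\ Kelly's transfinite construction of free algebras, or the results on algebraically-free monads on accessible pointed endofunctors), the category $R\text{-}\mathrm{Alg}$ of algebras is locally presentable, the forgetful functor $V\colon R\text{-}\mathrm{Alg}\to\C$ is accessible and strictly monadic — the induced monad being the algebraically-free monad on $(R,\eta)$ — and $V$ is a right adjoint. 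Transporting along the isomorphism $\Ainj(\I)\cong R\text{-}\mathrm{Alg}$ gives the first assertion. For the finitary refinement, when $\C$ is locally finitely presentable and each $A_i$ is finitely presentable, each $\C(A_i,-)$ preserves filtered colimits, and finite colimits commute with filtered colimits, so $R$ is finitary; then the algebraically-free monad is finitary, whence $R\text{-}\mathrm{Alg}$ is locally finitely presentable and $V$ finitary.

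I expect the main obstacle to be purely expository rather than mathematical: setting up the endofunctor $R$ precisely enough that ``$R$-algebra structure'' literally matches ``coherent choice of extensions $c(i,f)$ for all lifting problems'' requires some care with the colimit indexing over the (possibly large-looking, but actually small once $C$ is fixed) family of squares, and with checking that algebra morphisms correspond exactly to the maps commuting with chosen extensions. Since the statement records that this is standard — appearing in the proof of Theorem~5 of \cite{Bourke2017Equipping} for a set of morphisms, with the extension to a family being trivial — I would keep this step brief, citing that source for the single-set case and remarking that replacing the set of morphisms by a family merely replaces a sum indexed by the set with a sum indexed by $I$, which changes nothing in the argument.
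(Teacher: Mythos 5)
Your proposal is correct. Note that the paper gives no argument of its own for this proposition --- it defers entirely to the proof of Theorem 5 of the cited reference --- so the comparison is really with that external proof, whose strategy is echoed in Section 6 of the present paper: there $\Ainj(\I)$ is obtained as the pullback of the (strictly monadic, accessible) forgetful functor from split epimorphisms to arrows along the accessible right adjoint $C \mapsto (\C(\alpha_i,C))_{i \in I}$, and one appeals to the stability of locally presentable categories and accessible (strictly monadic) right adjoints under such limits. Your route is genuinely different but equally standard: you realise $\Ainj(\I)$ as the category of algebras for the accessible pointed endofunctor $R$ whose value $RC$ is the pushout of $\coprod_{i,f}\alpha_i$ along the evaluation map $\coprod_{i,f}A_i \to C$, and then invoke Kelly's transfinite construction of the algebraically free monad to obtain the left adjoint, strict monadicity, accessibility of $V$, and local (finite) presentability of the algebra category. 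All the steps check out --- the identification of $R$-algebra structures and $R$-algebra morphisms with algebraic injectives and their morphisms is exactly as you say (your parenthetical ``naturally in the obvious sense'' is harmless, since a single map out of the pushout imposes no naturality beyond the indexing), and the accessibility, respectively finitarity, of $R$ follows from the presentability ranks of the domains $A_i$ as you argue. If anything, your approach fits the paper better than the pullback one: it exhibits the induced monad as algebraically free on a one-step construction, which is precisely the mechanism Section 4 of the paper exploits when it builds free algebraic injectives as colimits of chains of pushouts of this very form.
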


We now give the key example of algebraic injectivity pertaining to the study of cellular globular theories; namely, that $\f T_{\I}$-models are algebraic injectives in $Mod(\f T)$.  

To see this, let $\I$ be a family of parallel pairs in $\f T$ and consider the parallel pair $u_i,v_i:(n_i) \rightrightarrows \vec{m}_i$ for $i \in I$.  This corresponds to the pair ${u_{i}}_{*},{v_{i}}_{*}:\mathbb T(-,(n_i)) \rightrightarrows \mathbb T(-,\vec{m}_i)$ in $\Mod(\mathbb T)$, which --- by Part 2 of Proposition~\ref{prop:models} --- we can equally write as a pair of maps $${u_{i}}_{*},{v_{i}}_{*}:F(Y(n_i)) \rightrightarrows F(Y(\vec{m}_i))$$ between free $\mathbb T$-models on globular sets.  That $u_i$ and $v_i$ are parallel in $\mathbb T$ corresponds to the fact that we have a fork on the lower row below.
\begin{equation*}
\xymatrix{
F(Y(n_i-1) + Y(n_i-1)) \ar@/^1ex/[rr]^-{F(\sigma^{*} + \sigma^{*})} \ar@/_1ex/[rr]_-{F({\tau^{*} + \tau^{*}})} && F(Yn_i + Yn_i) \ar[dr]_-{F\langle u_{i*},v_{i*} \rangle} \ar[r]^-{F{\langle j_{n_i},j_{n_i} \rangle}} & F(S n_i) \ar[d]^{(u_i,v_i)} \\
&&&     F(Y\vec{m}_i)
}
\end{equation*}
Since the left adjoint $F$ preserves coequalisers, the original pair $u_i,v_i$  therefore corresponds to the factorisation $$(u_i,v_i):F(S n_i) \to F(Y\vec{m}_i)$$ through the coequaliser.  Now to give a $\mathbb T$-model $X$ the structure of a $(\mathbb T,\I)$-model is to give for each $a:\mathbb T(-,\overline{m}_i) \to X$ and $i \in I$ an extension as below left
\begin{equation*}
\xymatrix{
FY(n_i) + FY(n_i) \ar[d]_{\langle \sigma_{*},\tau_{*} \rangle} \ar[rr]^-{\langle {u_{i*},v_{i*} \rangle}} && FY(\vec{m}_i)\ar[d]^{a} && F(S n_i) \ar[d]_{F j_{n_i}} \ar[r]^-{(u_i,v_i)} & FY(\vec{m}_i) \ar[d]^{a} \\
FY(n_i+1) \ar[rr]^-{?} && X && FY(n_i+1) \ar[r]^-{?} & X
}
\end{equation*}
which, equivalently, is to give an extension rendering the rightmost square commutative.   By the universal property of the pushout square
\begin{equation}\label{eq:pushoutnew}
\cd{
F(S{n_i}) \ar[d]_{Fj_{n_i}} \ar[r]^{\langle u_i, v_i \rangle} & FY(\vec{m}_i) \ar[d]^{k_{i}} \\
FY(n_i+1) \ar[r] & FY(\vec{m}_i)_{u_i \simeq v_i}
}
\end{equation}
such are in bijective correspondence with extensions of $a$ along  $k_{i}$ as below.
\begin{equation*}
\cd{
 FY(\vec{m}_i) \ar[d]_{j_{i}} \ar[r]^-{a} & X  \\
 FY(\vec{m}_i)_{u_i \simeq v_i} \ar@{.>}[ur]_{}
 }
 \end{equation*}

Accordingly we see that extensions of $X$ to a $(\f T,\I)$-model are in bijective correspondence with extensions of $X$ to an algebraic $\I^{*}$-injective for the family $\I^{*}$ of morphisms
\begin{equation}\label{eq:indexed}
I \to Arr(\Mod(\mathbb T)):i \mapsto  k_{i}:FY(n_i) \to FY(\vec{m}_i)_{u_i \simeq v_i}
\end{equation}
Extending the above argument to morphisms in the evident manner, we obtain an isomorphism of categories $\Mod(\mathbb T,\I) \cong \Ainj(\I^{*})$ over $\Mod(\mathbb T)$.  Composing this with the isomorphism of Proposition~\ref{prop:TImodels} we obtain one direction of:

\begin{Proposition}\label{prop:isomorphism}
The following coincide up to isomorphism over $\Mod(\mathbb T)$.
\begin{itemize}
\item Forgetful functors $\Mod(\mathbb T_{\I}) \to \Mod(\mathbb T)$ induced by morphisms of theories $\mathbb T \to \mathbb T_{\I}$ for $\I$ a family of parallel pairs in $\mathbb T$;
\item Forgetful functors $\Ainj(\I) \to \Mod(\mathbb T)$ for $\I$ a family of morphisms in $\Mod(\mathbb T)$, each of which is a pushout as in \eqref{eq:pushoutnew}.
\end{itemize}
\end{Proposition}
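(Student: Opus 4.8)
The plan is to prove the two inclusions of functor-classes separately. The inclusion of the first class into the second is exactly what the discussion preceding the statement delivers: from a family $\I$ of parallel pairs in $\mathbb{T}$ one constructs the family $\I^{*}$ of \eqref{eq:indexed}, each of whose members arises as the leg $k_{i}$ of a pushout square \eqref{eq:pushoutnew}, together with an isomorphism $\Mod(\mathbb{T},\I) \cong \Ainj(\I^{*})$ over $\Mod(\mathbb{T})$; composing with the isomorphism of Proposition~\ref{prop:TImodels} gives $\Mod(\mathbb{T}_{\I}) \cong \Ainj(\I^{*})$ over $\Mod(\mathbb{T})$, which is precisely the required statement. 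So everything reduces to the reverse inclusion.

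For the reverse inclusion I would start from an arbitrary family $\I = (I,\alpha)$ of morphisms of $\Mod(\mathbb{T})$ in which each $\alpha_{i}$ is the leg $k_{i}$ of a pushout square \eqref{eq:pushoutnew}, and observe that such a presentation of $\alpha_{i}$ \emph{is} the data of a parallel pair in $\mathbb{T}$. Indeed \eqref{eq:pushoutnew} is the pushout of the fixed map $Fj_{n_{i}} \colon F(S(n_{i})) \to FY(n_{i}+1)$ along a morphism $\langle u_{i}, v_{i}\rangle \colon F(S(n_{i})) \to FY(\vec{m}_{i})$ of $\Mod(\mathbb{T})$; by the adjunction $F \dashv U^{T}$ such a morphism is a map of globular sets $S(n_{i}) \to U^{T}FY(\vec{m}_{i})$, and by the coequaliser presentation of $S(n)$ recalled in Section~\ref{sect:globular} this is precisely a parallel pair of $n_{i}$-cells of $U^{T}FY(\vec{m}_{i})$, that is --- by Part~2 of Proposition~\ref{prop:models} and the Yoneda lemma --- a parallel pair $u_{i}, v_{i} \colon (n_{i}) \rightrightarrows \vec{m}_{i}$ in $\mathbb{T}$. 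Ranging over $i \in I$ this produces a family $\I^{\prime}$ of parallel pairs in $\mathbb{T}$; and since pushouts are unique up to unique isomorphism, the legs $k_{i}$ obtained from $\I^{\prime}$ via \eqref{eq:pushoutnew} are isomorphic to the given maps $\alpha_{i}$ under their common domain $FY(\vec{m}_{i})$, so that the family $(\I^{\prime})^{*}$ agrees with $\I$ up to isomorphism.

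Applying the direction already established to $\I^{\prime}$ then yields isomorphisms over $\Mod(\mathbb{T})$
\[
\Mod(\mathbb{T}_{\I^{\prime}}) \;\cong\; \Mod(\mathbb{T}, \I^{\prime}) \;\cong\; \Ainj((\I^{\prime})^{*}) \;\cong\; \Ainj(\I),
\]
the final step because replacing the members of a family by isomorphic maps (under their domains) produces an isomorphic category of algebraic injectives over the base. This realises the given second-class functor $\Ainj(\I) \to \Mod(\mathbb{T})$ as one of the first class, and so completes the argument.

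The one genuinely fiddly point --- the main, if modest, obstacle --- is the bookkeeping hidden in the phrase ``up to isomorphism'': one must check that the identification of pushout presentations of $\alpha_{i}$ with parallel pairs, and the resulting comparison $(\I^{\prime})^{*} \cong \I$, really do assemble into isomorphisms of categories that commute strictly with the forgetful functors down to $\Mod(\mathbb{T})$ and that do not depend on the chosen pushout objects $FY(\vec{m}_{i})_{u_{i} \simeq v_{i}}$. This is immediate once one records, exactly as in the argument above, that the bijection between extensions of a $\mathbb{T}$-model $X$ along $\alpha_{i}$ (equivalently along $k_{i}$) and liftings of the parallel pair $(u_{i}, v_{i})$ is natural in $X$ and in $\mathbb{T}$-model morphisms.
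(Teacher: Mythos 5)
Your proposal is correct and follows essentially the same route as the paper: the forward direction is the discussion preceding the statement combined with Proposition~\ref{prop:TImodels}, and the reverse direction is obtained by unwinding a pushout presentation \eqref{eq:pushoutnew} of each $\alpha_i$ — via the adjunction $F \dashv U^{T}$, the coequaliser presentation of $S(n)$, and the Yoneda lemma — to recover a family of parallel pairs in $\mathbb T$, which is exactly the ``chasing backwards through the above constructions'' that the paper's proof invokes. Your additional remarks on independence of the chosen pushout objects and on strict commutation over $\Mod(\mathbb T)$ are accurate and fill in the bookkeeping the paper leaves implicit.
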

\begin{proof}
The reverse direction simply involves chasing backwards through the above constructions.
\end{proof}

\subsection{Iterated algebraic injectivity}

Let $I_1\subseteq Arr(\C)$ and $I_2 \subseteq Arr(Inj(\I_1))$ be a set of morphisms in the (non-algebraic) injectivity class $Inj(\I_1)$.  Then $Inj(\I_2)$ is an injectivity class in $\C$ itself --- namely $Inj(\I_1 \cup \I_2)$ --- and so iterating injectivity produces nothing new.  

On the other hand \emph{iterating algebraic injectivity} produces categories that cannot be obtained in a single step.  In fact, the case of interest to us --- Grothendieck weak $\omega$-groupoids --- uses $\omega$ iterations.  We give the definition in its natural generality.

\begin{Definition}
Let $\lambda$ be an ordinal.  By a tower of (algebraic) $\lambda$-injectives over $\C$ we mean a cochain $\Ainj(\I):(\lambda^{+})^{op} \to \CAT$ such that 
\begin{enumerate}
\item $\Ainj(\I_{0})=\C$;
\item For each $n < \lambda$ the link map $V^{n+1}_{n}:\Ainj(\I_{n+1}) \to \Ainj(\I_{n})$ is the forgetful functor from the category of algebraic injectives determined by a given family of maps $\I_{n+1}$ of $\Ainj(\I_{n})$.
\item the cochain is smooth: that is, at a limit ordinal $\gamma \leq \lambda$, we have $\Ainj(\I_{\gamma}) = lim_{n < \gamma} \Ainj(\I_{n})$.
\end{enumerate}
The value of the cochain at $0 < \lambda \in \lambda^{+}$ gives the forgetful functor $V^{\lambda}_{0}:\Ainj(\I_{\lambda}) \to \C$ from the \emph{category of $\lambda$-injectives}.
\end{Definition}

Thus $1$-injectives are algebraic injectives in $\C$ whilst $2$-injectives are algebraic injectives in algebraic injectives, and so on.\begin{footnote}{We use the term $\lambda$-injective as opposed to algebraic $\lambda$-injective since there is no meaningful non-algebraic version of $\lambda$-injectivity beyond the case $\lambda=1$.}\end{footnote}  By the general term iterated algebraic injectives we mean $\lambda$-injectives for some $\lambda$.

As mentioned, the primary structures of interest here are $\omega$-injectives.  A tower of $\omega$-injectives is specified by a diagram

\begin{equation}\label{eq:Iterated}
\cd{ \Ainj(\I)_{\omega} \ar[d]_{V^{\omega}_{n+1}} \ar[dr]^{V^{\omega}_{n}} \ar[drrrr]^{V^{\omega}_{0}} \\
\cdots \Ainj(\I_{n+1}) \ar[r]_<<<<{V^{n+1}_{n}} & \Ainj(\I_{n}) \ar[r] & \cdots \ar[r] & \Ainj(\I_{1}) \ar[r]_<<<<{V^{1}_{0}} & \C}
\end{equation}
in which $\Ainj_{\omega}(\I)$ is the limit of the sequence $\Ainj(\I_{n})_{n < \omega}$.
\begin{Examples}
All locally presentable categories are categories of iterated algebraic injectives in a power of $\Set$.  This is established in the following sequence of examples.
\begin{enumerate}
\item If $\G$ is a directed graph with set of objects $O$ and directed edges $E$ then the presheaf category $\Set^{\G}$ is a category of algebraic injectives $\Ainj(E)$ in $\Set^{O}$ where $E \to Arr(\Set^{O})$ sends $f:n \to m \in E$ to the coproduct inclusion $O(n,-) \to O(n,-) + O(m,-)$.
\item If $\C$ is a small category with underlying graph $U\C$ then it is easy to see that $[\C,\Set]$ is a small orthogonality class in $[U\C,\Set]$.  By Remark 4.4 of \cite{Adamek1994Locally}, each small orthogonality class in $[U\C,\Set]$ is a small injectivity class --- indeed, also a category of algebraic injectives in $[U\C,\Set]$ (since the liftings are, in this case, forced to be unique.)
\item Since each locally presentable category is a small orthogonality class (thus a category of algebraic injectives) in a presheaf category, we conclude --- on combining these examples --- that each locally presentable category is a category of $3$-injectives in a power of $\Set$.
\end{enumerate}
\end{Examples}

Indeed the categories of iterated algebraic injectives in powers of $\Set$ are precisely the locally presentable categories.  If we only allow in each $\I_{n}$ morphisms with finitely presentable domains we get precisely the locally finitely presentable categories.  These claims follows from the following result.

\begin{Proposition}\label{prop:update}
Let $\Ainj(\I):(\lambda^{+})^{op} \to \CAT$ be a tower of $\lambda$-injectives over $\C$.  If $\C$ is locally presentable then each category $\Ainj(\I_{n})$ in the tower is locally presentable and each connecting map $V^{n}_{m}:\Ainj(\I_{n}) \to \Ainj(\I_{m})$ an accessible right adjoint.   (If $\C$ is locally finitely presentable and each $\I_{n}$ consists of morphisms with finitely presentable domains then each $\Ainj(\I_{n})$ is locally finitely presentable and each connecting map a finitary right adjoint.)
\end{Proposition}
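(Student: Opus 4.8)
The plan is to prove the statement by transfinite induction on the length of the tower, using Proposition~\ref{prop:lp} as the successor step and the stability of local presentability under limits of accessible functors at limit stages. Concretely, I would show by induction on $n \leq \lambda$ that $\Ainj(\I_{n})$ is locally presentable and that for every $m < n$ the connecting functor $V^{n}_{m}$ is an accessible right adjoint (the finitary refinement being proved simultaneously, by the same argument but tracking the size of presentable domains).

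\emph{Successor step.} Suppose $\Ainj(\I_{n})$ is locally presentable. By construction $V^{n+1}_{n}:\Ainj(\I_{n+1}) \to \Ainj(\I_{n})$ is the forgetful functor $V:\Ainj(\I_{n+1}) \to \Ainj(\I_{n})$ from the algebraic injectives for the family $\I_{n+1}$ of maps in $\Ainj(\I_{n})$, so Proposition~\ref{prop:lp} applies verbatim: $\Ainj(\I_{n+1})$ is locally presentable and $V^{n+1}_{n}$ is an accessible (strictly monadic) right adjoint. For $m < n$ we have $V^{n+1}_{m} = V^{n}_{m} \circ V^{n+1}_{n}$, a composite of accessible right adjoints, hence an accessible right adjoint. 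In the finitely presentable variant, the same proposition gives that $\Ainj(\I_{n+1})$ is locally finitely presentable and $V^{n+1}_{n}$ finitary provided the maps of $\I_{n+1}$ have finitely presentable domains; since these domains are objects of $\Ainj(\I_{n})$, which by the inductive hypothesis is locally finitely presentable, this hypothesis is exactly the one imposed in the statement, and composites of finitary right adjoints are again finitary.

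\emph{Limit step.} Let $\gamma \leq \lambda$ be a limit ordinal, so $\Ainj(\I_{\gamma}) = \lim_{n < \gamma} \Ainj(\I_{n})$ by smoothness, the limit being taken over the cochain of the $V^{n}_{m}$. By the inductive hypothesis each $\Ainj(\I_{n})$ for $n < \gamma$ is locally presentable and each transition functor accessible; a limit of a small diagram of locally presentable categories and accessible functors is accessible, and in fact locally presentable --- this is standard (e.g.\ Bird's theorem / \cite{Adamek1994Locally}, since one may present the limit as a category of models or as an inserter-type construction which preserves local presentability). Moreover each leg $V^{\gamma}_{n}$ of the limit cone is accessible, and it is a right adjoint: one can either invoke the adjoint functor theorem (it is an accessible functor between locally presentable categories that preserves limits, as limits in the limit category are computed legwise and each $V^{n}_{m}$ preserves them), or construct the left adjoint directly from the left adjoints to the $V^{n}_{m}$ by the usual colimit formula along the cochain. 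For $m < \gamma$, $V^{\gamma}_{m} = V^{n}_{m} \circ V^{\gamma}_{n}$ for any $m < n < \gamma$, again a composite of accessible right adjoints. The finitary refinement at a limit ordinal is identical, using that a limit of locally finitely presentable categories and finitary right adjoints is locally finitely presentable with finitary legs.

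\emph{Main obstacle.} The successor step is immediate from Proposition~\ref{prop:lp}, so the only real content is the limit step: namely that a (codirected, or more general cochain) limit of locally presentable categories along accessible right adjoints is again locally presentable with the legs accessible right adjoints, and the analogous statement in the finitely presentable case. This is a known result about $2$-categorical limits of accessible categories, but it must be cited carefully --- the cochain here is indexed by an ordinal rather than a general poset, which is harmless, but one should be sure the cited version handles the limit-ordinal stacking (iterated limits of limits) and, for the lfp refinement, tracks the finitary rank. I would phrase this step as an appeal to the closure of the $2$-category of locally presentable categories and accessible right adjoints under small limits, supplying a reference, rather than reproving it.
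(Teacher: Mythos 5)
Your induction scheme and successor step match the paper's proof exactly; the issue is in the limit step. The closure theorem you want to invoke (Bird's Theorem 2.18, or the corresponding statements in Ad\'amek--Rosick\'y) concerns \emph{bilimits} (equivalently pseudo-limits) in $\CAT$, whereas the smoothness clause in the definition of a tower asserts that $\Ainj(\I_{\gamma})$ is the \emph{strict} $1$-categorical limit of the cochain. Strict cochain limits of locally presentable categories along accessible right adjoints need not be locally presentable: take $\C_{n}$ to be the codiscrete groupoid on the set $\{n,n+1,\dots\}$ with transition functors the evident inclusions; each $\C_{n}$ is equivalent to the terminal category and each transition functor is an equivalence, yet the strict limit is the empty category, which is not locally presentable. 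So the assertions that ``limits in the limit category are computed legwise'' and that one may simply appeal to closure under small limits are not justified as stated, and the adjoint-functor-theorem route for the legs $V^{\gamma}_{n}$ founders for the same reason.

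The paper closes exactly this gap by first proving (Propositions~\ref{prop:FibrantExamples} and~\ref{prop:bilimit} of the appendix) that the relevant cochains are smooth and \emph{isofibrant} --- each forgetful functor $V:\Ainj(\I)\to\C$ is an isofibration because algebraic-injective structure transports uniquely along isomorphisms --- and that the strict limit of a smooth isofibrant cochain is a bilimit; only then does Bird's Theorem 2.18 (and 2.17 for the finitary refinement) apply. Your proposal needs this intermediate step, or some equivalent device identifying the strict limit with a $2$-categorical one, before the limit case goes through; once it is inserted, the remainder of your argument coincides with the paper's.
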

\begin{proof}
We prove the statement by transfinite induction on $n \leq \lambda$.  The statement $n=0$ is our assumption that $\C$ is locally presentable.  For $n = m+1$ we have that $\Ainj(\I_{m})$ is locally presentable and $V^{m}_{k}$ an accessible right adjoint for each $k < m$.  By Proposition~\ref{prop:lp} then $\Ainj(\I_{m+1})$ is locally presentable and $V^{m+1}_{m}$ an accessible right adjoint --- since each $V^{m+1}_{k} = V^{m}_{k} \circ V^{m+1}_{m}$ is then a composite of accessible right adjoints the claim holds.  For $n$ a limit ordinal the limit $\Ainj(\I_{n}) = \Ainj(\I_{m})_{m < n}$ is, by Propositions~\ref{prop:FibrantExamples} and \ref{prop:bilimit}, a bilimit.  Since, by Theorem 2.18 of \cite{Bird}, the 2-category of locally presentable categories and accessible right adjoints is closed under bilimits in $\CAT$, the claim follows.  The locally finitely presentable case follows similarly from the locally finitely presentable part of Proposition~\ref{prop:lp} and Theorem 2.17 of \cite{Bird}.
\end{proof}

We write $V^{n}_{m}:\Ainj(\I_{n}) \to \Ainj(\I_{m})$ for the composite forgetful functor where $m < n \in \lambda^{+}$ and $F^{n}_{m} \dashv V^{m}_{n}$ for the adjoints.  In the case $m=0$ we write $V_{n}=V^{n}_{0}:\Ainj(\I_{n}) \to \C$ for the composite and $F_{n}=F^{n}_{0}$ for its left adjoint. 

The examples that we are interested in will arise from a more specialised context, in which we are provided with the data of a triple $(\C,\A, \B)$ consisting of a locally presentable category $\C$, a set $\A \subseteq Ob(\C)$ of objects and a family $\B$ of morphisms of $\C$.

Now consider pushouts in $\Ainj(\I_{n})$ of the form

\begin{equation}\label{eq:pushout}
\cd{
F_{n}A \ar[d]_{F_{n}\alpha} \ar[r]^{u} & F_{n}X \ar[d]^{p}\\
F_{n}B \ar[r]_{q} & P}
\end{equation}
where $\alpha:A \to B \in |\B|$ and $X \in \A$.  

\begin{Definition}
The tower $\Ainj(\I):(\lambda^{+})^{op} \to \CAT$ is a tower of \emph{$(\A,\B)$-iterated algebraic injectives} if 
\begin{quote}
(\dag) for $n < \lambda$ each morphism of $|\I_{n+1}|$  is the pushout of a morphism $F_{n}\alpha:F_{n}A \to F_{n}B$ with $\alpha \in |\B|$ along a morphism $u:F_{n}A \to F_{n}X$ with $X \in A$.
\end{quote}
\end{Definition}

\begin{Proposition}\label{prop:fpobs}
Let $\Ainj(\I):(\lambda^{+})^{op} \to \CAT$ be a tower of $(\A,\B)$-iterated algebraic injectives.  If each object of $\A$ is finitely presentable and 
the family $\B$ consists only of morphisms with finitely presentable domains, then each family $\I_n$ has the same property.  
\end{Proposition}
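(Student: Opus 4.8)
The plan is to prove the statement by transfinite induction on $n\le\lambda$, carrying along at each stage the two auxiliary facts that $\Ainj(\I_n)$ is locally finitely presentable and that the forgetful functor $V_n:\Ainj(\I_n)\to\C$ is finitary, and deducing from these that the next family $\I_{n+1}$ (when it is defined) consists of morphisms with finitely presentable domains. The base case $n=0$ is immediate: $\Ainj(\I_0)=\C$ is locally finitely presentable by hypothesis and $V_0$ is the identity.

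For the inductive step, suppose that the families $\I_1,\dots,\I_n$ occurring in the initial segment of the tower have all been shown to consist of morphisms with finitely presentable domains. Then this initial segment is itself a tower of $(\A,\B)$-iterated algebraic injectives meeting the hypotheses of the locally finitely presentable clause of Proposition~\ref{prop:update}, so $\Ainj(\I_n)$ is locally finitely presentable and $V_n:\Ainj(\I_n)\to\C$ is a finitary right adjoint. Now the left adjoint of a finitary right adjoint preserves finitely presentable objects: for a filtered diagram $D$ one has $\Ainj(\I_n)(F_nX,\mathrm{colim}\,D)\cong\C(X,V_n\,\mathrm{colim}\,D)\cong\C(X,\mathrm{colim}\,V_nD)\cong\mathrm{colim}\,\C(X,V_nD)\cong\mathrm{colim}\,\Ainj(\I_n)(F_nX,D)$, using that $X$ is finitely presentable in $\C$. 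Hence $F_nX$ is finitely presentable in $\Ainj(\I_n)$ for every $X\in\A$. If $n<\lambda$, condition (\dag) tells us that each morphism of $|\I_{n+1}|$ is the pushout $p:F_nX\to P$ of a morphism $F_n\alpha:F_nA\to F_nB$ with $\alpha\in|\B|$ along a morphism $u:F_nA\to F_nX$ with $X\in\A$, as in the square \eqref{eq:pushout}; its domain is $F_nX$, which we have just seen is finitely presentable. Thus $\I_{n+1}$ consists of morphisms with finitely presentable domains, completing the successor step. At a limit ordinal $n$ there is no new family $\I_n$ to treat, so only the two bookkeeping facts about $\Ainj(\I_n)$ and $V_n$ need to be propagated, and these are supplied directly by Proposition~\ref{prop:update} applied to the initial segment.

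I do not expect a genuine obstacle here; the only point needing care is that the locally finitely presentable clause of Proposition~\ref{prop:update} takes as a hypothesis exactly the conclusion we are after, so it cannot be invoked for the whole tower at once. This is precisely why the argument is organised as an induction in which the proposition is applied only to the already-verified initial segments.
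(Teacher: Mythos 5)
Your proof is correct and follows essentially the same route as the paper: observe from the pushout square \eqref{eq:pushout} that each member of $\I_{n+1}$ has domain $F_nX$ with $X\in\A$, reduce the claim to $F_n$ preserving finitely presentable objects and hence to $V_n$ being finitary, and establish the latter by a transfinite induction interleaved with the statement being proved (the paper compresses this last step to ``arguing as in Proposition~\ref{prop:update}''). Your explicit handling of the apparent circularity --- applying the locally finitely presentable clause of Proposition~\ref{prop:update} only to already-verified initial segments --- is exactly the intended reading.
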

\begin{proof}
Inspecting the square \eqref{eq:pushout} we see that each morphism in $\I_n$ has source a free $n$-injective on a finitely presentable object.
Therefore it suffices to show that each $F_{n}:\C \to \Ainj(\I_n)$ preserves finitely presentable objects and, for this, it suffices to show that each right adjoint $V_{n}:\Ainj(\I_n) \to \C$ preserves filtered colimits.  This follows by transfinite induction, arguing as in Proposition~\ref{prop:update}.
\end{proof}

\begin{Example}\label{example:assmagma}
Let $\mathbb F \hookrightarrow \Set$ consist of the finite cardinals $0,1,2,\ldots $ and $\B=\{!:0 \to 1,!:2 \to 1\}$.  From Example~\ref{example:magma} $V:\Ainj(\iota:2 \to 3) \to \Set$ is the concrete category of magmas.  Observe that we have a pushout square as below left.
\begin{equation*}
\xymatrix{
0 \ar[d]_{!} \ar[r]^-{} & 2 \ar[d]^{\iota} && F2 \ar[d]_{F!} \ar[r]^-{(t,s)} & F3 \ar[d]^{} \\
1 \ar[r]^-{} & 3 && F1 \ar[r]^-{} & F3/\langle t = s \rangle
}
\end{equation*}
Let $F$ be the left adjoint of $V$.  The terms $t=(xy)z, s=x(yz)$ in $3$ variables are elements of $VF3$.  Together, they correspond to a single map $\langle t,s \rangle: F2 \to F3$. By the universal property of the pushout $F3/\langle t = s \rangle$ each morphism $(a,b,c):F3 \to A$ factors through $F3 \to F3/\langle t = s \rangle$ just when the equation $(ab)c=a(bc)$ holds for all for $a,b,c \in A$.  Thus we see that the category of associative magmas is a category of $(\mathbb F,\B)$-2-injectives.  

Generalising this construction in the obvious way, each equational variety $(\Omega,E)\textnormal{-Alg}$ is naturally a category of $(\mathbb F,\B)$-$2$-injectives.   Conversely, each category of $(\mathbb F,\B)$-iterated algebraic injectives is an equational variety.  One can prove this using sifted colimits and Beck's theorem.  A better argument, not using monadicity theorems, is provided by Theorem~\ref{thm:nervous}.
\end{Example}

Recall the category $\Theta_0$ of globular cardinals, and the set of morphisms $\B=\{S(n) \to Y(n+1):n \in \mathbb N\}$ including the boundaries of representable globular sets.  The main result of this section characterises models of cellular globular theories as $(\Theta_{0},\B)$-$\omega$-injectives.  We will state and prove this result in terms of cochains.

Both the towers of models of a cellular theory and of $\omega$-injectives form $\omega^{+}$-cochains in $\CAT$ where $\omega^{+} = \{0 <1,\ldots < n < \ldots < \omega\}$.  These cochains share the properties of being smooth and \emph{isofibrant}.  Recall that a functor $W:\A \to \B$ is said to be an isofibration if given $A \in \A$ and an isomorphism $f:B \to WA \in \B$ there exists an isomorphism $f^{\prime}:B^{\prime} \to A$ with $Wf^{\prime}=f$.  Now a cochain $X$ is said to be isofibrant if each $X_{n} \to X_{m}$ for $m < n$ is an isofibration of categories.  

By an \emph{equivalence} $F:X \to Y$ of cochains in $\CAT$ we shall mean a natural transformation between cochains for which $F_n:X_n \to Y_n$ is an equivalence of categories for each $n$.  If we restrict our attention to smooth isofibrant cochains then this yields the \emph{correct} notion --- in particular, it yields an \emph{equivalence relation} on smooth isofibrant cochains (see Appendix~\ref{sect:cochains} for a proof).


\begin{Theorem}\label{thm:main}
Up to equivalence of $\omega^{+}$-cochains, the towers of models of cellular globular theories and towers of $(\Theta_0,\B)$-$\omega$-injectives coincide.
\end{Theorem}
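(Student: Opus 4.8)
The plan is to build a mutual translation between the two kinds of towers, step by step up the $\omega$-chain, using Proposition~\ref{prop:isomorphism} as the engine at each successor stage and the smoothness/isofibrancy hypotheses to pass through the limit stage $\omega$. Concretely, given a cellular globular theory $\mathbb T$ presented by families $\I_{n+1}$ of parallel pairs in $\mathbb T_n$, I would construct inductively a tower of $(\Theta_0,\B)$-$\omega$-injectives together with equivalences $P_n:\Mod(\mathbb T_n)\to\Ainj(\I^\star_n)$ commuting (up to coherent isomorphism) with the forgetful functors to $\Mod(\Theta_0)\simeq[\mathbb G^{op},\Set]$. The base case $n=0$ is the identification $\Mod(\Theta_0)\simeq[\mathbb G^{op},\Set]$, for which one takes $\A=\Theta_0$ (the globular cardinals, viewed via $Y$) and $\C=[\mathbb G^{op},\Set]$; indeed every representable $Y\vec n$ is finitely presentable so the hypotheses of Propositions~\ref{prop:fpobs} and \ref{prop:update} apply. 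For the successor step, Proposition~\ref{prop:isomorphism} gives an isomorphism $\Mod(\mathbb T_{n+1})=\Mod((\mathbb T_n)_{\I_{n+1}})\cong\Ainj(\I^\star_{n+1})$ over $\Mod(\mathbb T_n)$, where $\I^\star_{n+1}$ is the family of pushouts \eqref{eq:pushoutnew} of maps $F_n j_{n_i}:F_n(Sn_i)\to F_n(Y(n_i+1))$; since $j_n:S(n)\hookrightarrow Y(n+1)$ is exactly a member of $\B$ and $Sn_i,Y(n_i+1)$ are globular cardinals in $\A$, condition (\dag) is satisfied, so the extended tower is again one of $(\Theta_0,\B)$-$\omega$-injectives. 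Transporting $\I^\star_{n+1}$ along the equivalence $P_n$ (using that equivalences of locally presentable categories, being right adjoints, carry free functors to free functors and preserve the pushouts \eqref{eq:pushout}) produces the desired family over $\Ainj(\I^\star_n)$ and the equivalence $P_{n+1}$.

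The limit stage is where the cochain language earns its keep. Both $\Mod(\mathbb T)=\Mod(\mathbb T_\omega)$ and $\Ainj_\omega(\I)$ are defined as the limits in $\CAT$ of their respective truncated cochains (for the models side this is recorded in the paragraph preceding the Faithfulness Conjecture, citing Section 5.3 of \cite{Bourke2018Monads}; for the injectives side it is clause (3) of the definition of a tower). Having built a levelwise equivalence $P_{<\omega}:\Mod(\mathbb T_{<\omega})\to\Ainj(\I^\star_{<\omega})$ of the truncated smooth isofibrant cochains, I invoke the fact --- proved in Appendix~\ref{sect:cochains} and used implicitly throughout this section --- that a levelwise equivalence between smooth isofibrant cochains induces an equivalence on limits; this yields $P_\omega:\Mod(\mathbb T)\to\Ainj_\omega(\I)$ over $\C$ and completes one direction. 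Isofibrancy of the model tower follows because each $U^{n+1}_n$ is monadic (hence an isofibration), and isofibrancy of the injectives tower because each $V^{n+1}_n$ is strictly monadic by Proposition~\ref{prop:lp}; smoothness is built into the constructions on both sides.

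For the converse direction I run the same argument backwards: given a tower of $(\Theta_0,\B)$-$\omega$-injectives, condition (\dag) says each $\I_{n+1}$ consists of pushouts of $F_n\alpha$ with $\alpha\in\B=\{j_n\}$ along maps $F_n X\to F_n A$ with $X,A\in\Theta_0$; by the other direction of Proposition~\ref{prop:isomorphism} (``chasing backwards through the constructions'') each such map $u:F_n(Sn_i)\to F_n(Y\vec m_i)$, precomposed appropriately, corresponds under the endomorphism-theory description to a parallel pair $u_i,v_i:(n_i)\rightrightarrows\vec m_i$ in the endomorphism theory, which by the universal property of adjoining fillers determines a family $\I_{n+1}$ in $\mathbb T_n$ with $(\mathbb T_n)_{\I_{n+1}}\simeq\mathbb T_{n+1}$ on the level of model categories. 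One then assembles the $\mathbb T_n$ into a chain of globular theories whose colimit $\mathbb T$ is cellular by definition, and the levelwise equivalence together with the cochain-limit lemma again identifies the two towers at level $\omega$.

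The main obstacle I anticipate is \emph{coherence of the transport across the equivalences $P_n$} --- that is, verifying that replacing $\Mod(\mathbb T_n)$ by the equivalent $\Ainj(\I^\star_n)$ really does send the family $\I^\star_{n+1}$ to a family satisfying (\dag) with the \emph{same} $\A=\Theta_0$ and $\B$, rather than merely up to some opaque equivalence of families. This requires knowing that the free functors $F_n$ and the pushouts \eqref{eq:pushout} are preserved on the nose (or compatibly) along $P_n$, and that the square \eqref{eq:pushoutnew} produced by Proposition~\ref{prop:isomorphism} can be taken with source literally $F_n j_{n_i}$ with $j_{n_i}\in\B$; this is the content of the careful bookkeeping in the paragraphs preceding Proposition~\ref{prop:isomorphism}, and making it precise through $\omega$ successive transports is the delicate part. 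A secondary subtlety is confirming that the two ad hoc limit constructions (limit in $\CAT$ for models, limit in $\CAT$ for injectives) agree as smooth isofibrant cochains, so that the appendix's equivalence-of-limits result genuinely applies; since both are honest $\CAT$-limits of isofibrant smooth diagrams this should be routine, but it must be said.
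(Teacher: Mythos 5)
Your proposal matches the paper's proof essentially step for step: an inductive construction of the levelwise equivalence using Proposition~\ref{prop:isomorphism} at each successor stage, the bilimit property of limits of smooth isofibrant cochains (Appendix~\ref{sect:cochains}) at stage $\omega$, and the reverse direction obtained by chasing backwards through the same constructions. The coherence obstacle you flag is resolved in the paper exactly as you anticipate: the transported family has domains of the form $E_{n}F^{T_{n}}X$ rather than $F_{n}X$, and one simply replaces it by the isomorphic family obtained from the natural isomorphism $F_{n}X \cong E_{n}F^{T_{n}}X$, which then satisfies (\dag) on the nose.
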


\begin{proof}
Consider the tower of models of a cellular globular theory $\mathbb T$.  We will construct the equivalent tower of $\omega$-injectives inductively.  For the base case we use the canonical equivalence $E_{0}=U^{\Theta_0}:\Mod(\mathbb T_0) = \Mod(\Theta_{0}) \to [\mathbb G^{op},\Set]$.  Now suppose that we have a morphism of chains of length $n$ as below
\begin{equation*}\label{eq:sequence}
\cd{
\Mod(\mathbb T_{n}) \ar[d]_{E_{n}} \ar[r] & \cdots \ar[r] & \Mod(\mathbb T_{1}) \ar[r] \ar[d]^{E_{1}} & \Mod(\mathbb T_{0}) \ar[d]^{E_{0}} \\
\Ainj(\I_{n},\alpha_n) \ar[r] & \cdots \ar[r] & \Ainj(\I_{1},\alpha_1) \ar[r] & [\mathbb G^{op},\Set]
}
\end{equation*}
in which each $E_{j}$ is an equivalence and each family $\I_{m}$ satisfies (\dag) for $m \leq n$.  Since $\f T$ is cellular we have $\f T_{n+1}=(\f T_{n})_{\I_{n+1}}$.  Therefore by Proposition~\ref{prop:isomorphism} we have an isomorphism $\Mod(\mathbb T_{n+1}) \to \Ainj(\I_{n+1},j_{n+1})$ making the left triangle below commute.
\begin{equation*}
\cd{
Mod(\f T_{n+1}) \ar[dr]_{U^{n}_{n+1}} \ar[r]^{} & \Ainj(\I_{n+1},j_{n+1}) \ar[d] \ar[r]^{} & \Ainj(\I_{n+1},E_{n}j_{n+1}) \ar[d] \ar[r]^{} & \Ainj(\I_{n+1},\alpha_{n+1}) \ar[dl]^{V^{n}_{n+1}}\\
& Mod(\mathbb T_{n}) \ar[r]_{E_{n}} & \Ainj(K_{n})}
\end{equation*}
Here $\I_{n+1}$ is of the form described in \eqref{eq:indexed}.
Any equivalence of categories $W:\A \to \B$ lifts to an equivalence $\Ainj(S,\alpha) \to \Ainj(S,W\alpha)$ for $(S,\alpha)$ a family of morphisms in $\A$; applying this to $E_{n}$ and $(\I_{n+1},j_{n+1})$ yields the equivalence in the central square above.  The morphisms in the image of $Ej_{n+1}$ do not satisfy (\dag) on the nose --- in particular, they have domains of the form $E_{n}F^{T_{n}}X$ for $X$ a globular cardinal; however, composing these with the natural isomorphism $F_{n}X \cong E_{n}F^{T_{n}}X$ we obtain an isomorphic family $(\I_{n+1},\alpha_{n+1})$ satisfying (\dag) --- this induces an isomorphism as in the triangle above right.  We define $E_{n+1}$ to be the composite equivalence on the top row above.  Now since each $E_{n}$ is an equivalence and since the cofiltered limits $\Mod(\mathbb T)$ and $\Ainj_{\omega}(K)$ are \emph{bilimits} (see Propositions~\ref{prop:FibrantExamples} and \ref{prop:bilimit} of Appendix~\ref{sect:cochains}) the induced map $E:\Mod(\mathbb T) \to \Ainj_{\omega}(K)$ is an equivalence.  This gives the desired equivalence of cochains.  The reverse direction is similar.
\end{proof}

The tower-free version is an immediate consequence.

\begin{Theorem}
Up to equivalence over $[\mathbb G^{op},\Set]$, the categories of models of cellular globular theories and of $(\Theta_{0},\B)$-$\omega$-injectives coincide.  
\end{Theorem}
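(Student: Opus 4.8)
The plan is to obtain this as a routine consequence of Theorem~\ref{thm:main} by passing to the top vertices of the two $\omega^{+}$-cochains. First I would recall that both cochains involved --- the tower of models $(\Mod(\mathbb T_{n}))_{n}$ of a cellular theory $\mathbb T$, and any tower $(\Ainj(\I_{n}))_{n}$ of $(\Theta_{0},\B)$-iterated algebraic injectives over $[\mathbb G^{op},\Set]$ --- are smooth and isofibrant, and that their top vertices $\Mod(\mathbb T)=\lim_{n}\Mod(\mathbb T_{n})$ and $\Ainj_{\omega}(\I)=\lim_{n}\Ainj(\I_{n})$ are, by Propositions~\ref{prop:FibrantExamples} and~\ref{prop:bilimit}, not merely strict limits but bilimits in $\CAT$. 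The categorical input I would then use is that bilimits are invariant under equivalence of diagrams: an equivalence $E\colon X\to Y$ of smooth isofibrant $\omega^{+}$-cochains induces an equivalence $E_{\infty}\colon\lim X\to\lim Y$ on top vertices which is compatible, up to natural isomorphism, with the projections to each level; in particular the square
\begin{equation*}
\cd{
\lim X \ar[d]_{E_{\infty}} \ar[r]^-{p^{X}_{0}} & X_{0} \ar[d]^{E_{0}} \\
\lim Y \ar[r]^-{p^{Y}_{0}} & Y_{0}
}
\end{equation*}
commutes up to isomorphism. This is essentially the content of Appendix~\ref{sect:cochains}; nothing globular enters.

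Next I would feed in the cochain equivalence produced by Theorem~\ref{thm:main}. Recall from its proof that the level-$0$ component of that equivalence is the canonical equivalence $E_{0}=U^{\Theta_{0}}\colon\Mod(\Theta_{0})\to[\mathbb G^{op},\Set]$ (and, in the reverse direction, a chosen pseudo-inverse of it). On the side of a cellular theory $\mathbb T$, the composite $\Mod(\mathbb T)\xrightarrow{U^{\omega}_{0}}\Mod(\Theta_{0})\xrightarrow{E_{0}}[\mathbb G^{op},\Set]$ is, by the definition of $U^{T}$ as restriction along $(J\circ D)^{op}$, exactly the forgetful functor $U^{T}$; on the side of $\omega$-injectives, the level-$0$ vertex is literally $\C=[\mathbb G^{op},\Set]$ and the level-$0$ projection is the forgetful functor $V^{\omega}_{0}$. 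So the displayed square specialises to the statement that the induced equivalence $E_{\infty}\colon\Mod(\mathbb T)\to\Ainj_{\omega}(\I)$ satisfies $V^{\omega}_{0}\circ E_{\infty}\cong U^{T}$, i.e.\ it is an equivalence over $[\mathbb G^{op},\Set]$. Running the same argument on the reverse direction of Theorem~\ref{thm:main} yields, for each tower of $(\Theta_{0},\B)$-$\omega$-injectives, a cellular globular theory $\mathbb T$ together with an equivalence $\Ainj_{\omega}(\I)\to\Mod(\mathbb T)$ over $[\mathbb G^{op},\Set]$; combining the two directions gives the claimed coincidence of classes.

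I do not expect a genuine obstacle --- the substantive work is all in Theorem~\ref{thm:main}, and what remains is bookkeeping. The one point that needs care is checking that ``up to equivalence of $\omega^{+}$-cochains'' is strong enough to deliver ``up to equivalence over $[\mathbb G^{op},\Set]$'' at the top vertex; this works precisely because the cochain equivalence built in Theorem~\ref{thm:main} has level-$0$ component the forgetful functor $U^{\Theta_{0}}$ (or a pseudo-inverse of it), so that compatibility with the level-$0$ projection translates directly into compatibility with the forgetful functors to globular sets. A secondary detail worth spelling out, if one wishes to be thorough, is that an equivalence of smooth isofibrant cochains really does induce an equivalence of bilimits compatibly with all projections --- but this is standard $2$-categorical limit theory and is in any case covered by the appendix.
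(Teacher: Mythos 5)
Your argument is correct and matches the paper's intent exactly: the paper simply declares this theorem ``an immediate consequence'' of Theorem~\ref{thm:main}, and your write-up supplies precisely the bookkeeping that phrase elides, namely that the level-$\omega$ component of the cochain equivalence is an equivalence of top vertices whose compatibility with the level-$0$ projections (where $E_{0}=U^{\Theta_{0}}$) is exactly compatibility with the forgetful functors to $[\mathbb G^{op},\Set]$. The only remark worth adding is that, since the paper's cochain equivalences are strict natural transformations and the $\omega^{+}$-cochain already contains the limit vertex as its value at $\omega$, the relevant square commutes on the nose, so the detour through bilimits of Appendix~\ref{sect:cochains} is not even needed here.
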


The above results mean that if we are only interested in the models of cellular globular theories, then we can equally work the semantic notion of $(\Theta_{0},\B)$-iterated algebraic injectives instead.  The following result refines this to deal with cellular contractible theories (aka coherators) whose models are, by definition, the categories of Grothendieck weak $\omega$-groupoids.  

\begin{Theorem}\label{thm:semantic}
Up to equivalence over $[\mathbb G^{op},\Set]$, the categories of models of coherators coincide with those categories of $(\Theta_0,\B)$-$\omega$-injectives having the property that each free $\omega$-injective on a globular cardinal has a contractible underlying globular set.
\end{Theorem}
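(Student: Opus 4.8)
The plan is to leverage Theorem~\ref{thm:main} to transfer the characterisation of coherators across the established equivalence of cochains, so that the only genuine work is recognising how the contractibility condition on the globular theory $\mathbb T$ corresponds to a condition on the associated $(\Theta_0,\B)$-$\omega$-injective. First I would recall from Remark~\ref{rk:contract} that a cellular globular theory $\mathbb T$ is a coherator precisely when, for each globular cardinal $\vec n \in \Theta_0$, the globular set $\mathbb T(JD-,\vec n):\mathbb G^{op}\to\Set$ is contractible. By Part~(2) of Proposition~\ref{prop:models}, this globular set is exactly the underlying globular set $U^{T}K_{T}(\vec n)$ of the free $\mathbb T$-model $\mathbb T(-,\vec n)$ on the globular cardinal $Y\vec n$; that is, $\mathbb T$ is a coherator if and only if every free $\mathbb T$-model on a globular cardinal has a contractible underlying globular set.

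Next I would invoke Theorem~\ref{thm:main}: the tower of models of $\mathbb T$ is equivalent, as an $\omega^{+}$-cochain, to a tower of $(\Theta_0,\B)$-$\omega$-injectives, and the equivalence $E:\Mod(\mathbb T)\to\Ainj_\omega(\I)$ commutes (up to isomorphism) with the forgetful functors to $[\mathbb G^{op},\Set]$. The free $\mathbb T$-model functor $F^{T}$ and the free $\omega$-injective functor $F_\omega$ are left adjoints to forgetful functors that are intertwined by $E$, so $E F^{T}\cong F_\omega$; composing with the forgetful functors to $[\mathbb G^{op},\Set]$, which agree under $E$, shows that $U^{T}F^{T}(\vec n)\cong V_\omega F_\omega D\vec n$ naturally in $\vec n$. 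Hence the underlying globular set of the free $\mathbb T$-model on the globular cardinal $Y\vec n$ is isomorphic to the underlying globular set of the free $\omega$-injective on the globular cardinal $\vec n$, and since contractibility of a globular set is invariant under isomorphism, one is contractible for all $\vec n$ if and only if the other is. Combined with the previous paragraph, this shows $\mathbb T$ is a coherator exactly when the corresponding $(\Theta_0,\B)$-$\omega$-injective category has the stated property; running Theorem~\ref{thm:main} in both directions then yields the claimed equivalence of categories over $[\mathbb G^{op},\Set]$.

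The main obstacle I anticipate is keeping the adjunctions and the identification $E F^{T}\cong F_\omega$ honest through the transfinite/inductive construction in the proof of Theorem~\ref{thm:main}: the equivalence there is built stage by stage, and at each stage the family of maps $\I_{n+1}$ is only isomorphic — not equal — to the one coming from \eqref{eq:indexed}, so I must check that the isomorphisms $F_{n}X\cong E_{n}F^{T_n}X$ used to rectify the domains are compatible with passage to the limit and with the forgetful functors down to $[\mathbb G^{op},\Set]$. This is where I would spend most of the care: verifying that the comparison of free functors is natural in the globular cardinal and stable under the cofiltered bilimit, so that the single statement ``free $\omega$-injective on $\vec n$ has contractible underlying globular set, for all $\vec n$'' is genuinely preserved. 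Everything else is a routine transport of structure along an equivalence, together with the already-recorded fact (Remark~\ref{rk:contract}) that contractibility of $\mathbb T$ is detected on underlying globular sets of representables.
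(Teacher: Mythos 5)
Your proposal is correct and follows essentially the same route as the paper: identify contractibility of $\mathbb T$ with contractibility of the underlying globular sets of the free $\mathbb T$-models on globular cardinals via Remark~\ref{rk:contract} and Proposition~\ref{prop:models}(2), then transfer this across the equivalence of Theorem~\ref{thm:main}, using that an equivalence commuting with the forgetful functors also commutes with their left adjoints up to isomorphism. The extra care you flag about the stagewise construction is not needed beyond what the tower-free statement of Theorem~\ref{thm:main} already provides.
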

\begin{proof}
Consider the category of models of a cellular theory $\mathbb T$ and the equivalence $E$ with a category of $(\Theta_{0},\B)$-$\omega$-injectives depicted below.
\begin{equation*}
\xymatrix{
\Mod(\mathbb T) \ar[dr]_{U^{T}} \ar[rr]^{E} && \Ainj_{\omega}(\I) \ar[dl]^{V_{\omega}} \\
& [\mathbb G^{op},\Set]}
\end{equation*}
Since $E$ commutes with the forgetful functors it also commutes with the left adjoints, up to isomorphism.  Thus the free $\omega$-injective on each globular cardinal will be contractible just when the corresponding fact holds for the cellular globular theory $\mathbb T$: the free $\mathbb T$-model on each globular cardinal is contractible.  So it remains to show that this last property holds just when $\mathbb T$ is contractible.  The free $\mathbb T$-model on a globular cardinal $\vec{m}$ is simply $\mathbb T(-,\vec{m}):\mathbb T^{op} \to \Set$ and this has underlying globular set the composite $\mathbb T(J \circ D-,\vec{m}):\mathbb G^{op} \to \Set$.  By Remark~\eqref{rk:contract} each such globular set is contractible just when $\mathbb T$ is so, as required.
\end{proof}

\begin{Remark}
One can tighten the correspondence of Theorem~\ref{thm:main} by working with \emph{concrete models} \cite{Bourke2018Monads} of a globular theory $\mathbb T$.  A concrete model of $\mathbb T$ consists of a pair $(X,A)$ where $X$ is a globular set and $A:\mathbb T^{op} \to \Set$ an extension of $[\mathbb G^{op},\Set](Y-,X):\Theta_{0}^{op} \to \Set$ along $J^{op}$.  The assignment $(X,A) \mapsto A$ forms the object part of an equivalence $\Mod(\mathbb T)_{c} \to \Mod(\mathbb T)$ between the categories of concrete and of ordinary models.  Thus the two categories are essentially the same --- however there is an isomorphism $\Mod(\Theta_0)_{c} \cong [\mathbb G^{op},\Set]$ rather than just an equivalence.  A consequence is that the towers of concrete models of globular theories coincide, up to \emph{isomorphism} of $\omega^{+}$-cochains, with the towers of $(\Theta_0,\B)$-$\omega$-injectives.  
\end{Remark}


\section{Free iterated algebraic injectives and cellularity}

For $\C$ a cofibrantly generated model category with generating trivial cofibrations $\I$ one can define the category of \emph{algebraically fibrant objects} over $\C$ as $V:\Ainj(\I) \to \C$.  In Theorem 2.20 of \cite{Nikolaus2011Algebraic} Nikolaus showed that if 
\begin{itemize}
\item each trivial cofibration is a monomorphism and
\item each morphism in $\I$ has finitely presentable domain
\end{itemize}
then the model structure on $\C$ can be right induced along $V$ to a model structure on $\Ainj(\I)$ for which the adjunction $F \dashv V$ is a Quillen equivalence.\begin{footnote}{In fact, if $\C$ is combinatorial, neither condition is required --- see Theorem 15 of \cite{Bourke2017Equipping}.  However for our present purposes we make use of these conditions and follow Nikolaus' original arguments closely.}\end{footnote}

The main point in establishing the model structure is to show that in pushouts of the form
\begin{equation*}
\cd{
FA \ar[d]_{F\alpha} \ar[r] & X \ar[d]^{f}  \\
FB \ar[r] & Y
}
\end{equation*}
with $\alpha \in \I$ the map $Vf:VX \to VY$ is a weak equivalence.  That the resulting Quillen adjunction is a Quillen equivalence follows on showing that the unit
$$\eta_{X}:X \to VFX$$
is a weak equivalence for all $X$.  Nikolaus achieved these two results by giving explicit, closely related, constructions of such pushouts and of free algebraically fibrant objects.  In fact, Nikolaus did more than this --- showing in fact that the sought for weak equivalences are \emph{trivial cofibrations} --- even \emph{$\I$-cellular maps}.

This last point --- a minor one at first appearance --- obtains particular importance for us.  The reason is that we will not be working with model structures and their weak equivalences, but simply a set of morphisms (or, if one prefers, the cofibrantly generated weak factorisation system they determine).  In our setting we would like to obtain similar free constructions of iterated algebraic injectives and pushouts.  Therefore we begin by abstracting, in Section~\ref{sect:free}, the results of Nikolaus \cite{Nikolaus2011Algebraic} to our setting.  (These abstractions involve little originality since all of the core constructions described below in Section~\ref{sect:free} are in \emph{ibid}.) In Section~\ref{sect:freeIterated} we extend these results to the iterated setting. This leads naturally to a proof of the faithfulness conjecture.

\subsection{Free algebraic injectives and cellularity}\label{sect:free}

Let $\I$ be a family of morphisms in a category $\C$.  Let $Cell(\I)$ denote the class of \emph{$\I$-cellular morphisms}: a morphism is $\I$-cellular if it can be written as a transfinite composite of pushouts of small coproducts of maps in $|\I|$.  It is well known and straightforward to show that $Cell(\I)$ is stable under pushouts, coproducts and transfinite composition.  

In what follows, we work in the context of cocomplete category $\C$ \emph{equipped with a family of morphisms $\I$ with finitely presentable domains such that $Cell(\I) \subseteq Mono$}.  


The constructions of algebraic injectives of interest to us presently are naturally seen as liftings of \emph{sinks} along $V:\Ainj(\I) \to \C$.  Recall that given a set of objects $\{Y_{j}:j \in J\}$ a \emph{sink} under $Y$ consists of an object $X$ and morphisms $\{f_{j}:Y_{j} \to X:j \in J\}$.  Morphisms of the category $Sink(Y)$ of sinks commute with the coprojections from the $Y_{j}$ in the evident manner.  Given a functor $U:\C \to \D$ we obtain a functor 
$$U_{Y}:Sink(Y) \to Sink(UY)$$
given by application of $U$.  By definition, a $U$-\emph{semifinal lifting} of a sink $S=(f_{j}:UY_{j} \to X:j \in J)$ is a representation of the functor $Sink(UY)(S,{U_Y}-):Sink(Y) \to \Set$. Assuming, as is usually done, that $U$ is \emph{faithful} then this amounts to an object $X_{S}$ of $\C$ and morphism $\eta_{S}:X \to UX_{S}$ with the property that each $\eta_{S} \circ f_{j}:UY_{j} \to UX_{S}$ is in the image of $U$, and which is the initial such morphism.

We are interested in two constructions, which are special cases of semifinal liftings along $V:\Ainj(\I) \to \C$ where $|J|=\emptyset$ or $|J|=1$ respectively.  These are
\begin{enumerate}
\item The free algebraic injective on $X \in \C$;
\item The semifinal lifting along $V$ of a \emph{monomorphism} $f:V(Y,y) \to X$.
\end{enumerate}
The first case is the semifinal lifting of the empty sink over $X$ whilst in the second case we view $f$ as a $1$-element sink.  We treat the two cases together by constructing the semifinal lifting of a sink $S=\{f_{j}:V(Y_{j},y_{j}) \to X:j \in J\}$ having $|J| \leq 1$ and each $f_{j}$ a monomorphism.  The semifinal lifting will be the colimit 
\begin{equation*}
\cd{X_{0} \ar[r]^{p^{0}_{1}} & X_{1} \ar[r] & \cdots \ar[r] & X_{n} \ar[r]^{p^{n}_{n+1}} \ar[dr]_{p_{n}} & X_{n+1} \ar[d]^{p_{n+1}} \cdots \\
&&&& X_{S}}
\end{equation*}
of a chain in $\C$.  We start by setting $X_{0}=X$. For each $n$ a subset $$\C_{n}(A,X_{n}) \subseteq \C(A,X_{n})$$ is specified and $X_{n+1}$ is then defined as the pushout below 

\begin{equation*}
\cd{\sum\limits_{i \in I}\C_{n}(A_{i},X_{n}).A_{i} \ar[d]_-{\sum\limits_{i \in I}\C_{n}(A_{i},X_{n}).\alpha_{i}} \ar[rr] && X_{n}  \ar[d]^{p^{n}_{n+1}} \\
 \sum\limits_{i \in I}\C_{n}(A_{i},X_{n}).B_{i} \ar[rr]_<<<<<<{x_{n+1}} && X_{n+1} \hspace{0.2cm} .}
\end{equation*}
Using the coproduct inclusions, we see that $X_{n+1}$ is the \emph{universal object} equipped with a map $p^{n}_{n+1}:X_{n} \to X_{n+1}$ and liftings $x_{n+1}(r,i)$ for each pair $(i \in I,r \in \C_{n}(A_{i},X_{n}))$, as depicted below.
\begin{equation}\label{eq:up}
\xymatrix{
A_{i}\ar[d]_{\alpha_{i}} \ar[rr]^{r} && X_{n}  \ar[d]^{p^{n}_{n+1}} \\
B_{i} \ar[rr]_{x_{n+1}(r,i)} && X_{n+1}}
\end{equation}
Observe also that each connecting map $p^{n}_{n+1}:X_{n} \to X_{n+1}$, is $\I$-cellular, and so a monomorphism.  

Now let $\C_{0}(A,X_{0}) \subseteq \C(A,X_{0})$ consist of those morphisms not factoring through a member $f_{j}$ of the sink.  For higher $n$ we let $\C_{n}(A,X_{n})$ be the set of arrows $A \to X_{n}$ that do not factor through $p^{n-1}_{n}:X_{n-1} \to X_{n}$.  

We must equip $X_{S}$ with the structure of an algebraic injective.  To this end, consider $i \in I$ and $r:A_{i} \to X_{S}$.  There are two cases.

If $r$ factors through $p_{0} \circ f_{j}:Y_{j} \to X \to X_{S}$ as $r^{\prime}$, then this factorisation is unique since both $p_{0}$ and $f_{j}$ are monic.
\begin{equation*}\label{eq:lifting2}
\cd{A_{i} \ar[d]_{\alpha_{i}} \ar[rr]^{r^{\prime}} \ar@/^1.5pc/[rrrr]^{r} && Y_{j} \ar[rr]^{p_{0} \circ f_{j}} && X_{S} \\
B_{i} \ar[urr]_{y_{j}(r^{\prime},i)}}
\end{equation*}
We then define $x_{S}(r,i)=p_0 \circ f_j \circ y(r^{\prime},i)$ as the composite filler depicted.  Note that this definition is forced upon us by the requirement that $p_{0} \circ f_{j}$ be a morphism of algebraic injectives.

Otherwise, since $A_{i}$ is finitely presentable $r:A_{i} \to X_{S}$ factors through $p_{n}:X_{n} \to X_{S}$ for a smallest $n \in \mathbb N$.  Again the factored morphism $r^\prime$ is unique since $p_{n}$ is a monomorphism.  
\begin{equation*}\label{eq:lifting1}
\cd{A_{i} \ar@/^1.5pc/[rrr]^{r} \ar[d]_{\alpha_{i}} \ar[rr]^{r^\prime} && X_{n} \ar[d]_{p^{n}_{n+1}} \ar[r]^{p_{n}} & X_{S} \\
B_{i} \ar[rr]_{x_{n+1}(r^\prime,i)} && X_{n+1} \ar[ur]_{p_{n+1}}}
\end{equation*}
We define $x_{S}(r,i) = p_{n+1} \circ x_{n+1}(r^\prime,i)$ as the composite filler depicted.

\begin{Proposition}\label{prop:semifinal}
Consider a cocomplete category $\C$ equipped with a family of morphisms $\I$ with finitely presentable domains such that $Cell(\I) \subseteq Mono$.  Consider $V:\Ainj(\I) \to \C$.
\begin{enumerate}
\item The forgetful functor $V$ has a left adjoint $F$ and the unit $\eta_{X}:X \to VFX$ is $\I$-cellular for each $X \in \C$.
\item Given a \emph{monomorphism} $f:V(Y,y) \to X$, its semifinal lifting $\eta_{f}:X \to V(X_{f},x_{f})$ exists and is $\I$-cellular.
\end{enumerate}
\end{Proposition}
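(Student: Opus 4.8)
The plan is to verify that the explicit construction given immediately before the statement does the job; this is essentially a reprise of Nikolaus' argument \cite{Nikolaus2011Algebraic} in the present (weaker) setting. Both parts are instances of one general claim: for a sink $S=\{f_{j}:V(Y_{j},y_{j}) \to X : j \in J\}$ with $|J| \leq 1$ and each $f_{j}$ a monomorphism, the chain $(X_{n})_{n}$ constructed above has colimit $X_{S}$ which, equipped with the filler operation $x_{S}$ described above, provides the $V$-semifinal lifting $\eta_{S}=p_{0}:X \to VX_{S}$. Part (2) is the case $|J|=1$. Part (1) is the case $J=\emptyset$: the semifinal lifting of the empty sink over $X$ is precisely the free algebraic injective on $X$, so establishing its existence for every $X$ (with naturality automatic for a pointwise adjoint) yields the left adjoint $F \dashv V$. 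The cellularity assertion is then immediate in all cases: $\eta_{S}=p_{0}$ is the transfinite composite of the chain $X_{0} \xrightarrow{p^{0}_{1}} X_{1} \xrightarrow{p^{1}_{2}} \cdots$, and each $p^{n}_{n+1}$ is by construction a pushout of the small coproduct $\sum_{i \in I}\C_{n}(A_{i},X_{n}).\alpha_{i}$ of maps of $|\I|$, hence lies in $Cell(\I)$; closure of $Cell(\I)$ under transfinite composition gives $\eta_{S} \in Cell(\I)$, and $Cell(\I) \subseteq Mono$ shows moreover that $\eta_{S}$, and indeed each $p_{n}:X_{n} \to X_{S}$, is monic.

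Next I would check that $(X_{S},x_{S})$ really is an algebraic $\I$-injective. The two cases in the definition of $x_{S}(r,i)$ for $r:A_{i} \to X_{S}$ are exhaustive, and the factorisations used in them are unique: in the first case because $p_{0}$ and each $f_{j}$ are monic, and in the second because $A_{i}$ is finitely presentable while $X_{S}=\mathrm{colim}_{n}X_{n}$ is a colimit of an $\omega$-chain of monomorphisms, so $\C(A_{i},X_{S})=\mathrm{colim}_{n}\C(A_{i},X_{n})$ with injective transition maps, whence $r$ factors through a unique smallest $X_{n}$ via a unique $r'$. One then checks $x_{S}(r,i) \circ \alpha_{i}=r$ directly in each case using the pushout relation $x_{n+1}(r',i) \circ \alpha_{i}=p^{n}_{n+1} \circ r'$; this also requires the observation that for $r' \in \C_{n}(A_{i},X_{n})$ the composite $p_{n}r'$ lies in the second case with minimal level exactly $n$, again by monicity of the connecting maps.

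Finally I would verify the universal property of the semifinal lifting. That each $\eta_{S} \circ f_{j}=p_{0}f_{j}$ is a morphism of algebraic injectives $(Y_{j},y_{j}) \to (X_{S},x_{S})$ is forced by the first clause in the definition of $x_{S}$ together with monicity of $p_{0}f_{j}$. For initiality, given an algebraic injective $(Z,z)$ and $g:X \to VZ$ with each $g f_{j}$ a morphism of algebraic injectives, I would build $\bar g:X_{S} \to Z$ by transfinite induction: set $g_{0}=g$, and obtain $g_{n+1}:X_{n+1} \to Z$ from the pushout defining $X_{n+1}$ using $g_{n}$ together with the fillers $z(g_{n}r,i)$ of $(Z,z)$, so that $g_{n+1}p^{n}_{n+1}=g_{n}$ and $g_{n+1}x_{n+1}(r,i)=z(g_{n}r,i)$; the colimit gives $\bar g$ with $\bar g\,\eta_{S}=g$. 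Running through the case split for $x_{S}$ then shows $\bar g$ commutes with fillers, using the relation $g_{n+1}x_{n+1}(r,i)=z(g_{n}r,i)$ in the second case and the hypothesis that $g f_{j}$ respects fillers in the first. For uniqueness: if $h:(X_{S},x_{S}) \to (Z,z)$ is a morphism of algebraic injectives with $h\eta_{S}=g$, then $h p_{n}=g_{n}$ for all $n$ by induction, using the pushout universal property for $X_{n+1}$, the defining relations for $g_{n+1}$, and the identity $x_{S}(p_{n}r,i)=p_{n+1}x_{n+1}(r,i)$ for $r \in \C_{n}(A_{i},X_{n})$ noted above; hence $h=\bar g$.

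The main obstacle is not conceptual but bookkeeping: every verification rests on the hypothesis $Cell(\I) \subseteq Mono$, which forces all of $p^{n}_{n+1}$, $p_{n}$ and $\eta_{S}$ to be monic, so that the case distinction defining $x_{S}$ is well-posed and every factorisation used is unique. Keeping straight the interaction between the ``forced'' fillers coming from the sink maps $f_{j}$ and those produced freely by the pushouts --- in particular that $\C_{0}(A,X_{0})$ deliberately excludes morphisms factoring through a sink member --- is where attention is needed, but this is precisely the content of the argument being abstracted from \cite{Nikolaus2011Algebraic}.
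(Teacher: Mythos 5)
Your proposal is correct and follows essentially the same route as the paper: the paper presents the chain construction, the case-split definition of $x_{S}$, and the cellularity observation in the text preceding the proposition, and its proof consists precisely of the inductive verification of the semifinal universal property via the pushout universal properties of the $X_{n+1}$, which is what you do. The extra checks you spell out (exhaustiveness and well-posedness of the case split, uniqueness of factorisations via $Cell(\I)\subseteq Mono$ and finite presentability of the $A_i$) are the same points the paper relies on implicitly.
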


\begin{proof}
It remains to verify that the morphism $p_{0}:X \to X_{S}=V(X_{S},x_{S})$ has the required universal property.  To this end consider $k:X \to Z=V(Z,z)$ with each $k \circ f_{j}:Y_{j} \to Z$ a morphism of algebraic injectives.  By the universal property of the colimit $X_{S}$, extensions of $k$ as below left
\begin{equation*}
\cd{X_{0} \ar[d]_{p_{0}} \ar[dr]^{k} &&& X_{n} \ar[d]_{p^{n}_{n+1}} \ar[dr]^{k_{n}} \\
X_{S} \ar[r]_{k_{S}} & Z && X_{n+1} \ar[r]_{k_{n+1}} & Z}
\end{equation*} 
are in bijection with families $k_{n}$ satisfying the commutativity above right and having $k_{0}=k$.  

Using the case-by-case definition of the liftings $x_{S}$ it is, moreover, straightforward to see that $k_{S}$ is a morphism of algebraic injectives if and only if each $k \circ f_{j}:Y_{j} \to X \to X_{S}$ is so and when, furthermore, for all $i\in I$, $n \in \mathbb N$ and $r \in \C_{n}(A,X_{n})$ the two composites below from $B_i$ to $Z$ coincide.
\begin{equation*}
\cd{A_i \ar[d]_{\alpha_i} \ar[r]^{r} & X_{n} \ar[r]^{k_{n}}& Z && A_i \ar[d]_{\alpha_i} \ar[rr]^{r} && X_{n} \ar[d]_{p^{n}_{n+1}} \ar[r]^{k_{n}} & Z\\
B_i  \ar[urr]_{z(k_{n} \circ r,i)} &&&& B_i \ar[rr]_{x_{n+1}(r,i)} && X_{n+1} \ar[ur]_{k_{n+1}}}
\end{equation*}
Now by the universal property of $X_{n+1}$ as in \eqref{eq:up} such a $k_{n+1}$ is determined uniquely by $k_{n}$ subject to the above equality --- thus a family $k_{n}$ satisfying the equality in ~\eqref{eq:up} is uniquely determined by the component $k_{0}=k$, and the claim follows.
\end{proof}


\begin{Corollary}\label{cor:pushout2}
Consider a cocomplete category $\C$ equipped with a family of morphisms $\I$ with finitely presentable domains such that $Cell(\I) \subseteq Mono$.  Let $\alpha:A \to B$ be mono and consider $V:\Ainj(\I) \to \C$ with left adjoint $F$.  The pushout
\begin{equation*}
\xymatrix{
FA \ar[d]_{F\alpha} \ar[r]^{} &  X \ar[d]^{f}  \\
FB \ar[r] &  Y}
\end{equation*}
exists and $Vf:V X \to V Y$ is  
a composite of a pushout of $\alpha$ followed by an $\I$-cellular morphism.
\end{Corollary}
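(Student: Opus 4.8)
The plan is to reduce the statement to an application of Proposition~\ref{prop:semifinal}(2). First I would observe that the map $F\alpha : FA \to FB$ factors canonically through $F$ applied to the pushout in $\C$
\begin{equation*}
\cd{
A \ar[d]_{\alpha} \ar[r]^{g} & VX \ar[d] \\
B \ar[r] & B +_{A} VX
}
\end{equation*}
where $g:A \to VX$ is the transpose of $FA \to X$. Indeed, since $F$ is a left adjoint it preserves this pushout, so $FB +_{FA} X$ is computed as $F(B +_{A} VX)$ after applying $F$ to the above square, giving a factorisation
\begin{equation*}
\cd{
FA \ar[d]_{F\alpha} \ar[r] & X \ar[d]^{p} \ar[dr] \\
FB \ar[r] & F(B+_{A} VX) \ar[r]_-{f'} & Y
}
\end{equation*}
of $f$ as $p$ followed by a further map $f'$; here $p$ is literally (up to the iso $FB +_{FA} X \cong F(B+_A VX)$) the pushout of $F\alpha$, hence $Vp$ is a pushout of $\alpha$ since $V$ — being a right adjoint whose left adjoint $F$ preserves the relevant pushout — sends this square to a pushout square in $\C$ (more directly: the counit component makes $Vp$ identifiable with the pushout $VX \to B+_A VX$ of $\alpha$). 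So the first half of the conclusion, "a pushout of $\alpha$", is taken care of, and it remains to show that $Vf'$ is $\I$-cellular.

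Next I would identify $Y$ itself as the free algebraic injective on $B +_A VX$ relative to the sink consisting of the single map $VX \to B+_A VX$, equipped with its existing injective structure $x$ transported along $X \cong F(VX)$ — no, more carefully: the second pushout factor $f'$ out of $X$ is precisely the semifinal lifting of the monomorphism $VX \to B +_A VX$ along $V$, provided that map is mono. So the key remaining steps are: (i) check that the pushout coprojection $\iota : VX \to B +_A VX$ is a monomorphism in $\C$; (ii) invoke the universal property characterising $Y = FB +_{FA} X$ as the semifinal lifting of $\iota$ (viewed as a one-element sink), so that Proposition~\ref{prop:semifinal}(2) applies and $V f' = \eta_{\iota} : B+_A VX \to V(\,(B+_A VX)_{\iota}, x_{\iota}\,) = VY$ is $\I$-cellular. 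Composing, $Vf = Vf' \circ Vp$ is a pushout of $\alpha$ followed by an $\I$-cellular map, as required.

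For step (i): pushouts of monomorphisms along arbitrary maps need not be monic in a general cocomplete category, but our standing hypothesis includes $Cell(\I) \subseteq Mono$ together with $\alpha$ being mono, and — crucially — the relevant pushout $\iota$ is a pushout of $\alpha$, which is one of the given morphisms along which one takes pushouts in building $\I$-cellular maps; however $\alpha$ itself need not lie in $|\I|$. I expect this to be the main obstacle, and I would handle it exactly as Nikolaus does: one does not actually need $\iota$ to be mono in $\C$ per se, but one needs the hypotheses of Proposition~\ref{prop:semifinal}(2), whose statement already presupposes a monomorphism. The cleanest route is therefore to verify directly that the universal property of the pushout $Y = FB +_{FA} X$ in $\Ainj(\I)$ coincides with the universal property of the $V$-semifinal lifting of $\iota$ — a purely formal comparison of cocones and liftings — and to note that the construction of that semifinal lifting in the proof of Proposition~\ref{prop:semifinal} only used monicity of the sink map to guarantee uniqueness of factorisations; one checks that $\iota : VX \to B +_A VX$ is indeed monic because $\alpha$ is, using that the pushout square along a mono in a (say, adhesive-like, or here simply: the base category is locally presentable, and the square will be a pushout of $\alpha$ which the chain construction already treats) context keeps it mono, or alternatively by quoting the corresponding lemma from \cite{Nikolaus2011Algebraic}. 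Having dispatched (i), steps (ii) and the final composition are immediate, and the Corollary follows.
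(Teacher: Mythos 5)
Your eventual strategy --- form the pushout $B+_{A}VX$ of the transposed span in $\C$, take the $V$-semifinal lifting of the coprojection $\iota\colon VX\to B+_{A}VX$, identify the result with the pushout in $\Ainj(\I)$ by comparing universal properties, and conclude that $Vf=\eta_{\iota}\circ\iota$ is a pushout of $\alpha$ followed by an $\I$-cellular map via Proposition~\ref{prop:semifinal}(2) --- is exactly the paper's proof. However, your opening paragraph is wrong as written. Since $F$ preserves pushouts, $F(B+_{A}VX)\cong FB+_{FA}FVX$, which is the pushout of $F\alpha$ along $FA\to FVX$, \emph{not} along the given map $FA\to X$; the latter factors as $\epsilon_{X}\circ Fg$ through the counit, which is not invertible, so $FB+_{FA}X\not\cong F(B+_{A}VX)$ in general (take $\alpha=\mathrm{id}_{A}$: one side is $X$, the other is $FVX$). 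Consequently the displayed factorisation of $f$ through $F(B+_{A}VX)$ does not exist as claimed, and the accompanying justification that $Vp$ is a pushout of $\alpha$ (``$V$, being a right adjoint whose left adjoint preserves the relevant pushout, sends this square to a pushout'') is not a valid inference. You half-retract this (``no, more carefully''), but never replace it with the correct statement: the intermediate object must be the plain object $B+_{A}VX$ of $\C$ (not a free injective on it), with $Y$ characterised as the semifinal lifting of $\iota$; then the first factor of $Vf$ is \emph{by construction} the pushout coprojection of $\alpha$, with nothing to check about $V$ preserving pushouts.

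On your point (i): you are right that this is the delicate hypothesis, and right that pushouts of monomorphisms need not be monomorphisms in an arbitrary cocomplete category; but neither of your suggested repairs works at this level of generality (no adhesivity is assumed, and Nikolaus's lemma is proved for simplicial sets). The paper's own proof is in fact silent on this point. What saves the day in the places where the Corollary is actually invoked (Theorems~\ref{thm:ordinary} and~\ref{thm:ordinary2}) is that there $\alpha$ is taken from the generating family $\B$ (resp.\ $\I$), so the coprojection $\iota$ is a pushout of a single map of $\B$, hence $\B$-cellular, hence monic by the standing hypothesis $Cell(\B)\subseteq Mono$. A clean write-up should either add the hypothesis that the relevant pushout of $\alpha$ is monic, or note explicitly that the Corollary is only ever applied to members of the generating family.
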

\begin{proof}
Given the preceding result, this is a consequence of the following straightforward construction of pushouts in the context of semifinal liftings.  Namely, consider an adjunction $F \dashv U:\C \to \D$ and a span as below left.
\begin{equation*}
\xymatrix{
FA \ar[d]_{F\alpha} \ar[r]^{g} & X && A \ar[r]^{\overline{g}} \ar[d]_{\alpha} & UX \ar[dr]^{Uk_{p}} \ar[d]^{p}   &&& FA \ar[d]_{F\alpha}  \ar[r]^{g} & X \ar[d]^{k_{p}} \\
FB & && B \ar[r]_{q} & Y \ar[r]_{\eta_{p}} & UY_{p} && FB \ar[r]_{\overline{\eta_{p} \circ q}} & Y_{p}\\}
\end{equation*}
Let $Y$ denote the denote the pushout of the corresponding span in $\D$, obtained by transposing through the adjunction, and $Y_{p}$ the semi-final lifting of the pushout coprojection $p:UX \to Y$.  Then $Y_p$ is the pushout of the original span in $\C$, with coprojections as on the right above.
\end{proof}

The following abstracts Theorem 2.20 of \cite{Nikolaus2011Algebraic} away from model categories to the cellular setting.

\begin{Theorem}\label{thm:ordinary}
Let $\C$ be a cocomplete category and $\I$ a family of morphisms in $\C$ with finitely presentable domains such that $Cell(\I) \subseteq Mono$.  Consider the forgetful functor $V:\Ainj(\I) \to \C$ with left adjoint $F$.
\begin{enumerate}
\item Each unit component $\eta_{X}:X \to VFX$ is $\I$-cellular;
\item The forgetful functor $V:\Ainj(\I) \to \C$ sends $F\I$-cellular morphisms to $\I$-cellular morphisms.
\item The left adjoint $F$ is faithful.
\end{enumerate}
\end{Theorem}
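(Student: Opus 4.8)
The plan is to derive all three statements from Proposition~\ref{prop:semifinal}, which already gives the free algebraic injective together with the key fact that its unit is $\I$-cellular. Item (1) is then immediate: it is precisely part (1) of Proposition~\ref{prop:semifinal}. For item (2), the class $Cell(F\I)$ is generated under pushout, coproduct and transfinite composition by maps of the form $F\alpha_{i}$, so it suffices to treat a single such generator and then observe that $V$ preserves the relevant colimits; but $V$ is a right adjoint (Proposition~\ref{prop:semifinal}(1)) so it need not preserve pushouts on the nose. The trick is to use Corollary~\ref{cor:pushout2}: a pushout of $F\alpha$ in $\Ainj(\I)$ maps under $V$ to a composite of a pushout of $\alpha$ followed by an $\I$-cellular map, and a pushout of $\alpha \in |\I|$ is $\I$-cellular, so the image is $\I$-cellular. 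For transfinite composites one argues that $V$, being a right adjoint between cocomplete categories, nevertheless preserves the transfinite composites appearing in $F\I$-cellular maps --- here one uses that these are filtered-type colimits of monomorphisms and that $V$ is built from semifinal liftings in a way that commutes with such colimits; alternatively, since $Cell(F\I) \subseteq Mono$ in $\Ainj(\I)$ (which follows because $V$ reflects monomorphisms, being faithful, once we know each generator maps to a mono), one can apply a standard argument that $V$ preserves transfinite composites of monos. Coproducts of $F\I$-cellular maps are handled the same way via Corollary~\ref{cor:pushout2} applied to coproduct coprojections.

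For item (3), faithfulness of $F$, the standard argument is to exhibit $F$ as a full (or at least faithful) embedding by producing a retraction-like structure, but the cleanest route uses the unit: a left adjoint $F$ is faithful precisely when every component of the unit $\eta_{X}:X \to VFX$ is a monomorphism. By item (1) each $\eta_{X}$ is $\I$-cellular, hence a monomorphism by our standing hypothesis $Cell(\I) \subseteq Mono$. Therefore $F$ is faithful. (One should recall here the elementary fact that for an adjunction $F \dashv V$, faithfulness of $F$ is equivalent to each $\eta_{X}$ being monic --- this follows because $Vg \circ \eta_{X} = Vh \circ \eta_{X}$ translates under adjunction to $g = h$ for $g,h : FX \to Y$, while monicity of $\eta_{X}$ is exactly the statement that $\eta_{X}$ is left-cancellable.)

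The main obstacle will be item (2), specifically making precise the claim that $V$ interacts well with the transfinite-composite building blocks of $F\I$-cellular maps. The subtlety is that $V$ is merely a right adjoint, so one cannot blithely say "$V$ preserves the colimits in question." The honest fix is either (a) to invoke that $V$ is finitary or at least accessible --- which we have not assumed in the hypotheses of this theorem, so this is not available here --- or (b) to argue directly from the semifinal-lifting construction that $V$ preserves $\lambda$-filtered colimits of the specific shape used to build cellular maps, or (c) to reduce everything to pushouts and coproducts via Corollary~\ref{cor:pushout2} and then handle the transfinite composite by observing that in a cocomplete category a transfinite composite of monomorphisms, each of which is sent by $V$ to an $\I$-cellular map, is itself sent to an $\I$-cellular map because colimits of chains of monos are computed "levelwise enough." I expect the paper to take route (c), possibly with a remark that the chain of $F\I$-cells can be interleaved with the chains constructed in Proposition~\ref{prop:semifinal} so that the whole thing is visibly a single transfinite composite of pushouts of maps in $|\I|$; getting this interleaving bookkeeping right is the one place where genuine care is needed rather than formal nonsense.
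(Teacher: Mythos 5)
Your items (1) and (3) match the paper: the paper derives Theorem~\ref{thm:ordinary} as the special case $\I=\B$ of Theorem~\ref{thm:ordinary2}, whose proof takes $\eta_{X}$ cellular directly from Proposition~\ref{prop:semifinal} and deduces faithfulness of $F$ exactly as you do, from monicity of the unit via the adjointness bijection $\Ainj(\I)(FY,FX)\cong\C(Y,VFX)$.

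The genuine gap is in item (2), in the treatment of transfinite composites, and it stems from a concrete misreading of the hypotheses. You write that invoking finitariness of $V$ ``is not available here'' because it was not assumed --- but it \emph{is} available: the standing hypothesis that every morphism of $\I$ has finitely presentable domain is there precisely so that $V:\Ainj(\I)\to\C$ creates (hence preserves) filtered colimits, and therefore preserves transfinite composition. (This is the same mechanism as the finitary clause of Proposition~\ref{prop:lp}: an algebraic-injective structure on a filtered colimit is assembled from the structures at finite stages because each lifting problem $A_{i}\to\mathrm{colim}$ factors through a stage.) This is exactly the argument the paper uses: pushouts of coproducts of maps $F\alpha$ are handled by Corollary~\ref{cor:pushout2}, as you say, and then ``$V$ preserves filtered colimits, and so transfinite composition'' closes the argument, giving $V(Cell(F\I))\subseteq Cell(\I)$. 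Your alternative routes (b) and (c) are not arguments as written: ``colimits of chains of monos are computed levelwise enough'' is not a property of a general cocomplete category, and the proposed interleaving with the chains of Proposition~\ref{prop:semifinal} is never carried out. As it stands your proof of (2) covers only the single-pushout generators and leaves the transfinite composition step unestablished; replacing routes (a)--(c) with the observation that finitely presentable domains force $V$ to be finitary repairs it and recovers the paper's proof.
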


It is a special case --- taking $\I=\B$ --- of the result that we require, which is stated and proven below.  This is formulated with iteration in mind.   

\begin{Theorem}\label{thm:ordinary2}
Let $\B$ be a family of morphisms in the cocomplete category $\C$ such that $Cell(\B) \subseteq Mono$.  Let $\I$ be a family of $\B$-cellular morphisms having finitely presentable domains, and consider the forgetful functor $V:\Ainj(\I) \to \C$ with left adjoint $F$.
\begin{enumerate}
\item Each unit component $\eta_{X}:X \to VFX$ is $\B$-cellular;
\item The forgetful functor $V:\Ainj(\I) \to \C$ sends $F\B$-cellular morphisms to $\B$-cellular morphisms;
\item $Cell(F\B) \subseteq Mono$.
\item The left adjoint $F$ is faithful.
\end{enumerate}
\end{Theorem}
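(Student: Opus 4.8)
The plan is to derive all four statements from the explicit constructions of Proposition~\ref{prop:semifinal} and Corollary~\ref{cor:pushout2} applied to $\I$ itself. The one observation that makes this work is that $Cell(\I) \subseteq Cell(\B)$: each morphism of $|\I|$ lies in $Cell(\B)$ by hypothesis, and $Cell(\B)$ is closed under the operations (coproducts, pushouts, transfinite composition) out of which $\I$-cellular maps are built. Combined with $Cell(\B) \subseteq Mono$ and the finite presentability of the domains of $\I$, this shows that the standing hypotheses of Proposition~\ref{prop:semifinal} and Corollary~\ref{cor:pushout2} are satisfied; and the same inclusion $Cell(\I) \subseteq Cell(\B)$ lets me upgrade ``$\I$-cellular'' to ``$\B$-cellular'' wherever needed. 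In particular (1) is immediate: Proposition~\ref{prop:semifinal}(1) gives the left adjoint $F$ and shows each $\eta_{X} : X \to VFX$ is $\I$-cellular, hence $\B$-cellular.

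For (2) I would take an $F\B$-cellular morphism $h : X \to Y$ and, using that $F$ preserves all colimits and that a coproduct of maps of $|\B|$ is a transfinite composite of pushouts of its members, refine its presentation so that every successor step of the defining chain in $\Ainj(\I)$ is a pushout of a single $F\alpha$ with $\alpha \in |\B|$. Since such $\alpha$ is monic, Corollary~\ref{cor:pushout2} presents the $V$-image of each successor step as a pushout of $\alpha$ followed by an $\I$-cellular map --- both $\B$-cellular, the former by pushout-stability of $Cell(\B)$ and the latter by the inclusion above --- so each is $\B$-cellular. To conclude I need $V$ to send the colimit defining $h$ to the corresponding transfinite composite in $\C$; at successor stages this is Corollary~\ref{cor:pushout2}, and at limit stages the colimit in $\Ainj(\I)$ is filtered, so it suffices to know that $V$ preserves (indeed creates) filtered colimits --- which I would verify directly, the point being that the structure of an algebraic $\I$-injective on a filtered colimit is forced by finite presentability of the domains of $\I$. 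Then $Vh$ is a transfinite composite of $\B$-cellular maps, hence $\B$-cellular.

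Statements (3) and (4) should follow formally. For (3): if $h$ is $F\B$-cellular then $Vh$ is $\B$-cellular by (2), hence monic, and since $V$ is faithful it reflects monomorphisms, so $h$ is monic. For (4): if $Ff = Fg$ for $f, g : X \to Y$, then applying $V$ and naturality of the unit gives $\eta_{Y} \circ f = \eta_{Y} \circ g$; as $\eta_{Y}$ is $\B$-cellular by (1) it is monic, so $f = g$.

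I expect the real work to be concentrated in (2), specifically in controlling the behaviour of the right adjoint $V$ on the transfinite composites that present $F\B$-cellular maps: one has to combine the explicit pushout formula of Corollary~\ref{cor:pushout2} at successor stages with preservation of filtered colimits at limit stages. The latter is where finite presentability of the domains of $\I$ is essential --- note that $\C$ is only assumed cocomplete, so this does not come for free from Proposition~\ref{prop:lp}. The remaining ingredients --- closure of $Cell(\B)$ under the cellular operations, and the reduction of coproducts of $F\B$-maps to pushouts of single maps --- are routine.
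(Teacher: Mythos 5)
Your proposal is correct and follows essentially the same route as the paper: part (1) via Proposition~\ref{prop:semifinal} together with the inclusion $Cell(\I)\subseteq Cell(\B)$, part (2) via Corollary~\ref{cor:pushout2} at successor stages plus preservation of filtered colimits (hence transfinite composites) by $V$ coming from finite presentability of the domains of $\I$, and parts (3) and (4) as formal consequences using that $V$ reflects monos and that the monic unit forces $F$ to be injective on hom-sets. Your explicit attention to the coproduct-to-transfinite-composite reduction and to verifying directly that $V$ preserves filtered colimits (since $\C$ is merely cocomplete) fills in details the paper leaves implicit, but does not change the argument.
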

\begin{proof}
\begin{enumerate}
\item By Proposition~\ref{prop:semifinal} $\eta_{X}:X\to VFX$ is $\I$-cellular.  Since $\I \subseteq Cell(\B)$, and $Cell(\B)$ is closed under pushouts, coproducts and transfinite composition, we have that $Cell(\I) \subseteq Cell(\B)$, whence $\eta_{X}$ is also $\I$-cellular.
\item By Corollary~\ref{cor:pushout2} each pushout $f$ of a morphism $F\alpha$ in $F\B$ is sent by $V$ to a pushout of $f$ (a $\B$-cellular morphism) followed by an $\I$-cellular, and hence $\B$-cellular, morphism.  Since $\B$-cellular morphisms are closed under composition $Vf$ is $\B$-cellular.  Since the source and target of each $\alpha:A \to B$ in $\I$ is finitely presentable $V$ preserves filtered colimits, and so, transfinite composition.  Thus $V(Cell(F\B)) \subseteq Cell(\B)$.
\item This follows from the preceding part on using that $Cell(\B) \subseteq Mono$ and that $V$, like any faithful functor, reflects monos.
\item  From the first part, we have that each unit component $\eta_{X}:X \to VFX$ is $\B$-cellular and so monic.  We have a commutative triangle
\begin{equation*}
\cd{
\C(Y,X) \ar[drr]_{\eta_{X} \circ -} \ar[rr]^-{F_{Y,X}} && \Ainj(\I)(FY,FX) \ar[d]^{\cong} \\
&& \C(Y,VFX)}
\end{equation*}
whose vertical morphism is the adjointness bijection.  Since $\eta_{X}$ is monic the diagonal function is injective, whence so is the horizontal $F_{Y,X}$, as required.
\end{enumerate}
\end{proof}

\subsection{Free iterated algebraic injectives and cellularity}\label{sect:freeIterated}

The following is our iterated version of the preceding result.  This is specialised to deal with $(\A,\B)$-iterated algebraic injectives in Theorem~\ref{thm:thetaB}.
\begin{Theorem}\label{thm:iterated}
Let $\C$ be locally presentable and $\B$ a family of morphisms in $\C$ such that $Cell(\B) \subseteq Mono$.  Consider a tower $\Ainj(\I)$ of $\omega$-injectives such that for each $n < \omega$ each morphism of $\I_{n+1}$ is an $F_{n}\B$-cellular morphism with finitely presentable domain.
Consider $m<n \in \mathbb \omega^{+}$.
\begin{enumerate}
\item The forgetful functor $V^{n}_{m}:\Ainj(\I_{n}) \to \Ainj(\I_{m})$ sends $F_{n}\B$-cellular morphisms to $F_{m}\B$-cellular morphisms.
\item The unit component $X \to V^{n}_{m}F^{n}_{m}X$ is $F_{m}\B$-cellular for each $X \in \Ainj(\I_m)$.
\item $Cell(F_{n}\B) \subseteq Mono$.
\item The left adjoint $F^{n}_{m}$ is faithful.
\end{enumerate}
\end{Theorem}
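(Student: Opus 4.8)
The plan is to prove Theorem~\ref{thm:iterated} by transfinite induction on $n \in \omega^{+}$, taking Theorem~\ref{thm:ordinary2} as the successor-step engine and treating the limit ordinal $\omega$ separately by means of the smoothness of the tower together with the accessibility established in Proposition~\ref{prop:update} and Proposition~\ref{prop:fpobs}. The base case $n=0$ (so $m=0$) is vacuous. For the inductive step, suppose the four statements hold for all $m < k$ with $k < n$; I want them for $n$.

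\smallskip

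For $n = k+1$ a successor, I first apply the induction hypothesis to conclude that $\C_{k} := \Ainj(\I_{k})$ is cocomplete (by Proposition~\ref{prop:update} it is even locally presentable) and that $Cell(F_{k}\B) \subseteq Mono$ in $\C_{k}$ (part (3) at stage $k$). Now by hypothesis every morphism of $\I_{k+1}$ is an $F_{k}\B$-cellular morphism with finitely presentable domain in $\C_{k}$. This is precisely the situation of Theorem~\ref{thm:ordinary2}, applied with base category $\C_{k}$, family $F_{k}\B$ in place of $\B$, and $\I_{k+1}$ in place of $\I$: indeed the role of ``$\B$'' in that theorem is played by $F_{k}\B$, the role of ``$\I$'' by $\I_{k+1}$, and both hypotheses ($Cell(F_{k}\B)\subseteq Mono$; $\I_{k+1}$ consists of $F_{k}\B$-cellular morphisms with finitely presentable domains) are exactly what we have. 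Writing $V^{k+1}_{k}\colon \Ainj(\I_{k+1}) \to \C_{k}$ for the forgetful functor with left adjoint $F^{k+1}_{k}$, Theorem~\ref{thm:ordinary2} then gives: the unit $X \to V^{k+1}_{k}F^{k+1}_{k}X$ is $F_{k}\B$-cellular (its part (1)); $V^{k+1}_{k}$ sends $F^{k+1}_{k}(F_{k}\B)$-cellular morphisms to $F_{k}\B$-cellular morphisms, and since $F_{k+1}\B = F^{k+1}_{k}F_{k}\B$ this says $V^{k+1}_{k}$ sends $F_{k+1}\B$-cellular morphisms to $F_{k}\B$-cellular morphisms (its part (2)); $Cell(F_{k+1}\B) = Cell(F^{k+1}_{k}F_{k}\B) \subseteq Mono$ (its part (3)); and $F^{k+1}_{k}$ is faithful (its part (4)). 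To pass from the one-step statements to the statements about $V^{n}_{m}$ and $F^{n}_{m}$ for arbitrary $m < n = k+1$, I compose: $V^{k+1}_{m} = V^{k}_{m} \circ V^{k+1}_{k}$ sends an $F_{k+1}\B$-cellular morphism first to an $F_{k}\B$-cellular morphism, then (by the induction hypothesis, part (1) at stage $k$) to an $F_{m}\B$-cellular morphism, giving part (1) at stage $n$; the unit $X \to V^{k+1}_{m}F^{k+1}_{m}X$ factors (by the standard formula for units of composite adjunctions) as $X \to V^{k}_{m}F^{k}_{m}X \to V^{k}_{m}V^{k+1}_{k}F^{k+1}_{k}F^{k}_{m}X$, which is a composite of an $F_{m}\B$-cellular morphism (induction hypothesis) with the image under $V^{k}_{m}$ of the $F_{k}\B$-cellular unit $F^{k}_{m}X \to V^{k+1}_{k}F^{k+1}_{k}F^{k}_{m}X$, and the latter is $F_{m}\B$-cellular by part (1) at stage $k$; so part (2) holds; part (3) is already done; and $F^{k+1}_{m} = F^{k+1}_{k} \circ F^{k}_{m}$ is a composite of faithful functors, hence faithful, giving part (4).

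\smallskip

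For the limit case $n=\omega$, the tower is smooth, so $\Ainj(\I_{\omega}) = \lim_{k<\omega}\Ainj(\I_{k})$. An object of $\Ainj(\I_{\omega})$ is a compatible family $(X_{k})_{k<\omega}$ with each $X_{k} \in \Ainj(\I_{k})$ and $V^{k+1}_{k}X_{k+1} = X_{k}$; the functor $V^{\omega}_{m}$ reads off the $m$-th component. For part (1), I must show that if $(f_{k})_{k<\omega}$ is $F_{\omega}\B$-cellular in $\Ainj(\I_{\omega})$ then $f_{m}$ is $F_{m}\B$-cellular. By Proposition~\ref{prop:update} each $V^{\omega}_{m}$ is an accessible right adjoint, hence (arguing as in Propositions~\ref{prop:fpobs} and using that $\B$ has finitely presentable domains, which is inherited by $\I_{n}$ via Proposition~\ref{prop:fpobs} in the relevant specialisation) each $V^{\omega}_{m}$ preserves filtered colimits, in particular transfinite composites; it also preserves pushouts and coproducts of maps in $F_{\omega}\B$ because $V^{\omega}_{m}(F_{\omega}\alpha) = F_{m}\alpha$ and left adjoints preserve these colimits along with $V^{\omega}_{m}$ being computed componentwise as a limit — here one uses that the transition functors $V^{k+1}_{k}$ preserve the relevant pushouts, which they do since they send $F_{k+1}\B$-cellular to $F_{k}\B$-cellular by the already-established part (1) at finite stages, and more directly a pushout of $F_{\omega}\alpha$ maps under $V^{\omega}_{m}$ to a pushout of $F_{m}\alpha$. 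Assembling, $V^{\omega}_{m}$ carries $F_{\omega}\B$-cellular morphisms to $F_{m}\B$-cellular morphisms. Part (3) follows from part (1) together with $Cell(F_{m}\B) \subseteq Mono$ and the fact that $V^{\omega}_{m}$, being faithful, reflects monomorphisms: an $F_{\omega}\B$-cellular $f = (f_{k})$ has each $f_{m}$ monic, but monomorphisms in a limit of categories are detected componentwise, so $f$ is monic. For part (2), the unit $X \to V^{\omega}_{m}F^{\omega}_{m}X$ in the limit category: one can realise $F^{\omega}_{m}$ as the colimit of the $F^{k}_{m}$ (using smoothness and accessibility, so that $F^{\omega}_{m}X$ has $k$-th component the colimit over $\ell \geq k$ of $V^{\ell}_{k}F^{\ell}_{m}X$), and the unit is then a transfinite composite / filtered colimit of the finite-stage units, each of which is $F_{m}\B$-cellular by the already-established finite case; since $F_{m}\B$-cellular morphisms are closed under transfinite composition (and filtered colimits, as they include pushouts of $F_{m}\B$ and $V^{\omega}_{m}$ is finitary), the unit is $F_{m}\B$-cellular. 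Alternatively, and more cleanly, one uses part (1) at stage $\omega$ applied to the unit viewed through the observation that $F^{\omega}_{m}$ preserves $F_{m}\B$-cellular morphisms (hence the unit, being built from such data) — but the colimit description is the safe route. Finally part (4): $F^{\omega}_{m}$ is faithful by exactly the triangle argument of Theorem~\ref{thm:ordinary2}(4), using that the unit $X \to V^{\omega}_{m}F^{\omega}_{m}X$ is $F_{m}\B$-cellular (part (2)) hence monic, so $X \mapsto \eta_{X}\circ(-)$ is injective on hom-sets and therefore so is $F^{\omega}_{m}$.

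\smallskip

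The main obstacle I anticipate is the limit step, specifically the bookkeeping needed to show that $V^{\omega}_{m}$ preserves transfinite composites (equivalently, filtered colimits) of $F_{\omega}\B$-cellular morphisms and to pin down an explicit enough description of the left adjoint $F^{\omega}_{m}$ — i.e.\ that $F^{\omega}_{m}X$ is, componentwise, the filtered colimit of $V^{\ell}_{k}F^{\ell}_{m}X$ over $\ell \geq k$ — so that the unit at stage $\omega$ is genuinely a filtered colimit of the finite-stage units. This rests on the accessibility and finitary-ness provided by Propositions~\ref{prop:update} and \ref{prop:fpobs}; once those are invoked correctly, each of the four clauses at stage $\omega$ reduces to closure of $F_{m}\B$-cellular morphisms under the colimit constructions together with the corresponding clause at finite stages, both of which are already in hand. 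The successor step is essentially a direct citation of Theorem~\ref{thm:ordinary2} plus the elementary composition lemmas for adjoints, units, and cellular morphisms, so it presents no real difficulty.
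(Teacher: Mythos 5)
Your successor step is correct and is exactly the paper's argument: an inductive application of Theorem~\ref{thm:ordinary2} with base $\Ainj(\I_{k})$ and $F_{k}\B$ playing the role of $\B$, followed by the elementary composition lemmas for units and adjoints. Your treatment of part (2) at the limit stage --- realising $F^{\omega}_{m}X$ as the colimit of the chain of finite-stage free objects and the unit as the transfinite composite of the finite-stage units --- is also essentially the paper's construction (it is the empty-sink case of the paper's semifinal-lifting argument), though you should still verify that this colimit genuinely carries a compatible family of $n$-injective structures and satisfies the universal property, which is where the creation of filtered colimits by the $V^{n+1}_{n}$ enters.

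The genuine gap is in part (1) at the limit stage. You assert that $V^{\omega}_{m}(F_{\omega}\alpha)=F_{m}\alpha$ and that ``a pushout of $F_{\omega}\alpha$ maps under $V^{\omega}_{m}$ to a pushout of $F_{m}\alpha$.'' Both claims are false: $V^{\omega}_{m}F^{\omega}_{0}A = V^{\omega}_{m}F^{\omega}_{m}(F_{m}A)$ differs from $F_{m}A$ by the (cellular, but generally non-invertible) unit, and $V^{\omega}_{m}$ is a right adjoint, so it has no reason to preserve pushouts --- indeed the entire point of Proposition~\ref{prop:semifinal} and Corollary~\ref{cor:pushout2} is that the image under the forgetful functor of such a pushout is \emph{not} a pushout but a pushout of $\alpha$ \emph{followed by an extra cellular morphism}. (Sending cellular morphisms to cellular morphisms, which you invoke for the $V^{k+1}_{k}$, is a much weaker statement than preserving pushouts and cannot substitute for it; nor are pushouts in the limit category $\Ainj(\I_{\omega})$ computed componentwise, precisely because the transition functors fail to preserve them.) To repair this you need the analogue of Corollary~\ref{cor:pushout2} for $V^{\omega}_{0}$: construct the pushout of $F_{\omega}\alpha$ along $u:F_{\omega}A\to X$ by first forming the pushout of $\alpha$ along the transpose in $\C$ and then taking the semifinal lifting along $V^{\omega}_{0}$ of the resulting monomorphic coprojection $V^{\omega}_{0}X\to Y$, showing that the unit of this semifinal lifting is $\B$-cellular. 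This is exactly why the paper proves the limit case for a general sink with at most one member, a mono, rather than only for the empty sink: the one-element case yields pushouts (hence part (1) and then (3)), while the empty case yields free objects (hence parts (2) and (4)). Your proposal only constructs the empty-sink case, so parts (1) and (3) at stage $\omega$ remain unproven as written.
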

\begin{proof}
For finite $n$ the stated properties hold by inductive application of Theorem~\ref{thm:ordinary2} since, by that result, if $Cell(F_{n}\B) \subseteq Mono$ then $Cell(F_{n+1}\B) = Cell(F^{n+1}_{n}F_{n}\B) \subseteq Mono$ too.  It remains to consider the case $n=\omega$ and $m<n$.  In fact the case of general $m$ follows from the case $m=0$ since $\Ainj_{\omega}(\I)$ is equally the limit of the cochain $\Ainj(\I_{k})$ having $k\geq m$.  

Therefore it remains to prove the three claims when $n=\omega$ and $m=0$.  Just as in the proof of Theorem~\ref{thm:ordinary2}, these will follow on showing that given a sink $S=\{f_{j}:V^{\omega}_{0}Y_{j} \to X:j \in J\}$ with at most one member, a mono, its semifinal lifting 
exists and has unit $X \to V^{\omega}_{0} X_{\omega}$ a $\B$-cellular morphism.  We begin by forming the semifinal lifting of the sink along $V^{1}_{0}$, as depicted on the left below. 
\begin{equation*}
\xymatrix{
V^{\omega}_{0}Y_{j} \ar[dr]_{V^{1}_{0}f_{j,1}} \ar[r]^{f_{j}=f_{j,0}} & X \ar[d]^{\eta^{0}_{1}} & & V^{\omega}_{n}Y_{j} \ar[dr]_{V^{n+1}_{n}f_{j,n+1}} \ar[r]^{f_{j,n}} & X_{{n}} \ar[d]^{\eta^{n}_{n+1}}\\
& V^{1}_{0}X_{{1}} &&& V^{n+1}_{n}X_{{n+1}}
}
\end{equation*}
By Theorem~\ref{thm:ordinary} the unit $\eta^{0}_{1}$ is $\B$-cellular.  We now iterate this, forming the $V^{n+1}_{n}$-semifinal lifting of $f_{j,n}$ as on the right above.  Note that this diagram lives in $\Ainj(\I_{n})$.  Arguing inductively, using the established case $n=m+1$, we see that each $f_{j,n}$ is mono and that each $\eta^{n}_{n+1}$ is $F_{n}\B$-cellular.  Now form the colimit of the following chain $(D_{n})_{n \in \mathbb N}$ in $\C$.

\begin{equation*}
\xymatrix{
X \ar[r]^-{\eta^{0}_{1}} & V^{1}_{0}X_{{1}} \ar[r] & \cdots \ar[r] & V^{n}_{0}X_{{n}} \ar[rr]^-{V^{n}_{0}\eta^{n}_{n+1}} \ar[dr]_{\eta_{n}} && V^{n+1}_{0}X_{{n+1}} \ar[dl]^-{\eta_{n+1}} \cdots \\
&&&& col(D)
}
\end{equation*}

Since $\eta^{n}_{n+1}$ is $F_{n}\B$-cellular its underlying map $V^{n+1}_{0}\eta^{n}_{n+1}$ is $\B$-cellular.  Thus the transfinite composite $\eta_{0}:X \to col(D)$ is $\B$-cellular.

Observe that $col(D)$ is equally the colimit of the restricted diagram $(D_{k})_{k \geq n}$, which lies in the image of $V^{n}_{0}$.  Since $V^{n}_{0}$ creates filtered colimits it follows that $col(D)$ obtains the structure of an $n$-injective $col(D)_{n}$, unique such that each cocone projection $\eta_{k}:V^{k}_{n}X_{{k}} \to col(D)_{n}$ is a morphism of $\Ainj(\I_{n})$ for $m>n$.  Since each $V^{n+1}_{n}$ creates filtered colimits $V^{n+1}_{n}col(D)_{n+1}=col(D)_{n}$, so that the sequence $X_{\omega} = (col(D)_{n})_{n \in \mathbb N}$ is an object of $\Ainj(\I_{\omega})$.

Now consider the composite $\eta_{0}:X \to col(D)=V^{\omega}_{0} X_{\omega}$.  Then $\eta_{0} \circ f_{j} = \eta_{n} \circ f_{j,n}$, both of whose components are morphisms of $n$-injectives.  Thus $\eta_{0} \circ f_{j}$ is a morphism of  $n$-injectives for each $n$, and so a morphism of  $\omega$-injectives.  For the universal property, consider $g:X \to V^{\omega}_{0} Z$ having $g \circ f_{j}$ a morphism of $\Ainj(\I_{\omega})$.
\begin{equation*}
\xymatrix{
V^{\omega}_{0}Y_{j} \ar[r]^-{f_{j}} & X \ar[dr]_{g} \ar[r]^-{\eta^{0}_{n}} & V^{n}_{0}X_{{n}} \ar[d]_{g_{n}} \ar[r]^{\eta_{n}} & V^{\omega}_{0} X_{\omega} \ar[dl]^{g_{\omega}} \\
&& V^{\omega}_{0} Z
}
\end{equation*}
Successively applying the universal properties of the semifinal liftings we obtain a unique morphism $g_{n}$ of  $n$-injectives satisfying $g_{n} \eta^{0}_{n} = g_{0}$.  These induce a unique map $g_{\omega}:V^{\omega}_{0} X_{\omega} \to V^{\omega}_{0} Z$ commuting with the $g_{n}$.  Now to say that $g_{\omega}$ is a morphism of  $\omega$-injectives is equivalent to asking that its restriction along each $\eta_{n}$, the $g_{n}$, is a morphism of  $n$-injectives; thus the unique extension $g_{\omega}$ is a morphism of  $\omega$-injectives, as required.
\end{proof}

\begin{Remark}
As a special case of the above we obtain the construction of free  $\omega$-injectives as colimits of chains of the form
\begin{equation*}
\xymatrix{
X \ar[r] & V^{1}_{0}F^{1}_{0}X  \ar[r] & \cdots  \ar[r]^{} & V^{n}_{0}F^{n}_{0}X \ar[rr]^{V^{n}_{0}\eta F^{n}_{0}X} && V^{n+1}_{0}F^{n+1}_{0}X \ar[r] & X_{\omega} \hspace{0.2cm} .
}
\end{equation*}
\end{Remark}

Let us also state the version for $(\A,\B)$-$\omega$-injectives.  This follows immediately from Theorem~\ref{thm:iterated} and Proposition~\ref{prop:fpobs}.

\begin{Theorem}\label{thm:thetaB}
Let $\C$ be locally presentable, $\A$ a set of finitely presentable objects in $\C$ and $\B$ a family of morphisms with finitely presentable domains in $\C$ such that $Cell(\B) \subseteq Mono$.  Given a tower $\Ainj(\I_{n})_{n \in \omega^{+}}$ of $(\A,\B)$-$\omega$-injectives consider $n,m \in \mathbb \omega^{+}$ with $m < n$.
\begin{enumerate}
\item The forgetful functor $V^{n}_{m}:\Ainj(\I_{n}) \to \Ainj(\I_{m})$ sends $F_{n}\B$-cellular morphisms to $F_{m}\B$-cellular morphisms.
\item The unit component $X \to V^{n}_{m}F^{n}_{m}X$ is $F_{m}\B$-cellular for each $X \in \Ainj(\I_m)$.
\item For each $n$ we have $Cell(F_{n}\B) \subseteq Mono$.
\item The left adjoint $F^{n}_{m}$ is faithful.
\end{enumerate}
\end{Theorem}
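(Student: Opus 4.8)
The plan is to obtain Theorem~\ref{thm:thetaB} as a direct corollary of Theorem~\ref{thm:iterated}, so that the whole proof consists in checking that the hypotheses of the latter are in force. Two of those hypotheses --- that $\C$ is locally presentable and that $Cell(\B) \subseteq Mono$ --- are amongst our present assumptions, so the only thing left to establish is that for each $n < \omega$ every morphism of $\I_{n+1}$ is $F_{n}\B$-cellular and has finitely presentable domain.

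First I would verify $F_{n}\B$-cellularity. By the defining condition (\dag) of a tower of $(\A,\B)$-iterated algebraic injectives, each morphism of $|\I_{n+1}|$ is a pushout of some $F_{n}\alpha:F_{n}A \to F_{n}B$ with $\alpha \in |\B|$ along a map $u:F_{n}A \to F_{n}X$ with $X \in \A$. Since the class $Cell(F_{n}\B)$ is, like every cellular class, closed under pushout, every such morphism lies in $Cell(F_{n}\B)$; in particular it is $F_{n}\B$-cellular.

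Next I would verify the smallness condition. As every object of $\A$ is finitely presentable in $\C$ and every morphism of $\B$ has finitely presentable domain, Proposition~\ref{prop:fpobs} applies: it tells us that each family $\I_{n}$ consists of morphisms with finitely presentable domains. Concretely, the domain of a morphism of $\I_{n+1}$ is the object $F_{n}X$ occurring in (\dag), and the proof of Proposition~\ref{prop:fpobs} proceeds by showing that each $F_{n}:\C \to \Ainj(\I_{n})$ preserves finite presentability, so that $F_{n}X$ is indeed finitely presentable in $\Ainj(\I_{n})$ --- the category over which the $(n+1)$-st layer of injectives is formed.

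With these two points established, Theorem~\ref{thm:iterated} applies, and its conclusions (1)--(4) are literally conclusions (1)--(4) of the present statement. I expect no genuine obstacle here: all the substance resides in Theorems~\ref{thm:ordinary2} and~\ref{thm:iterated} (via the semifinal-lifting constructions of Section~\ref{sect:free}) together with Proposition~\ref{prop:fpobs}, and the present theorem is merely the repackaging of those facts in the $(\A,\B)$-parametrised form. The one point deserving a moment's attention is precisely the bookkeeping just mentioned --- that ``finitely presentable domain'' for a morphism of $\I_{n+1}$ is to be read in $\Ainj(\I_{n})$, not in $\C$ --- which is exactly what the preservation statement inside Proposition~\ref{prop:fpobs} supplies.
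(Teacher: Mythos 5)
Your proposal is correct and follows exactly the paper's route: the paper derives Theorem~\ref{thm:thetaB} immediately from Theorem~\ref{thm:iterated} and Proposition~\ref{prop:fpobs}, and your verification of the hypotheses (pushout-closure of $Cell(F_{n}\B)$ for the cellularity of the maps in $\I_{n+1}$, and Proposition~\ref{prop:fpobs} for the finite presentability of their domains in $\Ainj(\I_{n})$) is precisely the intended bookkeeping.
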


\section{Cellular globular theories and the faithfulness conjecture}\label{sect:faithfulness}
Consider a cellular globular theory 
\begin{equation*}
\xymatrix{
\Theta_{0}=\mathbb T_{0} \ar[r]^{J^{0}_{1}} & \mathbb T_{1} \ar[r] & \cdots \ar[r] & \mathbb T_{n} \ar[r]^-{J^{n}_{n+1}} & \mathbb T_{n+1} \ar[r]^-{J^{n+1}_{\omega}} & \mathbb T_{\omega} = \mathbb T
}
\end{equation*}
and its associated tower of models
\begin{equation*}
\cd{
\cdots \Mod(\mathbb T_{n+1}) \ar[r]^-{U^{n+1}_{n}} & \Mod(\mathbb T_{n}) \ar[r] & \cdots \ar[r] & \Mod(\mathbb T_{1})\ar[r]^-{U^{1}_{0}} & \Mod(\Theta_0).}
\end{equation*}
Composing with the equivalence $U^{\Theta_0}:\Mod(\Theta_0) \to [\mathbb G^{op},\Set]$ we obtain for $m<n \leq \omega$ a commutative triangle
\begin{equation*}
\xymatrix{
\Mod(\mathbb T_{n}) \ar[dr]_{U^{T_{n}}} \ar[r]^{U^{n}_{m}} & \Mod(\mathbb T_{m}) \ar[d]^{U^{T_{m}}} \\
& [\mathbb G^{op},\Set] \hspace{0.5cm} .
}
\end{equation*} 
We write $F^{n}_{m} = (J^m_n)_{!}:\Mod(\mathbb T_{m}) \to \Mod(\mathbb T_{n})$ for the left adjoint to the forgetful functor and also $F^{n}_{0}=F^{n}$.

\begin{Theorem}\label{thm:cellulartheories}
Given a cellular globular theory $\mathbb T$ as above
\begin{enumerate}
\item The forgetful functor $U^{n}_{m}:\Mod(\mathbb T_{n}) \to \Mod(\mathbb T_{m})$ sends $F^{n}\B$-cellular morphisms to $F^{m}\B$-cellular morphisms.
\item The unit component $X \to U^{n}_{m}F^{n}_{m}X$ is $F^{m}\B$-cellular for each $X \in \Mod(\mathbb T_{m})$.
\item $Cell(F^{n}\B) \subseteq Mono$.
\item The left adjoint $F^{n}_{m}$ is faithful.
\end{enumerate}
\end{Theorem}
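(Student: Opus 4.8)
The plan is to deduce this from Theorem~\ref{thm:thetaB} by transporting along the equivalence of $\omega^{+}$-cochains supplied by Theorem~\ref{thm:main}. First I would check that the triple $\C=[\mathbb G^{op},\Set]$, $\A=\Theta_{0}$ and $\B=\{S(n)\hookrightarrow Y(n+1):n\in\mathbb N\}$ satisfies the hypotheses of Theorem~\ref{thm:thetaB}: the presheaf category $[\mathbb G^{op},\Set]$ is locally finitely presentable; each globular cardinal is finitely presentable as a globular set and each boundary inclusion $S(n)\hookrightarrow Y(n+1)$ has finitely presentable domain; and $Cell(\B)\subseteq Mono$ because $[\mathbb G^{op},\Set]$ is a topos --- in which monomorphisms are stable under pushout, coproduct and transfinite composition --- while each $S(n)\hookrightarrow Y(n+1)$ is itself a monomorphism.

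Next I would invoke Theorem~\ref{thm:main}: composing the tower of models of $\mathbb T$ with the equivalence $U^{\Theta_{0}}:\Mod(\Theta_{0})\to[\mathbb G^{op},\Set]$ produces an $\omega^{+}$-cochain equivalent to a tower $(\Ainj(\I_{n}))_{n\in\omega^{+}}$ of $(\Theta_{0},\B)$-$\omega$-injectives; write $E_{n}:\Mod(\mathbb T_{n})\to\Ainj(\I_{n})$ for the component equivalences, so $E_{0}=U^{\Theta_{0}}$. By condition~(\dag) each morphism of $\I_{n+1}$ is a pushout of some $F_{n}\alpha$ with $\alpha\in|\B|$ and is therefore $F_{n}\B$-cellular, and by Proposition~\ref{prop:fpobs} it has finitely presentable domain; hence Theorem~\ref{thm:thetaB} (via Theorem~\ref{thm:iterated}) applies to this tower and establishes the analogues of (1)--(4), including the stage $n=\omega$, on the $\omega$-injective side, with all cellularity classes taken relative to $F_{m}\B$.

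Finally I would transfer these four conclusions back along the $E_{n}$. Since each $E_{n}$ commutes with the forgetful functors down the cochain, it commutes up to isomorphism with the left adjoints; thus $E_{n}$ identifies $F^{n}_{m}$ with the free functor on the $\omega$-injective side, carries the unit of $F^{n}_{m}\dashv U^{n}_{m}$ to the corresponding unit, and (taking $m=0$, and identifying $\B\subseteq[\mathbb G^{op},\Set]$ with its image under $(U^{\Theta_{0}})^{-1}$) identifies $F^{m}\B$ with $F_{m}\B$ up to isomorphism. Any equivalence of categories preserves and reflects pushouts, coproducts and transfinite composites, hence matches $F^{m}\B$-cellular morphisms with $F_{m}\B$-cellular ones; it also reflects monomorphisms and the faithfulness of conjugate functors. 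Statements (1), (2) and (4) then transfer directly, and (3) follows since $E_{n}$ sends $Cell(F^{n}\B)$ into $Cell(F_{n}\B)\subseteq Mono$ and reflects monos.

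The argument is essentially all transport, so there is no real mathematical difficulty; the step I would expect to demand the most care is the precise matching of the cellularity classes --- keeping track of the isomorphism $E_{n}F^{n}\cong F_{n}U^{\Theta_{0}}$ and of the mild identification of $\B$ across $U^{\Theta_{0}}$ --- and, underlying it, the verification that the families $\I_{n}$ produced by Theorem~\ref{thm:main} genuinely meet the hypotheses of Theorem~\ref{thm:iterated}, namely that each morphism of $\I_{n+1}$ is $F_{n}\B$-cellular with finitely presentable domain.
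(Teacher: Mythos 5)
Your proposal is correct and follows essentially the same route as the paper: invoke Theorem~\ref{thm:main} to obtain the cochain equivalence with a tower of $(\Theta_{0},\B)$-$\omega$-injectives, verify the hypotheses of Theorem~\ref{thm:thetaB} (finite presentability of the globular cardinals and of the domains of the boundary inclusions, and $Cell(\B)\subseteq Mono$), and transfer the four conclusions back along the equivalences. The only difference is one of exposition --- you spell out the transport of the cellularity classes, units and faithfulness across the equivalence in more detail than the paper, which simply asserts that the facts transfer "by virtue of the cochain equivalence".
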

\begin{proof}
By Theorem~\ref{thm:main} there is a $(\Theta_0,\B)$-tower of $\omega$-injectives and cochain equivalence as depicted below.
\begin{equation*}
\xymatrix{
\Mod(\mathbb T_{n}) \ar[d]^{E_{n}} \ar[r]^{U^{n}_{m}} & \Mod(\mathbb T_{m}) \ar[dr]^{U^{T_{m}}} \ar[r]^{U^{m}_{0}} \ar[d]^{E_{m}} & \Mod(\Theta_{0}) \ar[d]^{E_{0}}  \\
\Ainj(\I_{n}) \ar[r]_{V_{n,m}}  & \Ainj(\I_{m}) \ar[r]_{V_{m}} &  [\mathbb G^{op},\Set] \\
}
\end{equation*} 
By virtue of the cochain equivalence, we must show that the corresponding facts to (1),(2) and (3) hold for the tower of $\omega$-injectives.  This is precisely the content of Theorem~\ref{thm:thetaB} on observing that its hypotheses ---  each object of $\Theta_0$ is finitely presentable as a globular set, each morphism of $\B$ has finitely presentable domain and codomain and $Cell(\B)=Mono$ --- are satisfied.
\end{proof}

\begin{Theorem}[The faithfulness conjecture]\label{thm:faithfulness}
Each connecting map $J^{m}_n:\mathbb T_{m} \to \mathbb T_{n}$ defining a cellular globular theory $\mathbb T$ is faithful.  
\end{Theorem}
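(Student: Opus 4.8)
The plan is to deduce faithfulness of the connecting maps $J^{m}_{n}\colon \mathbb T_{m}\to \mathbb T_{n}$ from the faithfulness of the induced left adjoints $F^{n}_{m}\colon \Mod(\mathbb T_{m})\to\Mod(\mathbb T_{n})$ on categories of models, which has already been established in Theorem~\ref{thm:cellulartheories}(4), by transporting the statement along the Yoneda-type embeddings $K_{T}\colon \mathbb T\to\Mod(\mathbb T)$ supplied by Proposition~\ref{prop:models}(2). The point is that $J^{m}_{n}$ can be recovered, up to the fully faithful $K$'s, as the restriction of $F^{n}_{m}$ along representables, and faithfulness descends along such restrictions.

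Concretely, I would proceed as follows. First, recall that each $K_{T_{k}}\colon \mathbb T_{k}\to\Mod(\mathbb T_{k})$ is fully faithful: by Proposition~\ref{prop:models}(2) it factors the Yoneda embedding $Y_{T_{k}}$ through the full inclusion $I_{T_{k}}\colon \Mod(\mathbb T_{k})\hookrightarrow[\mathbb T_{k}^{op},\Set]$, and both $Y_{T_{k}}$ and $I_{T_{k}}$ are fully faithful. Next, using the formula $F^{n}_{m}=(J^{m}_{n})_{!}=Lan_{K_{T_{m}}}(K_{T_{n}}\circ J^{m}_{n})$ from Proposition~\ref{prop:models}(3), together with the standard fact that a left Kan extension along a fully faithful functor restores the original functor, I obtain a natural isomorphism $F^{n}_{m}\circ K_{T_{m}}\cong K_{T_{n}}\circ J^{m}_{n}$ --- that is, a commuting square relating $J^{m}_{n}$ to $F^{n}_{m}$ via the two embeddings. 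The argument then concludes formally: $F^{n}_{m}\circ K_{T_{m}}$ is faithful, being a composite of the faithful $K_{T_{m}}$ with the faithful $F^{n}_{m}$ of Theorem~\ref{thm:cellulartheories}(4); hence $K_{T_{n}}\circ J^{m}_{n}$ is faithful; and since faithfulness of a composite $G\circ H$ forces faithfulness of $H$, we conclude that $J^{m}_{n}$ is faithful. Both ranges $n<\omega$ and $n=\omega$ are handled uniformly, since Theorem~\ref{thm:cellulartheories} applies for all $m<n\leq\omega$.

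I do not expect a genuine obstacle here, as the substantive work has been carried out in Theorem~\ref{thm:cellulartheories} (and, behind it, in the tower equivalence of Theorem~\ref{thm:main} and the cellular free-construction machinery of Theorem~\ref{thm:thetaB}). The only step warranting a moment's care is the identification $F^{n}_{m}\circ K_{T_{m}}\cong K_{T_{n}}\circ J^{m}_{n}$, i.e.\ that the pointwise left Kan extension defining $F^{n}_{m}$ genuinely extends $K_{T_{n}}\circ J^{m}_{n}$ along the fully faithful $K_{T_{m}}$; this is immediate, either from the general lemma on Kan extensions along fully faithful functors or by noting directly that $F^{n}_{m}(\mathbb T_{m}(-,\vec m))$ is the free $\mathbb T_{n}$-model on $Y\vec m$, namely $\mathbb T_{n}(-,\vec m)=K_{T_{n}}(J^{m}_{n}\vec m)$.
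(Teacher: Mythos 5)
Your proof is correct and is essentially identical to the paper's own argument: both deduce faithfulness of $J^{m}_{n}$ from the faithfulness of $F^{n}_{m}$ (Theorem~\ref{thm:cellulartheories}(4)) via the naturally commuting square $F^{n}_{m}\circ K_{T_{m}}\cong K_{T_{n}}\circ J^{m}_{n}$ obtained from the Kan extension formula of Proposition~\ref{prop:models}(3) and full faithfulness of the restricted Yoneda embeddings. Your added remark verifying the isomorphism directly on representables (free models) is a nice sanity check but not a departure from the paper's reasoning.
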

\begin{proof}
By Proposition~\ref{prop:models} we have $F^n_m = Lan_{K_{T_{m}}}(K_{T_n} \circ J^{m}_{n}):\Mod(\mathbb T_{m}) \to \Mod(\mathbb T_{n})$ where $K_{T_{m}}$ and $K_{T_{n}}$ are the restricted Yoneda embeddings for $\mathbb T_{m}$ and $\mathbb T_{n}$ respectively.  Since $K_{T_{m}}$ is fully faithful it follows that we have a natural isomorphism in the square below.
$$\xy
(0,0)*+{\mathbb T_{m}}="00";(30,0)*+{\mathbb T_{n}}="10";(0,-15)*+{\Mod(\mathbb T_{m})}="01";(30,-15)*+{\Mod(\mathbb T_{n})}="11";
{\ar^{J^{m}_{n}} "00"; "10"};{\ar_{K_{T_{m}}} "00"; "01"};{\ar^{K_{T_{n}}}"10";"11"};{\ar_{F^n_m}"01";"11"};
(15,-7)*+{\cong};
\endxy$$
Now both vertical morphisms are fully faithful whilst $F^n_m$ is faithful by Theorem~\ref{thm:cellulartheories}.  Therefore $J^{m}_{n}$ is faithful too.

\end{proof}

\section{Cellular monads and cellular theories}\label{sect:nervous}

In the present section, we briefly describe a generalisation of Theorem~\ref{thm:main}, which captures the models of cellular globular theories as $\omega$-injectives.  We will present our results in the general setting of \cite{Bourke2018Monads}.  In particular, our base $\E$ will be \emph{enriched} in a symmetric monoidal locally presentable category $\V$.  

Given a family $\I$ of morphisms in $\E$, we can form the \emph{enriched category of algebraic injectives}, an object of which consists of $E \in \E$ together with a section 
$e(i,-):\E(X_{i},E) \to \E(Y_{i},E)$ of $\E(\alpha_{i},E):\E(Y_{i},E) \to \E(X_{i},E)$ for each $i \in I$.  As an enriched category $\Ainj(\I)$ can be encoded as the pullback:
\begin{equation*}
\cd{\Ainj(\I) \ar[d]_{V} \ar[r]^{} & \textnormal{SE}[\I,\V] \ar[d]^{} \\
\E \ar[r]^-{K} & \textnormal{Arr}[\I,\V] \hspace{0.2cm} .}
\end{equation*}
Here the right leg $\textnormal{SE}[\I,\V] \to \textnormal{Arr}[\I,\V]$ is the forgetful functor from the enriched category of \emph{split epimorphisms} in $[\I,\V]$ to the enriched category of arrows therein, and $K$ the $\V$-functor sending $C$ to the family $(\E(\alpha_{i},E):\E(Y_{i},E) \to \E(X_{i},E))_{i \in I}$.  

\begin{Example}\label{example:magmoidalcategories}
Consider $\V=\Cat$ and $\E=\Cat$ as a 2-category.  Building on Example~\ref{example:magma}, let $\iota:2 \to 3$ be the inclusion of the discrete category on two objects to the discrete category on three.  An object of $\Ainj(\iota)$ is a category $C$ equipped with a functor $C^{2} \to C$ --- in particular, $\Ainj(\iota)$ is the 2-category of magmas in $\Cat$.
\end{Example}

Now $\E$ itself will be a locally presentable $\V$-category, which comes equipped with a small dense full sub $\V$-category $\A \hookrightarrow \E$.  This is the basic setting of \cite{Bourke2018Monads}.  It is not hard to see --- for instance, using Lemma 4 of \emph{ibid.}~ --- that $\Ainj(\I)$ is itself a locally presentable enriched category, whose enriched forgetful functor $V$ to $\E$ is itself a right adjoint.  

In this setting one also has a good notion of $\A$-theory --- capturing the globular theories when $\A = \Theta_0$ --- and of $\A$-\emph{nervous} monad.  The two notions are equivalent by Theorem 17 of \emph{ibid}.  In the present section we prove our results using monads because their relationship with signatures, of central importance here, is a little cleaner to express.  We also consider these results from the perspective of $\A$-theories at the end of this section.

Following \emph{ibid.}~ the category $\sig$ of \emph{signatures} is the \emph{ordinary} category $[ob \A,\E]$.  Given $\Omega \in \sig$ an $\Omega$-algebra consists of an object $E \in \E$ together with a morphism $\E(A,E) \to \E(\Omega A,E)$ for each $A \in \A$.  $\Omega$-algebras, together with the natural structure on homs, form an enriched category $Alg(\Omega)$.  Now there is a forgetful functor $U:\mnd \to \sig$ sending an enriched monad $T$ to the family $(TA)_{A \in \A}$.  By Theorem 36 of \cite{Bourke2018Monads} this has a left adjoint $F$.  Furthermore by the proof of Proposition 53 of \emph{ibid}, the $\V$-category of $F\Omega$-algebras is, up to isomorphism over $\E$, the $\V$-category $Alg(\Omega)$ just described.

Now although the category of monads is not typically cocomplete it does admit all colimits of free monads on signatures.  In fact the colimit closure $\nerv$ of such free monads is extremely well behaved: it is locally presentable, monadic over $\sig$ and contains exactly the so-called \emph{$\A$-nervous monads.}  For more on nervous monads and proofs of the above claims see \cite{Bourke2018Monads}.

The key signatures for us are the following ones.  Namely, given a pair $(A \in \A,X \in \E)$ we define the signature $A | {X}:ob \A \to \E$ to have value $X$ at $A$ and the initial object $\emptyset$ otherwise.  Thus a $A | {X}$-algebra consists of an object $E$ together with a single morphism $\E(A,E) \to \E(X,E)$.  At $f:X \to Y$ we have the map $A | {f}:A | {X} \to {A} | {Y}$ with value $f$ at $A$.

Evidently there is a natural isomorphism 
\begin{equation*}\label{eq:sig}
\sig(A|{X},\Omega) \cong \E(X,\Omega(A)) \hspace{0.2cm} .
\end{equation*}
Given the morphism of signatures $A|{f}:A|{X} \to {A}|{Y}$ we can form the corresponding morphism $F(A|{f})$ of monads; now a pushout of a coproduct of such maps
\begin{equation*}
\xymatrix{
 F(\Sigma_{i \in I} {A_{i}}|{X_{i}}) \ar[d]_{F(\Sigma_{i \in I} {A_{i}}|{f_{i}})} \ar[r]^-{\langle t_{i} \rangle_{i \in I}} & T \ar[d]^{P_{\I}} \\
 F(\Sigma_{i \in I} {A_{i}}|{Y_{i}}) \ar[r] & T_{\I}
}
\end{equation*}
is specified by a family of triples $(f_{i}:X_{i} \to Y_{i},t_{i}:F ({A_{i}}|{X_{i}}) \to T, A_{i})_{i \in I}$.  What are the algebras for the pushout $T_{\I}$?  Since, by Proposition 28 of \emph{ibid}, semantics sends colimits to limits we have a pullback square as on the left below.
\begin{equation*}
\xymatrix{
\Alg(T_{\I}) \ar[rr] \ar[d]_{\Alg(P_{\I})} && \Alg(F(\Sigma_{i \in I} {A_{i}}|{Y_{i}})) \ar[d] \ar[rr] && \Alg(\Sigma_{i \in I} {A_{i}}|{Y_{i}}) \ar[d]^{\Alg(\Sigma_{i \in I}{A_{i}}|f_{i})} \\
\Alg(T) \ar[rr] && \Alg(F(\Sigma_{i \in I} {A_{i}}|Y_{i})) \ar[rr] && \Alg(\Sigma_{i \in I} {A_{i}}|{X_{i}})
}
\end{equation*}
Since we have horizontal isomorphisms in the right square above, the outer square is a pullback.   Now under the isomorphisms
\begin{equation*}
\mnd(F({A_{i}}|{X_{i}}),T) \cong \sig({A_{i}}|{X_{i}},UT) \cong \E(X_i,TA_{i})
\end{equation*}
the map $t:F({A_{i}}|{X_{i}}) \to T$ bijectively corresponds to a morphism $\overline{t}: X_{i} \to TA_{i}$.  Furthermore a $\Sigma_{i \in I}{A_{i}}|{X_{i}}$-algebra on $E \in \E$ is specified by an ${A_{i}}|{X_{i}}$-algebra structure on $E$ for each $i \in I$.  In these terms, the horizontal composite functor on the bottom row of the above diagram sends the $T$-algebra $(E,e)$ to the $\Sigma_{i \in I} {A_{i}}|{X_{i}}$-algebra whose value at $i \in I$ is the morphism 
\begin{equation*}
\xymatrix{
\E(A_i,E) \ar[r]^-{T} &\E(TA_i,TE) \ar[r]^-{\E(\overline{t_i},e)} & \E(X_i,E) \hspace{0.5cm} .
}
\end{equation*}
Therefore a $T_{\I}$-algebra amounts to a $T$-algebra $(E,e)$, together with a lifting
\begin{equation}\label{eq:lifting}
\xymatrix{
\E(A_i,E) \ar[d]_{T} \ar[r]^{?} & \E(Y_i,E) \ar[d]^{\E(f_i,E)} \\
\E(TA_i,TE) \ar[r]_{\E(\overline{t_i},e)} & \E(X_i,E)
}
\end{equation}
for each $i \in I$.  Morphisms of $T_{\I}$-algebras amount to morphisms of $T$-algebras commuting with such liftings in the evident sense, and they form the morphisms of a $\V$-enriched category in a natural way.

Now transposing $\overline{t}$ through the isomorphism $\E(X_i,TA_i) \cong \Alg(T)(F^{T}X_i,F^{T}A_i)$ to a map $t_i^{*}$ we may form the following pushout 
\begin{equation}\label{eq:pushoutnew2}
\xymatrix{
F^{T}X_i \ar[d]_{F^{T}f_i} \ar[r]^-{t_i^{*}} & F^{T}A_i \ar[d]^{p_i} \\
F^{T}Y_i \ar[r] & P_{i}
}
\end{equation}
in the enriched category of $T$-algebras.  We thereby obtain a corresponding family $(p_{i})_{i \in I}$
of morphisms in $\Alg(T)$.  Using the universal property of the pushout we see that to equip $(E,e)$ with the structure of an algebraic $\I$-injective is equally to give a lifting as on the left below
\begin{equation*}
\xymatrix{
\Alg(T)(F^{T}A_i,(E,e)) \ar[dr]_--{\Alg(T)(t_i^{*},(E,e))} \ar[r]^{?} & \Alg(T)(F^{T}Y_i,(E,e)) \ar[d]^{\Alg(T)(F^{T}f_i,1)} \\
& \Alg(T)(F^{T}X_i,(E,e))
}
\hspace{0.5cm}
\xymatrix{
\E(A_i,E) \ar[dr]_{\E(\overline{t_i},e) \circ T} \ar[r]^{?} & \E(Y_i,E) \ar[d]^{\E(f_i,1)} \\
& \E(X_i,E)
}
\end{equation*}
for each member of $\I$.  Transposing through the adjunction $F^{T} \dashv U^{T}$ and using that the transpose of $t_i^{*}$ is $\E(\overline{t_i},e) \circ T$ as in \eqref{eq:lifting} we conclude that this is precisely to equip $(E,e)$ with the structure of a $T_{\I}$-algebra.  With a similar extension to hom-objects we obtain one direction of:

\begin{Proposition}\label{prop:generalisation}
The following coincide up to isomorphism over $\Alg(T)$.
\begin{itemize}
\item Forgetful functors $\Alg( T_{\I}) \to \Alg(T)$ induced by morphisms of $ T \to  T_{\I}$ where $T \in \nerv$.
\item Forgetful functors $\Ainj(\I) \to \Alg(T)$ for $\I$ a family of morphisms in $\Alg(T)$, each of which is a pushout as in \eqref{eq:pushoutnew2}.
\end{itemize}
\end{Proposition}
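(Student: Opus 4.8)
The forward direction is already in hand: the computation preceding the statement shows that, for $T \in \nerv$, a pushout $T_{\I}$ of a coproduct of free monads $F(\Sigma_{i \in I}{A_{i}}|{f_{i}})$ along a map $\langle t_{i} \rangle_{i \in I}$ has $\Alg(T_{\I}) \cong \Ainj(\I)$ over $\Alg(T)$, where $\I$ is the associated family of pushouts \eqref{eq:pushoutnew2}.  So the plan is to establish the reverse direction: that every forgetful functor $\Ainj(\I) \to \Alg(T)$ with $\I$ presented by pushouts of the form \eqref{eq:pushoutnew2} arises, up to isomorphism over $\Alg(T)$, from such a monad $T_{\I}$.

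First I would unpack the data.  Each $p_{i} \in |\I|$ is equipped with a chosen presentation as a pushout \eqref{eq:pushoutnew2}, which supplies a morphism $f_{i}:X_{i} \to Y_{i}$ in $\E$, an object $A_{i} \in \A$, and a $T$-algebra map $t_{i}^{*}:F^{T}X_{i} \to F^{T}A_{i}$.  Transposing $t_{i}^{*}$ along $F^{T} \dashv U^{T}$ gives $\overline{t_{i}}:X_{i} \to TA_{i}$, and transposing this along the composite isomorphism $\mnd(F({A_{i}}|{X_{i}}),T) \cong \sig({A_{i}}|{X_{i}},UT) \cong \E(X_{i},TA_{i})$ yields a monad morphism $t_{i}:F({A_{i}}|{X_{i}}) \to T$.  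Next I would assemble the coproduct signature morphism $\Sigma_{i \in I}{A_{i}}|{f_{i}}$ together with $\langle t_{i} \rangle_{i \in I}:F(\Sigma_{i \in I}{A_{i}}|{X_{i}}) \to T$ and --- since $T$ lies in the colimit-closed subcategory $\nerv$ --- form the pushout $T_{\I}$ therein.  Then I would run the computation preceding the statement in reverse: by that argument $\Alg(T_{\I})$ is, compatibly over $\Alg(T)$, the category of $T$-algebras equipped with a lifting as in \eqref{eq:lifting}, and the earlier identification of such liftings with algebraic $\I$-injective structure gives $\Alg(T_{\I}) \cong \Ainj(\I)$.

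Most of the work is bookkeeping: one must check that the two transpositions above are mutually inverse, so that the pushout \eqref{eq:pushoutnew2} recovered from $T_{\I}$ agrees with the given presentation of $p_{i}$.  The step I expect to be most delicate is the clause abbreviated above as ``a similar extension to hom-objects'': one needs the comparison to be an isomorphism of $\V$-categories, not merely of underlying ordinary categories.  For this I would compare limit presentations --- $\Alg(T_{\I})$ is a limit of algebra categories by Proposition 28 of \cite{Bourke2018Monads}, while $\Ainj(\I)$ is the pullback displayed at the start of this section --- and observe that both are computed as the same limit in $[\I,\V]$, so that the isomorphism of hom-objects follows from the universal property rather than a separate hom-wise verification.
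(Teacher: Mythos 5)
Your proposal is correct and follows essentially the same route as the paper: the paper's own proof of the reverse direction is literally the one-line remark that it ``simply involves chasing backwards through the above constructions,'' which is exactly the unpacking-and-transposing you carry out (recovering $f_i$, $A_i$ and $\overline{t_i}$ from the given pushout presentation, rebuilding $t_i:F(A_i|X_i)\to T$, forming $T_{\I}$ in $\nerv$, and rerunning the identification of $T_{\I}$-algebras with $\I$-injectives). Your extra care about the enrichment of the comparison, via the common limit presentation of $\Alg(T_{\I})$ and $\Ainj(\I)$, is a reasonable way to discharge the step the paper abbreviates as ``a similar extension to hom-objects.''
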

\begin{proof}
The reverse direction simply involves chasing backwards through the above constructions.
\end{proof}

Given a family $\B$ of morphisms of $\E$ let $$F(\A|{\B})=(F(A|{f_i}):F(A|{X_i}) \to F({A}|{Y_i}))_{(A,i) \in \A \times \B}$$ be the induced family in $\nerv$.  By definition, a morphism in $\nerv$ is $F(\A|{\B})$-$\lambda$-cellular for an ordinal $\lambda$ if it is a $\lambda$-composite of pushouts of coproducts of maps in $F(\A|{\B})$; an object is $\lambda$-cellular if the unique map to it from the initial monad $\textnormal{Id}_{\E}$ is $\lambda$-cellular.  Accordingly, on taking $\V$-categories of algebras, a $\lambda$-cellular monad gives rise to a $\lambda^{+}$-cochain of $\V$-categories and $\V$-functors, with base $\Alg{(\textnormal{Id}_{\E})} \cong \E$.

\begin{Theorem}\label{thm:nervous}
Let $\A \hookrightarrow \E$ be a small dense full subcategory of a locally presentable category and $\B$ a family of morphisms of $\E$.  Given an ordinal $\lambda$, towers of $(\A,\B)$-$\lambda$-injectives coincide, up to isomorphism of $\lambda^{+}$-cochains, with towers of algebras of $F(\A|{\B})$-$\lambda$-cellular monads.\end{Theorem}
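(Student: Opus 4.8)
The plan is a transfinite induction along the two towers, with Proposition~\ref{prop:generalisation} supplying the successor step and Proposition~28 of \cite{Bourke2018Monads} --- semantics sends colimits of monads to limits of $\V$-categories of algebras --- the limit step. At the base, $\Ainj(\I_0)=\E$ is, up to isomorphism over $\E$, the $\V$-category $\Alg(\mathrm{Id}_\E)$ of algebras for the initial monad, which is the zeroth term of every cellular presentation. The induction hypothesis at stage $n$ is an isomorphism of truncated $\lambda^{+}$-cochains $(\Alg(T_k))_{k\leq n}\cong(\Ainj(\I_k))_{k\leq n}$ over $\E$, where $(T_k)_{k\leq n}$ is an initial segment of an $F(\A|{\B})$-cellular presentation; in particular each $T_k\in\nerv$.

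For the successor step, suppose we are given a tower of $(\A,\B)$-$\lambda$-injectives. By condition $(\dag)$ the family $\I_{n+1}$ of maps of $\Ainj(\I_n)$ consists of pushouts of the form \eqref{eq:pushout}: pushouts of $F_n\alpha:F_nA\to F_nB$ with $\alpha:A\to B\in|\B|$ along some $u:F_nA\to F_nX$ with $X\in\A$. Transporting this family along the isomorphism $\Ainj(\I_n)\cong\Alg(T_n)$ --- under which the left adjoint $F_n$ is identified with $F^{T_n}$ --- exhibits it as a family of pushouts of the shape \eqref{eq:pushoutnew2} in which each $f_i$ lies in $|\B|$ and each $A_i$ in $\A$. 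Proposition~\ref{prop:generalisation} then identifies $\Ainj(\I_{n+1})\to\Alg(T_n)$, up to isomorphism over $\Alg(T_n)$, with $\Alg(T_{n+1})\to\Alg(T_n)$ for the monad $T_{n+1}$ obtained from $T_n$ by the pushout of the coproduct $F(\Sigma_i A_i|{f_i})$ along the transposed map $\langle t_i\rangle:F(\Sigma_i A_i|{X_i})\to T_n$ in $\nerv$. Since each $f_i\in|\B|$ and $A_i\in\A$, the maps $F(A_i|{f_i})$ lie in $F(\A|{\B})$, so $(T_k)_{k\leq n+1}$ remains an initial segment of an $F(\A|{\B})$-cellular presentation; composing with the stage-$n$ isomorphism and the evident identification $\Ainj(\I_{n+1})\cong\Ainj(\text{transported }\I_{n+1})$ yields the isomorphism of truncated cochains at stage $n+1$.

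At a limit ordinal $\gamma\leq\lambda$ both towers are smooth: $\Ainj(\I_\gamma)=\lim_{k<\gamma}\Ainj(\I_k)$ by definition of a tower of $\lambda$-injectives, and $\Alg(T_\gamma)=\lim_{k<\gamma}\Alg(T_k)$ because a cellular presentation is a smooth chain of monads and semantics preserves this colimit (Proposition~28 of \cite{Bourke2018Monads}); moreover $T_\gamma\in\nerv$ since $\nerv$, being a colimit closure, is closed under these colimits. The induction hypothesis then induces, by the universal property of the limit, an isomorphism $\Alg(T_\gamma)\cong\Ainj(\I_\gamma)$ compatible with all earlier stages. This completes the passage from a tower of $(\A,\B)$-$\lambda$-injectives to an $F(\A|{\B})$-cellular presentation. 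The reverse passage runs the same recursion through the reverse direction of Proposition~\ref{prop:generalisation} (and of the limit formula), and since the two directions of that proposition are mutually inverse, so are the two passages, up to isomorphism of $\lambda^{+}$-cochains.

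The one genuinely non-formal point is the matching of data at the successor step: one must check that the description $(\dag)$ of an admissible link --- a morphism of $|\B|$ together with an object of $\A$ --- corresponds under Proposition~\ref{prop:generalisation} to exactly one cellular step built from $F(\A|{\B})$, so that parametrisation by $(\A,\B)$ on the injectivity side translates precisely into $F(\A|{\B})$-cellularity on the monad side. This translation, together with the fact that $\nerv$ is closed under the relevant colimits, is the crux; everything else is assembled from Proposition~\ref{prop:generalisation} and Proposition~28 of \cite{Bourke2018Monads}, the substantive work having been done in establishing the former.
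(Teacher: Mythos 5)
Your proof is correct and follows exactly the route the paper intends: the paper's own proof consists of the single sentence that, given Proposition~\ref{prop:generalisation}, the result follows by the same transfinite induction as in Theorem~\ref{thm:main}, which is precisely the base/successor/limit argument you have written out (with the simplification, which you exploit correctly, that isomorphisms rather than equivalences make the limit step a strict one). Your spelled-out version, including the observation that condition $(\dag)$ translates exactly into one $F(\A|\B)$-cellular step and that $\nerv$ is closed under the relevant colimits, fills in details the paper leaves implicit but adds no new ideas.
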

\begin{proof}
Given Proposition~\ref{prop:generalisation} the proof follows inductively, arguing as in \ref{thm:main}. 
\end{proof}

\begin{Examples}
Consider again $\E= \V=\Cat$, and $\mathbb F \hookrightarrow \Cat$ the dense full sub-2-category of finite cardinals, viewed as finite discrete categories and let 
$$\B=\{j_{0}:\varnothing \to (\bullet),j_{1}:(\bullet \hspace{0.2cm} \bullet) \to (\bullet \to \bullet), j_{2}:(\bullet \rightrightarrows \bullet) \to (\bullet \to \bullet)\}$$
 be the generating cofibrations for the folk model structure on $\Cat$ \cite{Lack2007Homotopy-theoretic}.  
  Then structures such as monoidal categories --- whose definition involves \emph{no equations between objects} --- can be described as $(\mathbb F,\B)$-iterated algebraic injectives in $\Cat$.  
  
For a simple case, building on Example~\ref{example:assmagma}, let us describe categories $C$ equipped with a tensor product $m:C^{2} \to C:(x,y) \mapsto xy$ and an associator $\alpha:(xy)z \to x(yz)$ satisfying MacLane's pentagon equation as $3$-injectives in $\Cat$.  From Example~\ref{example:magmoidalcategories} objects of $\Ainj(i_{0})$ are categories equipped with a tensor product.  The terms $t=(xy)z, s=x(yz)$ in three variables are elements of $UF3$  and, together, correspond to a single map $\langle t,s \rangle: F_{1}(\bullet \hspace{0.2cm} \bullet) \to F_{1}3$.  Forming the pushout as in the second square below, we see that to equip $(C,m)$ with the structure of an object of $\Ainj(i_{1})$ is to give a natural transformation $\alpha:(xy)z \to x(yz)$.  
\begin{equation*}
\xymatrix{
\varnothing \ar[d]_{j_0} \ar[r]^-{} & 2 \ar[d]^{i_{0}} & F_{1}(\bullet \hspace{0.2cm} \bullet) \ar[d]_{F_{1}j_{1}} \ar[r]^-{(t,s)} & F_{1}3 \ar[d]^{i_{1}} & F_{2}(\bullet \rightrightarrows \bullet) \ar[d]_{F_{2}j_{2}} \ar[r]^-{(u,v:l \rightrightarrows r)} & F_{2}4 \ar[d]^{i_{2}}\\
(\bullet) \ar[r]^-{} & 3 & F_{1}(\bullet \to \bullet) \ar[r]^-{} & F_{1}3/\langle t \to s \rangle & F_{2}(\bullet \to \bullet) \ar[r]^-{} & F_{2}4/\langle u = v \rangle
}
\end{equation*}
Now let $l=((wx)y)z$ and $r=w(x(yz))$.  The two paths $u,v:l\rightrightarrows r$ of the pentagon specify a parallel pair in $U_{2}F_{2}4$ and so correspond to a single map $F_{2}(\bullet \rightrightarrows \bullet) \to F_{2}4$.  Forming the pushout in the third square below, we see that  $(C,m,\alpha)$ admits the structure of an object of $\Ainj(i_{2})$ precisely if the pentagon equation for $\alpha$ holds.

Building on this example, we can use $(\mathbb F,\B)$-$3$-injectivity to capture any monadic algebraic structure borne by categories, whose basic operations are of the form $C^{n} \to C$ for finite cardinals $n$, and whose defining equations only involving equalities between morphisms rather than objects.  A precise characterisation of the 2-monads describing such categorical structures was given in Section 6 of \cite{Bourke2013On-semiflexible}, where they were called the \emph{pie presentable} strongly finitary 2-monads.  Indeed using Theorem 34 of \emph{ibid}, Theorem~\ref{thm:nervous} above and the pseudo-variant of Theorem 17 of \cite{Bourke2016Awfs2}, one can show that $(\mathbb F,\B)$-$\lambda$-injectives are $(\mathbb F,\B)$-$3$-injectives for all higher ordinals $\lambda$, with both classes coinciding as exactly the algebras for pie presentable strongly finitary 2-monads. 

\end{Examples}

As mentioned already, Theorem~\ref{thm:nervous} can be reformulated using $\A$-theories rather than monads.  By Theorem 17 of \cite{Bourke2018Monads} we have an equivalence of categories $\Psi:\th \cong \nerv$.  Translating through this equivalence we obtain a forgetful functor $U:\th \to \sig$ with left adjoint $F$, abusively using the same symbols $U$ and $F$ as before.  Since, by Theorem 24 of \emph{ibid}, $\Psi$ commutes with semantics in the sense that $\Mod(\mathbb T) \simeq \Alg(\Psi\mathbb T)$, it follows that towers of algebras of $F(\A|\B)$-$\lambda$-cellular monads coincide, to within equivalence, with towers of models of $F(\A|\B)$-$\lambda$-cellular theories.  Thus we may reformulate Theorem~\ref{thm:nervous} using the language of $\A$-theories, as follows.

\begin{Theorem}\label{thm:theories}
Let $\A \hookrightarrow \E$ be a small dense full subcategory of a locally presentable category and $\B$ a family of morphisms of $\E$.  Given an ordinal $\lambda$, towers of $(\A,\B)$-$\lambda$-injectives coincide, up to equivalence of $\lambda$-cochains, with towers of models of $F(\A|{\B})$-$\lambda$-cellular theories.
\end{Theorem}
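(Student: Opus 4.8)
The plan is to transport Theorem~\ref{thm:nervous} along the equivalence $\Psi \colon \th \simeq \nerv$ of Theorem 17 of \cite{Bourke2018Monads}, as the discussion preceding the statement already indicates.  First I would record that $\Psi$ is an equivalence \emph{over} $\sig$: it commutes with the forgetful functors $U$ to signatures, and hence --- being an equivalence --- also with their left adjoints $F$ up to natural isomorphism.  It follows that $\Psi$ carries the generating family $F(\A|{\B})$ in $\th$ to the family $F(\A|{\B})$ in $\nerv$ up to isomorphism, and that it preserves pushouts, coproducts and transfinite composites of such maps; therefore $\Psi$ sends any chain presenting an $F(\A|{\B})$-$\lambda$-cellular theory to a chain presenting an $F(\A|{\B})$-$\lambda$-cellular monad, and, since $\Psi$ is an equivalence, every such cellular monad arises this way up to isomorphism of chains.

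Next I would invoke Theorem 24 of \emph{ibid.}, which says that $\Psi$ commutes with semantics: there is an equivalence $\Mod(\mathbb T) \simeq \Alg(\Psi\mathbb T)$, natural in $\mathbb T$ and compatible with the forgetful functors to $\E$.  Applying $\Mod(-)$ to a chain $\mathbb T_{0} \to \mathbb T_{1} \to \cdots$ presenting an $F(\A|{\B})$-$\lambda$-cellular theory then produces a cochain that is levelwise equivalent, via the equivalences $\Mod(\mathbb T_{n}) \simeq \Alg(\Psi\mathbb T_{n})$, to the cochain obtained by applying $\Alg(-)$ to the corresponding chain of $F(\A|{\B})$-$\lambda$-cellular monads.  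All the cochains in play are smooth and isofibrant --- this is built into the definition of a tower on the injectivity side, and on the theory and monad sides it follows, as in Section~\ref{sect:iterated}, from the fact that taking models or algebras sends cellular colimits to limits over the base --- so these levelwise equivalences assemble into an equivalence of cochains, in the sense introduced before Theorem~\ref{thm:main}.  Combining this with Theorem~\ref{thm:nervous} then yields the asserted two-way coincidence.

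I expect the only genuine obstacle to be checking that the semantic equivalence $\Mod(\mathbb T) \simeq \Alg(\Psi\mathbb T)$ is natural \emph{enough} in $\mathbb T$ to be compatible with the connecting functors of the two towers, so that it truly assembles into an equivalence of cochains rather than a mere levelwise one; this is precisely what Theorem 24 of \cite{Bourke2018Monads} provides, so granting that input the remaining work --- verifying smoothness and isofibrancy, and that equivalence of smooth isofibrant cochains is a well-defined equivalence relation (Appendix~\ref{sect:cochains}) --- is routine bookkeeping.
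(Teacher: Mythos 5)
Your proposal follows exactly the paper's route: the paper derives Theorem~\ref{thm:theories} from Theorem~\ref{thm:nervous} by transporting along the equivalence $\Psi:\th \simeq \nerv$ of Theorem 17 of \cite{Bourke2018Monads}, using that $\Psi$ is compatible with the forgetful functors to $\sig$ (hence with the free functors $F$ and with $F(\A|{\B})$-cellularity) and that it commutes with semantics by Theorem 24 of \emph{ibid}. Your additional remarks on smoothness, isofibrancy and the assembly of levelwise equivalences into a cochain equivalence are exactly the bookkeeping the paper delegates to Appendix~\ref{sect:cochains}, so the argument matches.
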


\begin{Example}
In Section 3.11 of \cite{Malts2010} Maltsiniotis shows that the cellular globular theories, in the sense of Section~\ref{sect:cellular}, are exactly the $\omega$-cellular theories relative to a set of maps
$$
\I = \{\Theta_{0}[\vec m,n] \to \Theta_{0}[\vec m,n]^{\prime} \in \GTh: \vec m \in \Theta_0,n \in \mathbb N\} \hspace{0.2cm} .
$$
These globular theories are characterised by universal properties --- a morphism $\Theta_{0}[\vec m,n] \to \mathbb T$ amounts to a parallel pair $(n) \rightrightarrows \vec{m}$ in $\mathbb T$, whilst a morphism from $\Theta_{0}[\vec m,n]^{'}$ to $\mathbb T$ amounts to a morphism $(n+1) \to \vec{m}$.  Furthermore $\Theta_{0}[\vec m,n] \to \Theta_{0}[\vec m,n]^{\prime}$ is the evident map determined by these universal properties.

To explain this using  the framework of the present section, we work in ordinary $\Set$-enriched category theory, with base $\E = [\mathbb G^{op},\Set]$ and $\A=\Theta_0$.   Then the $\Theta_0$-theories are --- by Examples 42(v) of \cite{Bourke2018Monads} --- precisely the globular theories.  Moreover, the forgetful functor $U:\GTh \to [ob \Theta_0,[\mathbb G^{op},\Set]]$ to signatures --- to within isomorphism --- sends $J:\Theta_0 \to \mathbb T$ to the signature $\vec{m} \mapsto \mathbb \mathbb T(JD-,\vec{m})$.

Now consider the set $\B = \{ j_n:S(n) \to Y(n+1): n \in \mathbb N\}$ of boundary inclusions.  To give a morphism of globular theories $F(\vec{m}|{S(n)}) \to \mathbb T$ is to give a morphism of signatures $\vec{m}|{S(n)} \to UT$.  In turn, this amounts to a morphism $S(n) \to T(JD-,\vec{m})$; that is, a parallel pair $(n) \rightrightarrows \vec{m} \in T$.  Therefore $F(\vec{m}|{S(n)}) \cong \Theta_{0}[\vec m,n]$.  Continuing in this way we see, indeed, that the set $\I$ considered by Maltisiniotis is nothing but our canonical set of morphisms $F({\Theta_{0}}|{\B})$.  Given this identification, Theorem~\ref{thm:main} becomes  a special case of Theorem~\ref{thm:theories} above.
\end{Example}

\appendix

\section{Fibrant cochains}\label{sect:cochains}

In the present section we prove the results about fibrant cochains needed within the paper.

\begin{Proposition}\label{prop:FibrantExamples}
Towers of models of cellular globular theories and of $\lambda$-injectives are smooth and isofibrant.\end{Proposition}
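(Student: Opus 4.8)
The plan is to separate the two claims --- smoothness and isofibrancy --- and in each case to reduce the statement about the whole tower to the behaviour of a single successor link map together with a formal closure property.

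Smoothness requires almost nothing. For a tower of $\lambda$-injectives it is exactly clause~(3) of the definition of such a tower. For the tower of models of a cellular globular theory $\mathbb T=\mathrm{colim}_{n}\mathbb T_{n}$, we have already recorded (following Section~5.3 of \cite{Bourke2018Monads}) that $\Mod(-)$ sends a colimit of globular theories to a limit of categories over $\Mod(\Theta_0)$; applying this to the defining colimit and to each of its truncations gives $\Mod(\mathbb T_{\gamma})=\lim_{n<\gamma}\Mod(\mathbb T_{n})$ at every limit ordinal $\gamma$, which is smoothness.

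For isofibrancy, the first step is to show that each successor link map is an isofibration. For a tower of $\lambda$-injectives this amounts to checking that a forgetful functor $V\colon\Ainj(\I)\to\C$ is an isofibration: given $(C,c)\in\Ainj(\I)$ and an isomorphism $\phi\colon B\to C$ in $\C$, transport the algebraic injective structure by setting $b(i,g)=\phi^{-1}\circ c(i,\phi\circ g)$; then $(B,b)\in\Ainj(\I)$ and $\phi\colon(B,b)\to(C,c)$ is an isomorphism of algebraic injectives lying over $\phi$. For the tower of models the successor map is $U^{n+1}_{n}=(J^{n}_{n+1})^{\star}$, and since $J^{n}_{n+1}$ is the identity on objects one transports a $\mathbb T_{n+1}$-model $X$ along an isomorphism $Y\to(J^{n}_{n+1})^{\star}X$ of $\mathbb T_{n}$-models by conjugating the action of $X$ by the components of the isomorphism: the result is a presheaf on $\mathbb T_{n+1}$ isomorphic to $X$ --- hence still preserving globular products --- which restricts along $J^{n}_{n+1}$ to $Y$. (Alternatively one may read this off from Proposition~\ref{prop:isomorphism}, which exhibits $U^{n+1}_{n}$, up to isomorphism over $\Mod(\mathbb T_{n})$, as a forgetful functor of the first kind.)

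The second step is to pass from successor maps to all connecting maps, and this is where I expect the only genuine care to be needed. The cleanest device is to recall that the isofibrations are exactly the right class of a weak factorisation system on $\CAT$ (the ``folk'' one), and that the right class of any weak factorisation system is closed under composition and under the inverse limit of a smooth tower of maps from the right class; combining this with smoothness then shows that every connecting functor $X_{n}\to X_{m}$ with $m<n$, whether $n$ is a successor or a limit ordinal, is an isofibration. If one prefers to avoid citing this WFS, the limit-ordinal case must instead be argued directly: lift a given isomorphism in $X_{m}$ through the tower by transfinite recursion --- using the successor case at successor stages --- and then use smoothness to assemble the resulting compatible family of lifted isomorphisms into a single isomorphism in $X_{n}=\lim_{k<n}X_{k}$.
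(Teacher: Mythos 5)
Your proposal is correct and follows essentially the same route as the paper: smoothness is by definition (respectively, by the previously observed fact that taking models turns colimits of globular theories into limits of categories), and isofibrancy reduces to the successor link maps via the closure of isofibrations, as a right lifting class, under transfinite composition, the successor cases being handled by transporting structure along an isomorphism exactly as you describe. The only cosmetic difference is that for the model case the paper phrases the transport argument via repleteness of $\Mod(\mathbb T)$ in $[\mathbb T^{op},\Set]$ and the fact that $[J^{op},1]$ is an isofibration because $J$ is bijective on objects, which is the same computation you perform directly.
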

\begin{proof}
Smoothness in the first case was observed in Section~\ref{sect:cellular}, whilst in the second case is by definition.  Since isofibrations are defined by a right lifting property, they are stable under transfinite cocomposition.  Given this, a smooth cochain $X$ is isofibrant just when each link map $X_{n+1} \to X_{n}$ is an isofibration.  
Both classes of tower being smooth, it suffices to show that if $J:\mathbb T \to \mathbb S$ is a morphism of globular theories then $J^{*}:\Mod(\mathbb S) \to \Mod(\mathbb T)$ and likewise that each forgetful functor of the form $V:\Ainj(\I) \to \C$ is an isofibration.  

Since the structure of an algebraically injective object can be uniquely transferred along an isomorphism, the latter case is clear.  As for the former, since each category of models is a \emph{replete} full subcategory of the corresponding presheaf category, it suffices to show that $[J^{op},1]:[\mathbb S^{op},\Set] \to [\mathbb T^{op},\Set]$ is an isofibration.  This follows easily from the fact that $J$ is bijective on objects.
\end{proof}

The following results about the good properties of smooth isofibrant cochains are naturally 2-categorical --- as such, we state and prove them using general 2-categories rather than just $\CAT$.  Here a 2-category refers to a \emph{strict} 2-category.  Given 2-categories $\A$ and $\B$ the collection of strict 2-functors, pseudonatural transformations and modifications between them forms a 2-category $Ps(\A,\B)$.  Limits and isofibrations are representable notions in a 2-category --- thus, a cochain $(X_{n})_{n \in \lambda} \in \C$ is said to be smooth/isofibrant if for each $C \in \C$ the induced cochain $\C(C,X_{n})_{n \in \lambda}$ of categories is smooth/isofibrant.  

\begin{Lemma}
Consider a 2-category $\C$ and a pseudonatural transformation $f:X \rightsquigarrow Y$ of $\lambda$-cochains in $\C$.  If $Y$ is smooth and isofibrant, then there exists a 2-natural transformation $g:X \to Y$ and invertible modification $\phi:f \cong g$.
\end{Lemma}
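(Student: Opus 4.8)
The plan is to rectify the pseudonatural transformation $f$ into a $2$-natural one by repairing its components one ordinal at a time, using isofibrancy of $Y$ to absorb the coherence isomorphisms $f_{mn} \colon Y(\leq^m_n) \circ f_n \cong f_m \circ X(\leq^m_n)$ and smoothness of $Y$ to pass through limit stages. Concretely, I would define a new cochain morphism $g$ together with an invertible modification $\phi \colon f \cong g$ by transfinite recursion on $n \leq \lambda$. At $n=0$ set $g_0 = f_0$ and $\phi_0 = \mathrm{id}$. At a successor $n = k+1$, suppose $g_k \colon X_k \to Y_k$ and $\phi_k \colon f_k \cong g_k$ are already built so that $g$ is strictly natural on the initial segment $[0,k]$ and $\phi$ is a modification on that segment. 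Using the coherence isomorphism of $f$ at the link $k+1 \to k$ together with $\phi_k$, we obtain an isomorphism $\psi \colon Y(\leq^{k+1}_k) \circ f_{k+1} \cong g_k \circ X(\leq^{k+1}_k)$ in the hom-category, which we regard as an isomorphism with codomain lying over $Y_k$. Since $Y_{k+1} \to Y_k$ is an isofibration, $\psi$ lifts to an isomorphism $\phi_{k+1} \colon f_{k+1} \cong g_{k+1}$ with $Y(\leq^{k+1}_k) \circ g_{k+1} = g_k \circ X(\leq^{k+1}_k)$ on the nose; this defines $g_{k+1}$ and makes the new square strictly commute, while $\phi_{k+1}$ extends the modification. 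The coherence conditions for the remaining (composite) link maps out of $k+1$ hold automatically because they are determined by the single link $k+1 \to k$ in a cochain.

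At a limit ordinal $\gamma \leq \lambda$, smoothness of $Y$ gives $Y_\gamma = \lim_{n<\gamma} Y_n$, and the already-constructed strictly commuting cone $(g_n)_{n<\gamma}$ over $X_\gamma \to X_n \to Y_n$ induces a unique $g_\gamma \colon X_\gamma \to Y_\gamma$ with $Y(\leq^\gamma_n) \circ g_\gamma = g_n \circ X(\leq^\gamma_n)$ for all $n<\gamma$; the isomorphisms $\phi_n$ (which are compatible as a modification on $[0,\gamma)$) assemble, again by the universal property of the limit applied in the arrow $2$-category, into a unique invertible $\phi_\gamma \colon f_\gamma \cong g_\gamma$ whose restriction along each link is $\phi_n$. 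This completes the recursion, and $g = (g_n)_{n\leq\lambda}$ is a $2$-natural transformation with $\phi \colon f \cong g$ an invertible modification.

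The main obstacle is bookkeeping the coherence data correctly at the successor step: one must check that the isomorphism $\psi$ fed to the isofibration is the \emph{right} one, namely that after lifting, the resulting $g_{k+1}$ not only makes the immediate square commute but is consistent with \emph{all} composite links to lower stages --- this is where the cochain structure (as opposed to a general diagram shape) does the work for us, since every composite $\leq^{k+1}_m$ factors through $\leq^{k+1}_k$, so strict commutativity at the single new link propagates downward. A secondary point requiring care is verifying that $\phi$ is genuinely a modification: the modification axiom at each link $n+1 \to n$ is precisely the equation encoding that $\phi_{n+1}$ was chosen to lift $\psi$, and at limit links it follows from uniqueness in the limit. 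I would carry this out using the representable definition of smoothness and isofibrancy given just above, so that all the arguments take place in ordinary categories of the form $\C(C,-)$ and the lifting and limit constructions are the classical ones.
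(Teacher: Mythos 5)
Your proposal is correct and follows essentially the same transfinite recursion as the paper's proof: lift through the isofibration at successor stages and use the universal property of the limit (for both the $1$-cell $g_\gamma$ and the $2$-cell $\phi_\gamma$) at limit stages. The only step the paper spells out that you compress is the verification --- using the modification axiom for the already-constructed data together with pseudonaturality of $f$ --- that the candidate $2$-cells over the $Y_n$ form a compatible family before invoking the universal property of $\C(X_\gamma,Y_\gamma)$ at a limit ordinal.
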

\begin{proof}
If $\lambda = 0$ then it is trivial.  For $\lambda$ non-zero, we define $g_{0}=f_{0}:X_{0} \to Y_{0}$ and $\theta_0=id$.  For the inductive step suppose that $\alpha < \lambda$ and that we have defined $g_n:X_n \to Y_n$ for $n < \alpha$ naturally in $n$, as well as invertible 2-cells $\phi_n:f_n \cong g_n$ satisfying the modification equation at $m < n < \alpha$.  In summary, we have an invertible modification $\theta_{< \alpha}:f_{< \alpha} \cong g_{< \alpha}$.  To prove the result, by transfinite induction, we must extend it to $\theta_{\leq \alpha}:f_{\leq \alpha} \cong g_{ \leq \alpha}$.

If $\alpha = \beta + 1$ we have the diagram below left.
$$\xy
(0,0)*+{X_{\beta+1}}="11";
(22,0)*+{Y_{\beta+1}}="31"; (0,-20)*+{X_{\beta}}="12";(22,-20)*+{Y_{\beta}}="32";
{\ar^{y^{\beta+1}_{\beta}} "31"; "32"}; 
{\ar@/^1pc/^{f_\beta} "12"; "32"}; 
{\ar@/_1pc/_{g_\beta} "12"; "32"}; 
{\ar_{x^{\beta+1}_{\beta}} "11"; "12"}; 
{\ar^{f_{\beta+1}} "11"; "31"}; 
{\ar@{=>}_{\theta_\beta}(12,-18)*+{};(12,-23)*+{}};
{\ar@{=>}_{f^{\beta+1}_{\beta}}(12,-4)*+{};(12,-10)*+{}};
\endxy
\hspace{1cm}
\xy
(0,-10)*+{=};
\endxy
\hspace{1cm}
\xy
(0,0)*+{X_{\beta+1}}="11";
(22,0)*+{Y_{\beta+1}}="31"; (0,-20)*+{X_{\beta}}="12";(22,-20)*+{Y_{\beta}}="32";
{\ar^{y^{\beta+1}_{\beta}} "31"; "32"}; 
{\ar_{g_\beta} "12"; "32"}; 
{\ar_{x^{\beta+1}_{\beta}} "11"; "12"}; 
{\ar@/^1pc/^{f_{\beta+1}} "11"; "31"}; 
{\ar@/_1pc/_{g_{\beta+1}} "11"; "31"}; 
{\ar@{=}^{}(10,-10)*+{};(10,-16)*+{}};
{\ar@{=>}_{\theta_{\beta+1}}(13,3)*+{};(13,-2)*+{}};
\endxy
$$

Since $\C(X_{\beta + 1},y^{\beta +1}_{\beta}):\C(X_{\beta + 1},Y_{\beta +1}) \to \C(X_{\beta + 1},Y_{\beta})$ is an isofibration there exists $\theta_{\beta +1}$ and $g_{\beta+1}$ as on the right, making the two diagrams equal.  The equation is precisely the modification condition at $\beta < \beta+1$ so that we have $g_{ \leq \beta+1}:X_{\leq \beta+1} \to Y_{\leq \beta+1}$ and $\theta_{\leq \beta+1}:f_{\leq \beta+1} \cong g_{\leq \beta+1}$, as required.

If $\alpha$ is a limit ordinal then since $Y_{\alpha} = lim_{n < \alpha} Y_{n}$ we obtain a unique morphism $g_{\alpha}:X_{\alpha} \to Y_{\alpha}$ satisfying $y^{\alpha}_{n}  \circ g_{\alpha} = g_{n} \circ y^{\alpha}_{n}$ for each $n < \alpha$.  This gives the desired cochain map $g_{\leq \alpha}:X_{\leq \alpha} \to Y_{\leq \alpha}$.  It remains to provide an invertible 2-cell $\theta_{\alpha}:f_{\alpha} \cong g_{\alpha}$ satisfying the modification condition 
\begin{equation}\label{eq:mod}
\xy
(0,0)*+{X_{\alpha}}="11";
(22,0)*+{Y_{\alpha}}="31"; (0,-20)*+{X_{n}}="12";(22,-20)*+{Y_{n}}="32";
{\ar^{y^{\alpha}_{n}} "31"; "32"}; 
{\ar_{g_n} "12"; "32"}; 
{\ar_{x^{\alpha}_{n}} "11"; "12"}; 
{\ar@/^1pc/^{f_{\alpha}} "11"; "31"}; 
{\ar@/_1pc/_{g_{\alpha}} "11"; "31"}; 
{\ar@{=}^{}(10,-10)*+{};(10,-16)*+{}};
{\ar@{=>}_{\theta_{\alpha}}(13,3)*+{};(13,-2)*+{}};
\endxy
\hspace{1cm}
\xy
(0,-10)*+{=};
\endxy
\hspace{1cm}
\xy
(0,0)*+{X_{\alpha}}="11";
(22,0)*+{Y_{\alpha}}="31"; (0,-20)*+{X_{n}}="12";(22,-20)*+{Y_{n}}="32";
{\ar^{y^{\alpha}_{n}} "31"; "32"}; 
{\ar@/^1pc/^{f_n} "12"; "32"}; 
{\ar@/_1pc/_{g_n} "12"; "32"}; 
{\ar_{x^{\alpha}_{n}} "11"; "12"}; 
{\ar^{f_{\alpha}} "11"; "31"}; 
{\ar@{=>}_{\theta_n}(12,-18)*+{};(12,-23)*+{}};
{\ar@{=>}_{f^{\alpha}_{n}}(12,-4)*+{};(12,-10)*+{}};
\endxy
\end{equation}
for each $n < \alpha$.  Now the composite 2-cells in the diagram above right are of the form $$\phi_{n}:y^{\alpha}_{n} \circ f_{\alpha} \Rightarrow g_{n} \circ x^{\alpha}_{n} = y^{\alpha}_{n} \circ g_{\alpha}$$
and indeed satisfy $y^{n}_{m} \circ \phi_{n} = \phi_{m}$ for all $m < n$.  This is proven by the following equations of 2-cells
$$
\xy
(0,0)*+{X_{\alpha}}="11";
(22,0)*+{Y_{\alpha}}="31"; (0,-20)*+{X_{n}}="12";(22,-20)*+{Y_{n}}="32";
(0,-40)*+{X_{m}}="13";(22,-40)*+{Y_{m}}="33";
{\ar^{y^{\alpha}_{n}} "31"; "32"}; 
{\ar@/^1pc/^{f_n} "12"; "32"}; 
{\ar@/_1pc/_{g_n} "12"; "32"}; 
{\ar_{x^{\alpha}_{n}} "11"; "12"}; 
{\ar^{f_{\alpha}} "11"; "31"}; 
{\ar@{=>}_{\theta_n}(12,-18)*+{};(12,-23)*+{}};
{\ar@{=>}_{f^{\alpha}_{n}}(12,-4)*+{};(12,-10)*+{}};
{\ar_{x^{n}_{m}} "12"; "13"}; 
{\ar^{y^{\alpha}_{n}} "32"; "33"}; 
{\ar_{g_m} "13"; "33"};
{\ar@{=}^{}(10,-30)*+{};(10,-36)*+{}};
\endxy
\hspace{0.5cm}
\xy
(0,-20)*+{=};
\endxy
\hspace{0.5cm}
\xy
(0,0)*+{X_{\alpha}}="11";
(22,0)*+{Y_{\alpha}}="31"; (0,-20)*+{X_{n}}="12";(22,-20)*+{Y_{n}}="32";
(0,-40)*+{X_{m}}="13";(22,-40)*+{Y_{m}}="33";
{\ar^{y^{\alpha}_{n}} "31"; "32"}; 
{\ar^{f_n} "12"; "32"}; 
{\ar_{x^{\alpha}_{n}} "11"; "12"}; 
{\ar^{f_{\alpha}} "11"; "31"}; 
{\ar@{=>}_{f^{\alpha}_{n}}(12,-4)*+{};(12,-10)*+{}};
{\ar_{x^{n}_{m}} "12"; "13"}; 
{\ar^{y^{\alpha}_{n}} "32"; "33"}; 
{\ar@/^1pc/^{f_m} "13"; "33"}; 
{\ar@/_1pc/_{g_m} "13"; "33"};
{\ar@{=>}_{\theta_m}(12,-38)*+{};(12,-43)*+{}};
{\ar@{=>}_{f^{n}_{m}}(12,-24)*+{};(12,-30)*+{}};
\endxy
\hspace{0.5cm}
\xy
(0,-20)*+{=};
\endxy
\xy
(0,0)*+{X_{\alpha}}="11";
(22,0)*+{Y_{\alpha}}="31"; (0,-20)*+{X_{m}}="12";(22,-20)*+{Y_{m}}="32";
{\ar^{y^{\alpha}_{m}} "31"; "32"}; 
{\ar@/^1pc/^{f_n} "12"; "32"}; 
{\ar@/_1pc/_{g_n} "12"; "32"}; 
{\ar_{x^{\alpha}_{m}} "11"; "12"}; 
{\ar^{f_{\alpha}} "11"; "31"}; 
{\ar@{=>}_{\theta_m}(12,-18)*+{};(12,-23)*+{}};
{\ar@{=>}_{f^{\alpha}_{m}}(12,-4)*+{};(12,-10)*+{}};
\endxy
$$
of which the first uses that $\theta:f_{< \alpha} \Rightarrow g_{< \alpha}$ is a modification and the second uses pseudonaturality of $f$.  By the universal property of the limit $\C(X_{\alpha},Y_{\alpha})$ in $\CAT$ therefore there exists a unique 2-cell $\theta_{\alpha}$ which, on postcomposition by the $y^{\alpha}_{n}$ equals $\phi_{n}$; in other words, satisfying the required equation ~\eqref{eq:mod} for a modification depicted above.
\end{proof}

\begin{Proposition}\label{prop:bilimit}
\begin{enumerate}
\item
If $X:\lambda^{op} \to \C$ is a smooth isofibrant cochain then at each limit ordinal $\alpha < \lambda$ the limit $X_{\alpha}=lim_{n < \alpha} X_n$ is a bilimit.
\item Each pseudonatural equivalence between smooth isofibrant $\lambda$-cochains is isomorphic (via a modification) to a cochain equivalence.   In particular, cochain equivalence is an equivalence relation, when restricted to smooth isofibrant cochains.
\end{enumerate}
\end{Proposition}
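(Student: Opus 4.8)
The plan is to deduce both parts from the preceding Lemma, which rectifies any pseudonatural transformation into a smooth isofibrant cochain to a strictly $2$-natural one.

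For part (1), fix an object $C$ of $\C$ and consider the canonical comparison functor from $\C(C,X_{\alpha})$ to the category $Ps(\alpha^{op},\C)(\Delta_{C},X)$ of pseudo-cones and modifications from $C$ to the restricted cochain $(X_{n})_{n<\alpha}$, sending a $1$-cell to the strict (hence pseudo) cone it induces and a $2$-cell to the evident modification; the assertion that $X_{\alpha}$ is a bilimit of $(X_{n})_{n<\alpha}$ is exactly that this functor is an equivalence of categories for every $C$. Fullness and faithfulness are a purely formal consequence of the universal property of $X_{\alpha}$ as the \emph{strict} $2$-limit of $(X_{n})_{n<\alpha}$, which holds since $X$ is smooth: a modification between two strict cones is precisely a family of compatible $2$-cells, and such families correspond bijectively to $2$-cells into $X_{\alpha}$. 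For essential surjectivity I would note that the restriction of $X$ to the ordinal $\alpha$ is again smooth and isofibrant, and that a pseudo-cone with vertex $C$ is nothing but a pseudonatural transformation from $\Delta_{C}$ to this restriction; by the preceding Lemma such a transformation is isomorphic to a $2$-natural one, i.e.\ to a strict cone, which factors uniquely through $X_{\alpha}$. Hence the comparison functor is an equivalence.

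For the first assertion of part (2), if $f:X \rightsquigarrow Y$ is a pseudonatural transformation of smooth isofibrant $\lambda$-cochains with every $f_{n}$ an equivalence, the preceding Lemma gives a $2$-natural $g:X \to Y$ together with an invertible modification $f \cong g$; each component $f_{n} \cong g_{n}$ being invertible, $g_{n}$ is again an equivalence, so $g$ is a cochain equivalence isomorphic to $f$. For the second assertion, reflexivity and transitivity of cochain equivalence are immediate (identities are componentwise equivalences, and componentwise composites of equivalences are equivalences), and require neither smoothness nor isofibrancy. The content is symmetry: given a cochain equivalence $g:X \to Y$, choose a pseudo-inverse $g_{n}^{-1}$ of each $g_{n}$; using the $2$-naturality squares of $g$ and the chosen unit and counit isomorphisms, the family $(g_{n}^{-1})$ assembles into a pseudonatural transformation $Y \rightsquigarrow X$ all of whose components are equivalences. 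Since $X$ is smooth and isofibrant, the first assertion of part (2) applies and yields a genuine cochain equivalence $Y \to X$.

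I expect the main obstacle to be this final rectification step, which is also the only place smoothness and isofibrancy enter: a priori the pointwise inverse of a strict cochain equivalence is merely pseudonatural --- and one must verify the coherence condition making $(g_{n}^{-1})$ into a pseudonatural transformation, a routine but not entirely trivial computation with mates --- and it is the preceding Lemma that straightens it back to a strict transformation. The remainder is bookkeeping with the (strict) $2$-limit universal property.
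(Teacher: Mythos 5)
Your proposal is correct and follows essentially the same route as the paper: both parts are deduced from the preceding rectification Lemma, with part (1) reducing to full faithfulness of the comparison into pseudo-cones (automatic from the strict limit) plus essential surjectivity (the Lemma applied to the restricted cochain $X_{<\alpha}$), and part (2) obtained by rectifying a pseudonatural equivalence and, for symmetry, first passing to the pointwise pseudo-inverse. The paper compresses the symmetry step into the remark that pseudonatural equivalence is an equivalence relation; your explicit treatment of the mate/coherence check for $(g_n^{-1})$ fills in exactly what that remark leaves implicit.
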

\begin{proof}
\begin{enumerate}
\item To say that the cone $\Delta(X_{\alpha}) \to X_{< \alpha} \in [\alpha^{op},\C]$ exhibits the limit $X_{\alpha}$ as a \emph{bilimit} is, by definition, to say that the composite 
$$\C(A,X_{\alpha}) \to [\alpha^{op},\C](\Delta A,X_{< \alpha}) \hookrightarrow Ps(\alpha^{op},\C)(\Delta A,X_{< \alpha})$$
is an equivalence for each $A$.  Since the first component is an isomorphism, this will be the case just when the second component is an equivalence.  Since it is always fully faithful, this is will be the case just when it is essentially surjective on objects and, by the preceding lemma, this holds since $X_{< \alpha}$ is smooth isofibrant.
\item If $f:X \rightsquigarrow Y$ is a pseudonatural equivalence, then by the lemma there is a 2-natural transformation $g:X \to Y$ and invertible modification $f \cong g$.  For $n \in \lambda$ we have $f_n \cong g_n$ since each $f_n$ is an equivalence so too is the naturally isomorphic $g_n$.  This proves the first part.  The second part from the first part using that pseudonatural equivalence is an equivalence relation.
\end{enumerate}
\end{proof}

\end{document}